\documentclass[a4paper,11pt]{amsart}


\usepackage[utf8]{inputenc}		
\usepackage[T1]{fontenc}
\usepackage[english]{babel}

\usepackage{amsfonts}			
\usepackage{amsmath}
\usepackage{amssymb}
\usepackage{amsthm}

\usepackage{graphicx}			
\usepackage{tikz}
\usetikzlibrary{cd}

\usepackage{hyperref}			
\hypersetup{colorlinks=false, urlcolor=black, linkcolor=black}
\usepackage{cleveref}

\usepackage{enumitem}			

\usepackage{color}				
\usepackage{float}

\usepackage{thmtools}
\usepackage{thm-restate}

\usepackage{mathrsfs}

\newcommand{\Z}{\mathbb{Z}}						
\newcommand{\R}{\mathbb{R}}						
\renewcommand{\S}{\mathbb{S}}					
\newcommand{\B}{\mathbb{B}}



\newcommand{\eps}{\varepsilon}					
\renewcommand{\phi}{\varphi}					

\newcommand{\dd}								
{\mathop{}\!\mathrm{d}}						
\newcommand{\ddn}[1]							
{\mathop{}\!\mathrm{d^{#1}}}

\newcommand{\abs}[1]							
{\left| #1 \right|}
\newcommand{\smallabs}[1]						
{\lvert #1 \rvert}	
\newcommand{\norm}[1]							
{\left\lVert #1 \right\rVert}	
\newcommand{\smallnorm}[1]						
{\lVert #1 \rVert}						
\newcommand{\ip}[2]								
{\left< #1 , #2 \right>}

\DeclareMathOperator{\intr}{int}				
\DeclareMathOperator{\vol}{vol}					
\DeclareMathOperator{\spt}{spt}					
\DeclareMathOperator{\diam}{diam}				
\DeclareMathOperator*{\esssup}{ess\,sup}		


\DeclareMathOperator{\dist}{dist}

\newcommand{\iprodl}{\mathbin{\llcorner}}




\newcommand{\derham}{\mathrm{dR}}						
\newcommand{\hodge}{\mathtt{\star}\hspace{1pt}}

\newcommand{\loc}{\mathrm{loc}}

\newcommand{\mass}{\mathbf{M}}

\newcommand{\cX}{\mathcal{X}}

\newcommand{\ff}{\mathcal{F}_{K,D}(M,\omega)} 
\newcommand{\weakto}{\rightharpoonup}

\def\Xint#1{\mathchoice
	{\XXint\displaystyle\textstyle{#1}}%
	{\XXint\textstyle\scriptstyle{#1}}%
	{\XXint\scriptstyle\scriptscriptstyle{#1}}%
	{\XXint\scriptscriptstyle\scriptscriptstyle{#1}}%
	\!\int}
\def\XXint#1#2#3{{\setbox0=\hbox{$#1{#2#3}{\int}$}
		\vcenter{\hbox{$#2#3$}}\kern-.5\wd0}}

\def\dashint{\Xint-}


\newtheorem{thm}{Theorem}[section]{\bf}{\it}
\newtheorem{lemma}[thm]{Lemma}
\newtheorem{prop}[thm]{Proposition}
\newtheorem{cor}[thm]{Corollary}

{\bf}{\it}

{\bf}{\it}

{\bf}{\it}

{\bf}{\it}

\theoremstyle{definition}

\theoremstyle{remark}
\newtheorem{rem}[thm]{Remark}

\numberwithin{equation}{section}

\begin{document}
	
	\title{Quasiregular values and cohomology}
	
	\author[S. Heikkil\"a]{Susanna Heikkil\"a}
	\address{Department of Mathematics and Statistics, 
			P.O. Box 35 (MaD), FI-40014 University of Jyväskylä, Finland.
	}
	\email{susanna.a.heikkila@jyu.fi}
	
	\author[I. Kangasniemi]{Ilmari Kangasniemi}
	\address{Department of Mathematics and Statistics, 
			P.O. Box 35 (MaD), FI-40014 University of Jyväskylä, Finland.}
	\email{ilmari.k.kangasniemi@jyu.fi}

	\subjclass[2020]{Primary 30C65; Secondary 32A30, 35R45, 53C65}
	\date{\today}
	\keywords{Quasiregular, Quasiregular values, Quasiregular curves, de Rham cohomology, cohomological obstructions}
	\thanks{I. Kangasniemi was partially supported by the National Science Foundation grant DMS-2247469. S. Heikkilä was partially supported by the Research Council of Finland projects \#360505 and \#332671.}
	
	\begin{abstract}
        We prove that the recently shown cohomological obstruction for quasiregular ellipticity has a generalization in the theory of quasiregular values. More specifically, if $M$ is a closed, connected, and oriented Riemannian $n$-manifold, and there exists a map $f \in C(\mathbb{R}^n, M) \cap W^{1,n}_\loc(\mathbb{R}^n, M)$ satisfying $\lvert Df(x) \rvert^n \le K J_f(x) + \operatorname{dist}^n(f(x), f(x_0)) \Sigma(x)$ a.e.\ in $\mathbb{R}^n$ with $K \ge 1$, $x_0 \in \mathbb{R}^n$, and $\Sigma \in L^1(\mathbb{R}^n) \cap L^{1+\varepsilon}_{\text{loc}}(\mathbb{R}^n)$ for some $\varepsilon > 0$, then the real singular cohomology ring $H^*(M; \mathbb{R})$ of $M$ embeds into the exterior algebra $\wedge^* \mathbb{R}^n$ in a graded manner. We also show a partial version of our result for $M$ with dimension greater than $n$, by using a class of maps that combines properties of quasiregular values and quasiregular curves.
	\end{abstract}
	
	\maketitle 
	
	\section{Introduction}
	
	Let $N$ and $M$ be oriented Riemannian $n$-manifolds without boundary, where we assume all Riemannian manifolds in this paper to be $C^\infty$-smooth unless otherwise stated. A continuous locally Sobolev map $f \in C(N, M) \cap W^{1,n}_\loc(N, M)$ is called \emph{$K$-quasiregular} for a given value $K \ge 1$ if it satisfies the distortion inequality
	\begin{equation}\label{eq:QR_def}
		\abs{Df(x)}^n \le K J_f(x)
	\end{equation}
	at a.e.\ $x \in N$ with respect to the Riemannian volume measure. Here, $\abs{Df(x)}$ is the operator norm of the derivative map $Df(x) \colon T_x N \to T_{f(x)} M$ with respect to the Riemannian metrics of $N$ and $M$, and $J_f$ is the Jacobian determinant of $f$, defined a.e.\ on $N$ by $f^* \vol_M = J_f \vol_N$. A map $f$ is called \emph{quasiregular} if it is $K$-quasiregular for some $K \ge 1$.
	
	The theory of quasiregular mappings forms a higher dimensional counterpart to the geometric theory of holomorphic maps of a single complex variable. Indeed, quasiregular mappings satisfy counterparts to many of the results of classical complex analysis, such as the Liouville theorem \cite{Reshetnyak_Liouville}, the open mapping theorem \cite{Reshetnyak_Theorem2}, and the Picard theorem \cite{Rickman_Picard}. For standard reference texts on quasiregular maps, see e.g.\ \cite{Rickman_book, Iwaniec-Martin_book, Reshetnyak-book}.  
	
	A closed (i.e.\ compact without boundary), connected, oriented Riemannian $n$-manifold $M$ is called \emph{quasiregularly elliptic} if there exists a non-constant quasiregular map $f \colon \R^n \to M$. The study of quasiregularly elliptic manifolds can be traced back to a question of Gromov \cite[p. 200]{Gromov_Nilpotent} and Rickman \cite{Rickman_QREllipt_question} on whether every simply connected $M$ is quasiregularly elliptic. This question was resolved by Prywes \cite{Prywes_Annals}, who showed that if $M$ is quasiregularly elliptic, then its real singular cohomology spaces satisfy the dimension constraint
	\begin{equation}\label{eq:cohom_bound}
		\dim H^k(M; \R) \le \binom{n}{k}.
	\end{equation}
	This bound was previously conjectured by Bonk and Heinonen \cite{Bonk-Heinonen_Acta}, who had proven a weaker version with the upper bound dependent on the distortion $K$ of the map $f \colon \R^n \to M$.
	
	Afterwards, Prywes' result was refined by the first named author and Pankka \cite{Heikkila-Pankka_Elliptic}, who showed that if $M$ is quasiregularly elliptic, then there exists a graded embedding of algebras $\iota \colon H^*(M; \R) \to \wedge^* \R^n$ which maps the cup product of $H^*(M; \R)$ to the wedge product of the standard exterior algebra $\wedge^* \R^n$. We note that \eqref{eq:cohom_bound} is an immediate corollary of this embedding result, as $\dim \wedge^k \R^n = \binom{n}{k}$. We also note that embedding results of similar spirit were proven by the second named author \cite[Theorem 1.6]{Kangasniemi_Conf-formal} for uniformly quasiregularly elliptic manifolds, and by Berdnikov, Guth, and Manin \cite[Theorem 2.3]{Berdnikov-Guth-Manin} for manifolds admitting a Lipschitz mapping of positive asymptotic degree.
    
    Together with constructions by Piergallini and Zuddas \cite{Piergallini-Zuddas_Ellipticity}, the embedding result of \cite{Heikkila-Pankka_Elliptic} led into a full topological characterization of quasiregularly elliptic simply connected 4-manifolds. This topological characterization was later extended by Manin and Prywes \cite[Corollary 3.6]{Manin-Prywes} to also cover 4-manifolds with non-trivial fundamental groups.
    
	In this article, we show that the embedding theorem of \cite{Heikkila-Pankka_Elliptic} can be generalized using the recently formulated theory of quasiregular values. Here, if $N$ and $M$ are oriented Riemannian $n$-manifolds without boundary, $y_0 \in M$, $K \ge 0$, and $\Sigma \in L^1_\loc(N)$, we say that a map $f \in W^{1,n}_\loc(N, M)$ has a \emph{$(K, \Sigma)$-quasiregular value at $y_0$} if it satisfies the generalized distortion estimate
	\begin{equation}\label{eq:QRval_def}
		\abs{Df(x)}^n \le K J_f(x) + \dist^n(f(x), y_0) \Sigma(x) \quad \text{for a.e.\ }x \in N.
	\end{equation}
	With this definition, our main theorem is as follows.
	
	\begin{thm}\label{thm:QRvalue_embedding}
		Let $n \ge 2$, and let $M$ be a closed, connected, oriented Riemannian $n$-manifold. Suppose that there exists a non-constant map $f \in C(\R^n, M) \cap W^{1,n}_\loc(\R^n, M)$ that has a $(K, \Sigma)$-quasiregular value in its image with $K \ge 0$ and $\Sigma \in L^1(\R^n) \cap L^{1+\eps}_\loc(\R^n)$, where $\eps>0$. Then there exists a graded embedding of algebras
		\[
			\iota \colon H^*(M; \R) \to \wedge^* \R^n
		\]
		which maps the cup product of $H^*(M; \R)$ to the wedge product of $\wedge^* \R^n$. Consequently, for every $k \in \{0, \dots, n\}$, we have
		\[
			\dim H^k(M; \R) \le \binom{n}{k}.
		\]
	\end{thm}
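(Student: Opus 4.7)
My strategy is to follow the proof of the quasiregular--elliptic embedding theorem of Heikkilä and Pankka \cite{Heikkila-Pankka_Elliptic}, replacing the pure distortion inequality by \eqref{eq:QRval_def} and treating the $\Sigma$-contribution as a controllable perturbation that becomes negligible in a blow-down limit at infinity.

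First, I would fix smooth representatives $\omega_1, \ldots, \omega_b$ for a graded basis of $H^*(M;\R)$ (harmonic for some reference metric on $M$) and pull them back via $f$. The continuity and $W^{1,n}_\loc$-regularity of $f$ guarantee that the pullbacks $f^*\omega_i$ are well defined and weakly closed, and the pointwise inequality $|f^*\omega_i(x)| \le \|\omega_i\|_{L^\infty(M)} |Df(x)|^{k_i}$ combined with \eqref{eq:QRval_def} gives
\[
|f^*\omega_i(x)|^{n/k_i} \le C\bigl(K J_f(x) + \dist^n(f(x),y_0)\,\Sigma(x)\bigr).
\]
Since $M$ is closed, $\dist(f(\cdot),y_0)$ is uniformly bounded, so after integration over any measurable set the $\Sigma$-term contributes at most $C\|\Sigma\|_{L^1(\R^n)}$, a finite constant independent of scale.

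Second, I would carry out a blow-down/averaging argument. Picking any $x_0 \in f^{-1}(y_0)$ and dilating $f^*\omega_i$ over balls $B(x_0,R)$ with $R \to \infty$, and extracting a suitably normalized weak-$L^{n/k_i}_\loc$ limit, one should obtain constant-coefficient forms on $\R^n$ that together define a graded linear map $\iota \colon H^*(M;\R) \to \wedge^*\R^n$. The $J_f$-contribution should survive the blow-down because $f$ is non-constant and has positive multiplicity near the quasiregular value $y_0$, while the $\Sigma$-contribution vanishes because its fixed $L^1$-mass is spread over balls of increasing volume. Since pullback of forms respects wedge products and weak $L^{n/k}$-limits behave well under wedging against other bounded limits, the resulting map $\iota$ automatically sends cup products to wedge products.

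The main obstacle I anticipate is the injectivity of $\iota$. In the pure quasiregular case, one combines higher integrability of the Jacobian with a Poincaré-type inequality to prevent the limit forms from collapsing, thereby forcing a non-trivial cohomological image. Here the analogous step must accommodate the $\Sigma$-error in the regularity estimates for $f^*\omega_i$; I expect this is precisely the role of the hypothesis $\Sigma \in L^{1+\eps}_\loc(\R^n)$, which should allow a Gehring-type self-improvement that recovers the reverse Hölder estimates required to run the Prywes--Heikkilä--Pankka machinery locally near $x_0$. Once $\iota$ is shown to be injective, the dimension bound $\dim H^k(M;\R) \le \binom{n}{k}$ follows from $\dim \wedge^k\R^n = \binom{n}{k}$.
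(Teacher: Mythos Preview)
Your overall direction---pull back harmonic representatives, rescale, extract weak limits, and use Poincar\'e duality for injectivity---is correct and matches the Heikkil\"a--Pankka scheme the paper follows. However, there is a genuine gap at the point you flag as ``the $J_f$-contribution should survive the blow-down because $f$ is non-constant and has positive multiplicity near $y_0$.'' This reasoning does not by itself give $\|Df\|_{L^n(\R^n)}=\infty$, and without that the normalizing constants $A_R$ stay bounded, the rescaled forms do not converge to anything useful, and the entire limit construction collapses. In the paper this step is the content of Proposition~\ref{prop:QRval_infinite_energy}: one invokes the single-value Reshetnyak theorem (Corollary~\ref{cor:reshetnyak_mflds}) to get discreteness of $f^{-1}\{y_0\}$ and $i(x_0,f)>0$, then tests the degree against bump forms $\eta_r\vol_M$ supported in shrinking balls about $y_0$. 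Since $[\eta_r\vol_M]\in K^n(M)$ when $M$ is not a rational cohomology sphere, a Stokes-type lemma forces $\int_{\R^n}f^*(\eta_r\vol_M)=0$ under the finite-energy hypothesis, which contradicts the positive degree contribution from $U_0$ after estimating the complementary piece by $r^n\!\int_{f^{-1}B(y_0,r)}\Sigma\to 0$. This is where $\Sigma\in L^{1+\eps}_\loc$ is actually used---it is the hypothesis needed for the Reshetnyak theorem---not in any reverse-H\"older step at the embedding stage as you suggest. Indeed, Remark~\ref{rem:polynomial_growth} explains that the Bonk--Heinonen polynomial-growth route via Gehring fails here without a priori control on the relative sizes of $|Df|^n$ and $\Sigma$.

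Two smaller points. First, centering all balls at a fixed $x_0$ is not how the paper proceeds: one uses Rickman's Hunting Lemma to find balls (with varying centers and radii) on which the measure $K\hodge f^*\vol_M+\Sigma$ is both large and doubling, which is what prevents the limit $n$-form from vanishing; a naive dilation about $x_0$ need not give this doubling control. Second, injectivity of $\iota$ is not obtained from reverse-H\"older estimates on individual forms but from Poincar\'e duality once $\iota([\vol_M])\neq 0$: that is Proposition~\ref{prop:technical_abstraction}~\ref{enum:cohom_map_into_ext_alg}.
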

	
	\subsection{Background on quasiregular values}
	
	The theory of quasiregular values was recently established by the second named author and Onninen in \cite{Kangasniemi-Onninen_1ptReshetnyak}. The origins of the theory can be traced back to a question of Astala, Iwaniec, and Martin \cite[Section 8.5]{Astala-Iwaniec-Martin_Book}, which, in current terminology, asked whether the constant function $f \equiv 0$ is the only map $f \in W^{1,n}_\loc(\R^n, \R^n)$ for which $\lim_{x \to \infty} f(x) = 0$ and $f$ has a $(K, \Sigma)$-quasiregular value at $0$ with $K \ge 1$ and $\Sigma \in L^{1+\eps}(\R^n) \cap L^{1-\eps}(\R^n)$, $\eps > 0$. A corresponding result in the planar case $n=2$ was used by Astala and P\"aiv\"arinta \cite[Prop.\ 3.3]{Astala-Paivarinta} in the solution of the planar Calder\'on problem. The higher-dimensional question was resolved in \cite{Kangasniemi-Onninen_Heterogeneous,Kangasniemi-Onninen_Heterogeneous-corrigendum}.
	
	The definition of quasiregular values can be compared and contrasted with the weaker condition
	\begin{equation}\label{eq:QR_with_Sigma}
		\abs{Df(x)}^n \le K J_f(x) + \Sigma(x) \qquad \text{for a.e.\ }x \in N,
	\end{equation}
	which can similarly be stated for mappings $f \in W^{1,n}_\loc(N, M)$ between oriented Riemannian $n$-manifolds. Condition \eqref{eq:QR_with_Sigma} is relatively far from the usual behavior of quasiregular maps, as it is satisfied by every Sobolev map $f \in W^{1,n}_\loc(N, M)$ with $K = 1$ and $\Sigma(x) = \abs{Df(x)}^n + \abs{J_f(x)}$. In contrast, the definition \eqref{eq:QRval_def} of a $(K, \Sigma)$-quasiregular value at $y_0$ allows for behavior similar to an arbitrary Sobolev map when $f(x)$ is far from $y_0$, but forces $f$ to behave in a more quasiregular manner as $f(x)$ approaches $y_0$.
	
	This behavior of mappings with $(K, \Sigma)$-quasiregular values leads to them satisfying single-value versions of the classical results of quasiregular maps at $y_0$, given sufficient integrability assumptions on $\Sigma$. These single-value results include a Liouville theorem \cite[Theorem 1.2]{Kangasniemi-Onninen_Heterogeneous}, a version of Reshetnyak's open mapping theorem \cite[Theorem 1.2]{Kangasniemi-Onninen_1ptReshetnyak}, and a version of Rickman's Picard theorem \cite[Theorem 1.2]{Kangasniemi-Onninen_Picard}. Moreover, a rescaling theorem for maps with quasiregular values was shown in \cite[Theorem 1.1]{Kangasniemi-Onninen_Rescaling}, which allows one to rescale such maps to $K$-quasiregular maps at a point $x_0 \in f^{-1}\{y_0\}$.
	
	We note that the statement of Theorem \ref{thm:QRvalue_embedding} is very close in spirit to the Liouville theorem \cite[Theorem 1.2]{Kangasniemi-Onninen_Heterogeneous} of quasiregular values. This theorem states that if $B \subset \R^n$ is bounded and $f \in C(\R^n, B) \cap W^{1,n}_\loc(\R^n, B)$ has a $(K, \Sigma)$-quasiregular value at $y_0 \in B$ with $\Sigma \in L^1(\R^n) \cap L^{1+\eps}_\loc(\R^n)$, $\eps > 0$, then either $y_0 \notin f(\R^n)$ or $f$ is identically $y_0$. Following some rewording, the statement of Theorem \ref{thm:QRvalue_embedding} is identical, but with the bounded set $B \subset \R^n$ replaced with a closed, connected, oriented Riemannian $n$-manifold $M$ that fails to have an embedding of algebras $H^*(M; \R) \hookrightarrow \wedge^* \R^n$.
	
	\subsection{Quasiregular curves}\enlargethispage{-2\baselineskip}
	
	Recently, the embedding theorem of \cite{Heikkila-Pankka_Elliptic} has also been generalized by the first named author in another direction, namely the theory of quasiregular curves; see \cite[Theorem 1.2]{Heikkila_Embedding-curves}. Quasiregular curves were introduced by Pankka \cite{Pankka_QRcurves} as a means to define quasiregular maps between spaces of different dimensions, generalizing e.g.\ the theories of holomorphic and pseudoholomorphic curves. For the definition of a quasiregular curve into an $m$-dimensional oriented Riemannian manifold $M$, one fixes a closed, non-vanishing differential $n$-form $\omega \in C^\infty(\wedge^n T^* M)$ on $M$ which essentially acts as a replacement for the volume form of an $n$-manifold. Then, given an oriented Riemannian $n$-manifold $N$, a map $F \in W^{1,n}_\loc(N, M)$ is called a \emph{$K$-quasiregular $\omega$-curve} if it satisfies
	\begin{equation}\label{eq:QRcurve_def}
		\abs{\omega_{F(x)}}_{\mass} \abs{DF(x)}^n \leq K \hodge (F^*\omega)_x \qquad \text{ for a.e.\ } x \in N.
	\end{equation}
	Here, $\hodge$ denotes the Hodge star operator, and the norm $\abs{\cdot}_{\mass}$ is the comass norm on $k$-covectors, as opposed to the usual Grassmannian norm $\abs{\cdot}$; see e.g.\ \cite[Section 1.8]{Federer_book}.
	
	In order to state the cohomological obstruction for quasiregular curves, we recall that for a smooth $m$-manifold $M$, a de Rham cohomology class $c \in H_\derham^n(M)$ belongs to the $n$:th layer $K^n(M)$ of the \emph{K\"unneth ideal} $K^*(M)$ of $M$ if $c = c_1 \wedge c_2 + \cdots + c_l \wedge c_{l+1}$ for some $c_i \in  H_\derham^{k_i}(M)$ and $c_{i+1} \in H_\derham^{n-k_i}(M)$ with $1 \le k_i \le n-1$ for $i=1,\ldots,l$. The main result of \cite{Heikkila_Embedding-curves} then states that if $M$ is a closed, connected, oriented Riemannian $m$-manifold, $\omega \in C^\infty(\wedge^n T^* M)$ is a closed non-vanishing $n$-form on $M$ for which the de Rham class $[\omega]$ is in $K^n(M)$, and $F \in W^{1,n}_\loc(\R^n, M)$ is a $K$-quasiregular $\omega$-curve with $\norm{DF}_{L^n(\R^n)} = \infty$, then there exists a graded homomorphism of algebras $\iota \colon H_\derham^*(M) \to \wedge^* \R^n$ for which $\iota([\omega]) \ne 0$. By a recent result of Ikonen \cite[Corollary 1.15]{Ikonen_QRCurves-removability}, the assumption $\norm{DF}_{L^n(\R^n)} = \infty$ can be eliminated. Note that in the case $m = n$, Poincar\'e duality automatically shows that a map $\iota$ as above is injective.
	
	In our case, the methods we employ in fact yield a combination of Theorem \ref{thm:QRvalue_embedding} and the main result of \cite{Heikkila_Embedding-curves}. For this, given two oriented Riemannian manifolds $N$, $M$ of dimensions $n$ and $m$, respectively, and a closed non-vanishing $n$-form $\omega \in C^\infty(\wedge^n T^* M)$ on $M$, we consider a combination of \eqref{eq:QR_with_Sigma} and \eqref{eq:QRcurve_def} given by
	\begin{equation}\label{eq:QR_curve_with_Sigma}
		\abs{\omega_{F(x)}}_{\mass} \abs{DF(x)}^n \leq K \hodge (F^*\omega)_x + \Sigma(x) \qquad \text{ for a.e.\ } x \in N,
	\end{equation}
	where $F \in W^{1,n}_\loc(N, M)$, $K \ge 0$, and $\Sigma \in L^1_\loc(N)$. The result we obtain for such maps is as follows.
	
	\begin{thm}\label{thm:QRcurve_with_sigma_embedding}
		Let $m \ge n \ge 2$, let $M$ be a closed, connected, oriented Riemannian $m$-manifold, let $\omega \in C^\infty(\wedge^n T^* M)$ be a closed non-vanishing $n$-form on $M$ for which the de Rham class $[\omega]$ is in the K\"unneth ideal $K^*(M)$, let $K \ge 0$, and let $\Sigma \in L^1(\R^n)$. If there exists a map $F \in C(\R^n, M) \cap W^{1,n}_\loc(\R^n, M)$ for which $\norm{DF}_{L^n(\R^n)} = \infty$ and $F$ satisfies \eqref{eq:QR_curve_with_Sigma} with $K$, $\Sigma$, and $\omega$, then there exists a graded homomorphism of algebras 
		\[
			\iota \colon H_\derham^*(M) \to \wedge^* \R^n
		\]
		for which $\iota([\omega]) \ne 0$.
	\end{thm}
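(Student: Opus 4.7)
The plan is to combine the graded-algebra-homomorphism construction developed by Heikkilä for quasiregular curves with the $\Sigma$-perturbation handling used in the proof of Theorem \ref{thm:QRvalue_embedding}. Using Hodge theory on the closed manifold $M$, fix a basis of harmonic representatives for each $H_\derham^k(M)$; these are smooth and bounded. Since $[\omega]$ lies in the Künneth ideal $K^*(M)$, I fix a decomposition $[\omega] = \sum_i [\alpha_i] \wedge [\beta_i]$ with $[\alpha_i] \in H_\derham^{k_i}(M)$, $[\beta_i] \in H_\derham^{n-k_i}(M)$, and $1 \le k_i \le n-1$, and also fix harmonic representatives of the factor classes.

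The central estimate begins with the observation that $\lvert \omega_{F(x)} \rvert_{\mass}$ is uniformly bounded away from zero on the compact manifold $M$, so \eqref{eq:QR_curve_with_Sigma} upgrades to a pointwise a.e.\ bound $\abs{DF(x)}^n \le C \hodge(F^*\omega)_x + C\Sigma(x)$. Integrating over a ball $B_r \subset \R^n$ gives
\[
\int_{B_r} \abs{DF}^n \le C \int_{B_r} F^*\omega + C\norm{\Sigma}_{L^1(\R^n)},
\]
so the $\Sigma$-term contributes only a uniformly bounded error. Since the hypothesis $\norm{DF}_{L^n(\R^n)} = \infty$ forces the total energy $E_r = \int_{B_r}\abs{DF}^n$ to diverge as $r \to \infty$, normalizing the pullback forms by an appropriate power of $E_r$ makes the $\Sigma$-error asymptotically negligible. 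This is the mechanism through which only a global $L^1$ hypothesis on $\Sigma$ is required, unlike the local $L^{1+\eps}$ regularity needed in Theorem \ref{thm:QRvalue_embedding}, which served there to invoke Reshetnyak-style openness results that are not part of the cohomological output here.

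From this point the argument follows the template of Heikkilä's curves paper. For each chosen harmonic form $\gamma$ of degree $k$, the normalized pullbacks $E_r^{-k/n} F^*\gamma$ are uniformly bounded in $L^{n/k}(B_r)$; Iwaniec--Scott--Stroffolini Hodge decomposition on balls together with Sobolev--Poincaré estimates control their primitives, and a diagonal extraction over an exhausting sequence of balls produces weak $L^{n/k}_\loc$ limits on $\R^n$. Closedness together with the polynomial growth of the primitives forces the limit forms to have constant coefficients, which defines the candidate graded map $\iota : H_\derham^*(M) \to \wedge^* \R^n$. Multiplicativity follows from weakly passing the identity $F^*(\alpha \wedge \beta) = F^*\alpha \wedge F^*\beta$ to the limit after pairing with the Hodge primitives, and the non-vanishing $\iota([\omega]) \ne 0$ follows because the energy comparison above becomes an equivalence of orders $E_r \sim \int_{B_r} F^*\omega$, forcing the limit $n$-form to be non-zero on $\R^n$.

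The main obstacle I expect is the careful bookkeeping needed to ensure that the exact-form corrections from the Hodge decomposition of $F^*\omega$ against the wedge sum $\sum_i F^*\alpha_i \wedge F^*\beta_i$ produce $L^1$ errors that remain $o(E_r)$ after normalization, even in the presence of the $\Sigma$ term. A secondary point is verifying that the different harmonic pullbacks can be normalized compatibly so that $\iota$ is genuinely a graded algebra homomorphism and not merely a graded linear map with $\iota([\omega]) \ne 0$; this requires coordinating the scaling exponents for factors of complementary degrees $k_i$ and $n-k_i$. Both points should be manageable by adapting Heikkilä's argument with the global $L^1$ bound on $\Sigma$ used in place of the strict vanishing of lower-order terms.
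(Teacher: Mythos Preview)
Your overall architecture is close to the paper's, but there is a genuine gap in the step where you conclude $\iota([\omega])\neq 0$. You argue that $E_r \sim \int_{B_r} F^*\omega$ (which is correct, once $E_r$ dominates the fixed $\|\Sigma\|_{L^1}$), and assert that this ``forces the limit $n$-form to be non-zero''. It does not. In degree $n$ you only have $L^1$ control on the normalized pullbacks, so the relevant convergence is merely vague; knowing that the total mass $E_r^{-1}\int_{B_r}F^*\omega$ stays bounded below does not prevent that mass from drifting to the boundary of the (rescaled) ball and disappearing in the vague limit. This is precisely the obstacle the paper's proof is built around: it uses Rickman's Hunting Lemma (Lemma~\ref{lem:Rickman-Hunting}) to choose a sequence of balls $\B^n(a_j,r_j)$ on which the measure $\mu(E)=\int_E (K\hodge F^*\omega+\Sigma)$ satisfies a \emph{doubling} condition $\mu(2\B)\le D\,\mu(\B)$, then rescales these balls to $\B^n_2$ and normalizes by $A_j=\int_{\B^n}(K\hodge F_j^*\omega+\Sigma_j)$. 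The doubling forces a definite fraction of the normalized mass to sit inside $\B^n\subset\B^n_2$, away from the boundary, so a fixed compactly supported test function detects it and the vague limit is nonzero (this is the final computation in the proof of Theorem~\ref{thm:local_version_of_QRcurve_with_sigma_embedding}). Your proposal contains no mechanism of this kind.

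A related issue is your description of the limit construction: ``diagonal extraction over an exhausting sequence of balls produces weak $L^{n/k}_{\loc}$ limits on $\R^n$'' and ``polynomial growth of the primitives forces the limit forms to have constant coefficients''. This is not how the paper (or Heikkil\"a's curves paper) proceeds; there one rescales each chosen ball to a fixed $\B^n_2$, extracts limits there, and obtains $\iota$ by pointwise evaluation of the weakly closed limit forms at a generic point (Proposition~\ref{prop:technical_abstraction}\ref{enum:cohom_map_into_ext_alg}). If you keep $F$ fixed on $\R^n$ and only normalize by $E_r^{-k/n}$, the forms simply tend to zero locally; if you rescale the domain, you are back to the paper's setup and must confront the doubling issue above. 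Either way, the polynomial-growth-to-constant-coefficients step is not available here because, unlike in Prywes's original argument for genuinely quasiregular maps, you do not have a local $L^{1+\eps}$ higher-integrability bound uniform along the sequence.
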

	
	One can also consider an analogous combination of \eqref{eq:QRval_def} and \eqref{eq:QRcurve_def} given by
	\begin{equation}\label{eq:QRcurveval_def}
		\abs{\omega_{F(x)}}_{\mass} \abs{DF(x)}^n \leq K \hodge (F^*\omega)_x + \dist^n(F(x), y_0)\Sigma(x) \text{ for a.e.\ } x \in N
	\end{equation}
	with $y_0 \in M$. As the manifold $M$ in Theorem \ref{thm:QRcurve_with_sigma_embedding} is bounded, the map $\dist(\cdot, y_0)$ is bounded for every $y_0 \in M$. Thus, a version of Theorem \ref{thm:QRcurve_with_sigma_embedding} for \eqref{eq:QRcurveval_def} follows immediately.
	
	\begin{cor}\label{cor:QRcurveval_embedding}
		Let $m \ge n \ge 2$, let $M$ be a closed, connected, oriented Riemannian $m$-manifold, let $\omega \in C^\infty(\wedge^n T^* M)$ be a closed non-vanishing $n$-form on $M$ for which the de Rham class $[\omega]$ is in the K\"unneth ideal $K^*(M)$, let $K \ge 0$, let $y_0 \in M$, and let $\Sigma \in L^1(\R^n)$. If there exists a map $F \in C(\R^n, M) \cap W^{1,n}_\loc(\R^n, M)$ for which $\norm{DF}_{L^n(\R^n)} = \infty$ and $F$ satisfies \eqref{eq:QRcurveval_def} with $K$, $\Sigma$, and $\omega$, then there exists a graded homomorphism of algebras 
		\[
		\iota \colon H_\derham^*(M) \to \wedge^* \R^n
		\]
		for which $\iota([\omega]) \ne 0$.
	\end{cor}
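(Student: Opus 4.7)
The plan is to deduce this corollary directly from Theorem \ref{thm:QRcurve_with_sigma_embedding}, using the compactness of $M$ to absorb the distance factor into a new integrable majorant. This is essentially the reduction indicated in the paragraph preceding the corollary, so I would present it as a short argument rather than a full proof from scratch.

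First, since $M$ is closed and connected and the Riemannian distance $\dist$ on $M$ is continuous, the diameter $D := \diam(M) < \infty$. Consequently, for any fixed $y_0 \in M$, the function $x \mapsto \dist(F(x), y_0)$ is bounded above by $D$ pointwise on $\R^n$, regardless of how $F$ behaves. Substituting $\dist^n(F(x), y_0) \le D^n$ into \eqref{eq:QRcurveval_def} yields
\[
	\abs{\omega_{F(x)}}_{\mass} \abs{DF(x)}^n \le K \hodge (F^*\omega)_x + D^n \Sigma(x) \quad \text{for a.e.\ } x \in \R^n.
\]
Setting $\widetilde{\Sigma} := D^n \Sigma$, we have $\widetilde{\Sigma} \in L^1(\R^n)$ since $\Sigma \in L^1(\R^n)$ and $D^n$ is a finite constant, and $F$ satisfies the inequality \eqref{eq:QR_curve_with_Sigma} with the data $K$, $\widetilde{\Sigma}$, and $\omega$.

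Next, I would verify that the remaining hypotheses of Theorem \ref{thm:QRcurve_with_sigma_embedding} are met. The manifold $M$ is closed, connected, and oriented of dimension $m \ge n \ge 2$, the form $\omega$ is a closed non-vanishing $n$-form with $[\omega] \in K^n(M)$, and $F \in C(\R^n, M) \cap W^{1,n}_\loc(\R^n, M)$ satisfies $\norm{DF}_{L^n(\R^n)} = \infty$ by hypothesis. Therefore Theorem \ref{thm:QRcurve_with_sigma_embedding} applies and produces a graded algebra homomorphism $\iota \colon H_\derham^*(M) \to \wedge^* \R^n$ with $\iota([\omega]) \ne 0$, which is precisely the desired conclusion.

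There is no real obstacle in this argument: the only substantive observation is the boundedness of the distance on the compact manifold $M$, which lets the $(K, \Sigma)$-quasiregular-value-type inequality \eqref{eq:QRcurveval_def} collapse into the simpler inequality \eqref{eq:QR_curve_with_Sigma} without any loss of integrability on the error term. All the genuine analytic and cohomological content is in Theorem \ref{thm:QRcurve_with_sigma_embedding}, which is invoked as a black box. Accordingly, the corollary is a one-line reduction in the spirit of the remark the authors make just before stating it.
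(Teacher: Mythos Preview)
Your argument is correct and matches the paper's own reasoning exactly: the authors state just before the corollary that since $M$ is compact, $\dist(\cdot,y_0)$ is bounded, so \eqref{eq:QRcurveval_def} implies \eqref{eq:QR_curve_with_Sigma} with $\Sigma$ replaced by a constant multiple, and Theorem~\ref{thm:QRcurve_with_sigma_embedding} applies. The only cosmetic point is that $D$ is already used in the paper for the doubling constant in $\ff$, so you might prefer another letter for $\diam(M)$.
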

	
	\subsection{Methods used in the proofs}
	
	The proof of Theorem \ref{thm:QRcurve_with_sigma_embedding} and Corollary \ref{cor:QRcurveval_embedding} follows the same strategy that has been developed in \cite{Heikkila-Pankka_Elliptic} and \cite{Heikkila_Embedding-curves}, with some adjustments required by the introduction of the $\Sigma$-term. In fact, a large portion of this proof only uses properties of general $L^p$-bounded pull-back operators on differential forms, and hence does not directly invoke \eqref{eq:QR_curve_with_Sigma} or quasiregularity in general. We have abstracted this part of the proof into a separate technical result, Proposition \ref{prop:technical_abstraction}. Our statement of this technical abstraction is sufficiently general that it also yields the corresponding embedding theorem for Lipschitz maps of positive asymptotic degree as a short corollary, as we outline in Section \ref{subsect:abstraction}; see also the comparison of these two embedding results by Manin and Prywes in \cite[Section 2]{Manin-Prywes}.
	
	Besides Corollary \ref{cor:QRcurveval_embedding}, the second component of the proof of Theorem \ref{thm:QRvalue_embedding} is the following result.
	
	\begin{prop}\label{prop:QRval_infinite_energy}
		Let $M$ be a closed, connected, oriented Riemannian $n$-manifold, and let $f \in C(\R^n, M) \cap W^{1,n}_\loc(\R^n, M)$. Suppose that $M$ is not a rational cohomology sphere, and that $f$ has a $(K, \Sigma)$-quasiregular value at $y_0 \in M$, where $K \ge 1$ and $\Sigma \in L^1(\R^n) \cap L^{1+\eps}_\loc(\R^n)$ with $\eps >0$. If $\norm{Df}_{L^n(\R^n)} < \infty$, then $y_0 \notin f(\R^n)$.
	\end{prop}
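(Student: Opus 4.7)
The plan is to argue by contradiction: assume $y_0 = f(x_0) \in f(\R^n)$ and derive a contradiction from the hypothesis that $M$ is not a rational cohomology sphere.

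\textbf{Cohomological vanishing.} Since $M$ is closed, connected, oriented, and not a rational cohomology sphere, Poincaré duality forces the de Rham fundamental class $[\vol_M] \in H^n_\derham(M)$ to lie in the Künneth ideal $K^n(M)$, so at the level of smooth forms one can write $\vol_M = \sum_i \alpha_i \wedge \beta_i + d\gamma$ with $\alpha_i, \beta_i$ closed of complementary intermediate degrees $1 \le k_i \le n-1$, and $\gamma$ a smooth $(n-1)$-form on $M$. The hypothesis $\norm{Df}_{L^n(\R^n)} < \infty$ together with the uniform boundedness of smooth forms on compact $M$ places $f^*\alpha_i \in L^{n/k_i}(\R^n)$, $f^*\beta_i \in L^{n/(n-k_i)}(\R^n)$, and $f^*\gamma \in L^{n/(n-1)}(\R^n)$. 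An $L^p$-Poincaré lemma on $\R^n$ for closed forms of positive degree gives $u_i$ with $f^*\alpha_i = du_i$, and a Stokes-at-infinity cutoff argument using the integrability of $u_i \wedge f^*\beta_i$ and $f^*\gamma$ in $L^{n/(n-1)}$ yields $\int_{\R^n} f^*(\alpha_i \wedge \beta_i) = \int_{\R^n} df^*\gamma = 0$. Summing, $\int_{\R^n} f^*\vol_M = \int_{\R^n} J_f = 0$, and since $H^n_\derham(M) \cong \R$, one concludes $\int_{\R^n} f^*\omega = 0$ for every closed $n$-form $\omega$ on $M$.

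\textbf{Localization near $y_0$.} Applying the rescaling theorem \cite[Theorem 1.1]{Kangasniemi-Onninen_Rescaling} in a chart at $y_0$ together with the Reshetnyak-type open mapping theorem \cite[Theorem 1.2]{Kangasniemi-Onninen_1ptReshetnyak}, the map $f$ is locally open at every preimage of $y_0$ with a positive integer topological index. The finite $n$-energy, combined with the area formula bound $\int_M N(f, y)\, dy \le \norm{Df}_{L^n(\R^n)}^n / n^{n/2}$ for the unsigned multiplicity $N(f, y) = \# f^{-1}(y)$, can then be used to show that $f^{-1}(\{y_0\})$ is finite and that $y_0$ is not a cluster value of $f$ at infinity. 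The upshot is a disjoint union $U = \bigcup_x U_x$ of Jordan neighborhoods of the preimages and a $\delta > 0$ with $\dist(f(z), y_0) \ge \delta$ for all $z \in \R^n \setminus U$, such that each $f|_{U_x}$ is a branched cover of degree $i(f, x) \ge 1$ onto a neighborhood of $y_0$.

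\textbf{Contradiction.} For a smooth $n$-form $\omega_r$ on $M$ supported in the ball $B(y_0, r)$ with $\int_M \omega_r = 1$ and $r < \delta$, the pullback $f^*\omega_r$ is supported entirely in $U$, since $\dist(f(z), y_0) \ge \delta > r$ on the complement. Degree theory then gives $\int_{\R^n} f^*\omega_r = \sum_{x \in f^{-1}(y_0)} i(f, x) \ge i(f, x_0) \ge 1$, contradicting the cohomological vanishing $\int_{\R^n} f^*\omega_r = 0$. The main technical obstacle is this localization step: to verify finiteness of $f^{-1}(\{y_0\})$ and absence of $y_0$ as a cluster value at infinity, one has to quantitatively combine the finite $n$-energy with the pointwise lower bound $J_f \ge -\dist^n(f, y_0)\Sigma/K$ coming from the quasiregular value inequality, together with the integrability $\Sigma \in L^1(\R^n) \cap L^{1+\eps}_\loc(\R^n)$, to rule out the accumulation of positively indexed sheets of $f$ over $y_0$.
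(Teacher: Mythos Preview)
Your cohomological vanishing step is correct and matches the paper's Lemma~4.1 (pullback Stokes for closed $n$-forms in $K^n(M)$), which is the essential use of the hypothesis that $M$ is not a rational cohomology sphere.

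The genuine gap is in your localization step. You need that $f^{-1}\{y_0\}$ is \emph{finite} and that $y_0$ is not a cluster value of $f$ at infinity, so that $f^*\omega_r$ is supported entirely in your domain $U$. But the Reshetnyak-type theorem only gives \emph{discreteness}, and your area-formula argument only yields $N(f,\cdot)\in L^1(M)$, which says nothing about the specific value $y_0$. Likewise, finite $n$-energy does not prevent $f(x_j)\to y_0$ along a sequence $x_j\to\infty$. You acknowledge this as ``the main technical obstacle'' and gesture at combining the lower bound $J_f \ge -\dist^n(f,y_0)\Sigma/K$ with integrability of $\Sigma$, but there is no actual argument here, and it is not clear one can be made along these lines.

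The paper's proof avoids this localization entirely, and this is precisely its key idea. One fixes a \emph{single} preimage $x_0$ with a small isolating neighborhood $U_0$. Degree theory gives $\int_{U_0} f^*(\eta_r \vol_M) \gtrsim r^n\, i(x_0,f) > 0$ for bump forms $\eta_r$ supported in $B_M(y_0,r)$. Since the global integral vanishes, the contribution from $V_r := f^{-1}B_M(y_0,r)\setminus U_0$ must be $\lesssim -r^n$. But on $V_r$ one has $\dist(f,y_0) < r$, so the quasiregular value inequality forces $J_f^- \le K^{-1} r^n \Sigma$, whence $\int_{V_r} J_f^- \le K^{-1} r^n \int_{f^{-1}B_M(y_0,r)} \Sigma$. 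Dividing by $r^n$ gives a uniform positive lower bound on $\int_{f^{-1}B_M(y_0,r)} \Sigma$ independent of $r$; yet as $r\to 0$ these sets shrink to the discrete (hence null) set $f^{-1}\{y_0\}$, so the integral tends to zero by dominated convergence. This contradiction replaces your hard global localization with a soft limiting argument, and is the step you are missing.
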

	
	Proposition \ref{prop:QRval_infinite_energy} ensures that in all non-trivial cases of Theorem \ref{thm:QRvalue_embedding}, the mapping $f$ has $\norm{Df}_{L^n(\R^n)} = \infty$, and Corollary \ref{cor:QRcurveval_embedding} is hence applicable. For quasiregular maps and quasiregular curves, this step is achieved with \cite[Theorem 1.11]{Bonk-Heinonen_Acta} and \cite[Corollary 1.15]{Ikonen_QRCurves-removability}, respectively. However, our proof of Proposition \ref{prop:QRval_infinite_energy} differs significantly from the proofs of these prior results; see e.g.\ Remark \ref{rem:polynomial_growth} for some of the complications involved in attempting to apply prior methods. The proof we use relies heavily on the fact that the theory of quasiregular values has a version of Reshetynak's theorem. Due to this reliance on a Reshetnyak-type theorem, our approach appears limited to the case where $M$ is $n$-dimensional; see the discussion in \cite[Remark 1.11]{Pankka_QRcurves}.

    \subsection{Assumptions of continuity in Theorems \ref{thm:QRvalue_embedding} and \ref{thm:QRcurve_with_sigma_embedding}}

    We recall that in many cases, every map $f \in W^{1,n}_\loc(N, M)$ satisfying \eqref{eq:QR_def} has a continuous Sobolev representative, and thus the assumption of continuity in the definition of quasiregular maps only amounts to selecting the correct representative. For a version of this result with closed manifold targets, see the work of Goldstein, Haj{\l}asz, and Pakzad \cite{Goldstein-Hajlasz-Pakzad}.
    
    Similarly, if $\Sigma$ is locally $L^{1+\eps}$-integrable, it is often guaranteed that a mapping satisfying \eqref{eq:QRval_def} or \eqref{eq:QR_with_Sigma} has a continuous representative, due to a higher integrability argument for $\abs{Df}$ based on Gehring's lemma; see e.g.\ \cite[Section 2.1]{Dolezalova-Kangasniemi-Onninen_MGFD-cont} for the argument when the target is $\R^n$. In this article, we also show a version of this higher integrability result when the target of $f$ is a closed manifold $M$ that is not a rational cohomology sphere; thus, in all non-trivial cases of our main results, a continuous representative exists automatically. The version of this result for solutions of \eqref{eq:QR_curve_with_Sigma} or \eqref{eq:QRcurveval_def} is as follows.
	
	\begin{prop}\label{prop:higher_int_general}
		Let $m \ge n \ge 2$, let $M$ be a closed, connected, oriented Riemannian $m$-manifold, let $\Omega \subset \R^n$ be open, and let $F \in W^{1,n}(\Omega, M)$ satisfy \eqref{eq:QR_curve_with_Sigma} or \eqref{eq:QRcurveval_def}, where $K \ge 0$, $\omega \in C^\infty(\wedge^n T^* M)$ is closed and non-vanishing with $[\omega] \in K^*(M)$, and $\Sigma \in L^{1+\eps}_\loc(\Omega)$ for some $\eps > 0$. Then $F \in W^{1, n+\eps'}_\loc(\Omega, M)$ for some $\eps' > 0$, and consequently, $F$ has a continuous Sobolev representative. 
	\end{prop}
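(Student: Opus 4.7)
The plan is to derive a reverse-H\"older Caccioppoli inequality for $\smallabs{DF}^n$ from the distortion estimate, then self-improve integrability via Gehring's lemma, and conclude continuity from Morrey's embedding. I first reduce both cases to a single inequality. Since $M$ is compact and $\omega$ is non-vanishing, the comass $\smallabs{\omega_y}_{\mass}$ is bounded below by some $c_0 > 0$ on $M$, and $\dist(y,y_0) \le \diam(M)$ for every $y \in M$. Consequently, both \eqref{eq:QR_curve_with_Sigma} and \eqref{eq:QRcurveval_def} reduce to
\[
\smallabs{DF(x)}^n \le K' \hodge(F^*\omega)_x + \Sigma'(x) \qquad \text{a.e.\ in }\Omega,
\]
with $K' = K/c_0$ and $\Sigma' \in L^{1+\eps}_\loc(\Omega)$ a constant multiple of $\Sigma$.

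The core step is a Caccioppoli-type reverse H\"older inequality based on the K\"unneth ideal decomposition of $[\omega]$. Using $[\omega] \in K^n(M)$ together with Hodge theory on $M$, I write $\omega = \sum_{i=1}^{\ell} \alpha_i \wedge \beta_i + d\eta$ with $\alpha_i$, $\beta_i$ smooth closed forms on $M$ of degrees $k_i$ and $n-k_i$ respectively with $1 \le k_i \le n-1$, and $\eta$ smooth. For a ball $B$ of radius $r$ with $2B \Subset \Omega$, the weakly closed pullbacks $F^*\alpha_i$ admit primitives $\gamma_i$ on the contractible set $2B$ with $d\gamma_i = F^*\alpha_i$ and with controlled Sobolev--Poincar\'e norms. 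Since
\[
F^*\omega \;=\; d\Bigl(\sum_i \gamma_i \wedge F^*\beta_i + F^*\eta\Bigr) \quad \text{on } 2B,
\]
testing against a cutoff $\phi \in C_c^\infty(2B)$ with $\phi \equiv 1$ on $B$ and $\smallabs{\nabla \phi} \le C/r$, integrating by parts, and applying H\"older's inequality together with the pointwise bounds $\smallabs{F^*\alpha_i} \le C\smallabs{DF}^{k_i}$, $\smallabs{F^*\beta_i} \le C\smallabs{DF}^{n-k_i}$, $\smallabs{F^*\eta} \le C\smallabs{DF}^{n-1}$, would produce an estimate of the form
\[
\dashint_B \smallabs{DF}^n \;\le\; C \Bigl( \dashint_{2B} \smallabs{DF}^{sn} \Bigr)^{1/s} + C \dashint_{2B} \Sigma'
\]
for some $s \in (0,1)$ independent of $B$.

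Once this reverse H\"older inequality is in hand, Gehring's self-improvement lemma in a form allowing an $L^{1+\eps}_\loc$ right-hand side majorant (as in Giaquinta--Modica) yields $\smallabs{DF}^n \in L^{1+\delta}_\loc(\Omega)$ for some $\delta > 0$, hence $F \in W^{1,n+\eps'}_\loc(\Omega, M)$ with $\eps' = n\delta$, and Morrey's embedding then supplies a locally H\"older continuous Sobolev representative. The delicate step is the Caccioppoli estimate: the H\"older and Sobolev--Poincar\'e exponents must be calibrated simultaneously for each $i$ so that every factor $\smallabs{DF}$ appearing on the right-hand side does so with an exponent strictly below $n$. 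This sub-criticality is available precisely because the K\"unneth ideal condition forces both $k_i$ and $n-k_i$ to lie in $\{1,\dots,n-1\}$; without the wedge decomposition, any integration by parts on $F^*\omega$ would return $\smallabs{DF}^n$ at the critical exponent $n$ and block Gehring's self-improvement.
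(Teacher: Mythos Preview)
Your overall strategy coincides with the paper's: reduce \eqref{eq:QRcurveval_def} to \eqref{eq:QR_curve_with_Sigma} via compactness of $M$, derive a reverse H\"older inequality for $\smallabs{DF}^n$ from the K\"unneth decomposition by taking Poincar\'e-type primitives of the closed pullbacks $F^*\alpha_i$ on $2B$, then apply Gehring's lemma and Morrey's embedding. The paper carries this out with the explicit subcritical exponent $sn = n^2/(n+1)$ (Lemma~\ref{lem:weak_reverse_hölder}).

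There is, however, a gap in your treatment of the exact remainder $d\eta$. After integrating $\phi\, dF^*\eta$ by parts, the bound $\smallabs{F^*\eta}\le C\smallabs{DF}^{n-1}$ leaves a contribution $\frac{C}{r}\dashint_{2B}\smallabs{DF}^{n-1}$ in the averaged inequality. This term is \emph{not} of the form $\bigl(\dashint_{2B}\smallabs{DF}^{sn}\bigr)^{1/s}$: the stray factor $1/r$ spoils scale-invariance, and forcing it into the required shape via Young's inequality produces an additive $C_\eps r^{-n}$, which blows up as the cubes shrink and defeats Gehring's lemma. Replacing $F^*\eta$ by the Poincar\'e-homotopy primitive $T(F^*d\eta)$ does no better: since $\smallabs{F^*d\eta}\le C\smallabs{DF}^n$, it only returns the critical term $\int_{2B}\smallabs{DF}^n$ with an uncontrolled coefficient $C\norm{d\eta}_{L^\infty(M)}$. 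Your closing paragraph correctly identifies subcriticality as the crux, but the $d\eta$ term is precisely where it fails. The paper resolves this by eliminating $d\eta$ beforehand (Lemma~\ref{lem:kunneth}): on a closed oriented Riemannian manifold, every exact $n$-form is itself a finite sum of wedge products of closed forms of positive degree (via \cite[Proposition~2.8]{Hajlasz-Iwaniec-Maly-Onninen}), so the decomposition can be taken with no exact remainder, after which your sketch goes through verbatim.
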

	
	The simpler version of the statement for $m = n$ is as follows.
	
	\begin{cor}\label{cor:higher_int_same_dimension}
		Let $n \ge 2$, let $M$ be a closed, connected, oriented Riemannian $n$-manifold, let $\Omega \subset \R^n$ be open, and let $f \in W^{1,n}(\Omega, M)$ satisfy \eqref{eq:QR_with_Sigma} or \eqref{eq:QRval_def}, where $K \ge 0$ and $\Sigma \in L^{1+\eps}_\loc(\Omega)$ for some $\eps > 0$. If $M$ is not a rational cohomology sphere, then $f \in W^{1, n+\eps'}_\loc(\Omega, M)$ for some $\eps' > 0$, and consequently, $f$ has a continuous Sobolev representative. 
	\end{cor}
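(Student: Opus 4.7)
The plan is to reduce Corollary \ref{cor:higher_int_same_dimension} directly to Proposition \ref{prop:higher_int_general} by taking $\omega$ to be the Riemannian volume form $\vol_M$. Since $m = n$ in this setting, the form $\vol_M$ is a smooth top-dimensional form on $M$, so $\dd \vol_M = 0$ automatically, and $\vol_M$ is nowhere-vanishing by construction. Furthermore, for $\omega = \vol_M$ the comass norm satisfies $\abs{\vol_M}_{\mass} \equiv 1$ and $\hodge(f^* \vol_M) = J_f$, so the curve-type distortion inequalities \eqref{eq:QR_curve_with_Sigma} and \eqref{eq:QRcurveval_def} reduce exactly to the assumed inequalities \eqref{eq:QR_with_Sigma} and \eqref{eq:QRval_def}. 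In other words, the hypothesis on $f$ in the corollary is literally a special case of the hypothesis on $F$ in the proposition once $\omega$ is chosen this way.

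The only remaining point is to verify that $[\vol_M] \in K^n(M)$ whenever $M$ is not a rational cohomology sphere. Here I would invoke Poincaré duality: since $M$ is closed, connected, oriented, and not a rational cohomology sphere, there exists some $k \in \{1, \dots, n-1\}$ with $H_\derham^k(M) \ne 0$. The pairing
\[
H_\derham^k(M) \times H_\derham^{n-k}(M) \to H_\derham^n(M) \cong \R
\]
induced by the wedge product is non-degenerate, so I can pick $\alpha \in H_\derham^k(M)$ and $\beta \in H_\derham^{n-k}(M)$ with $\alpha \wedge \beta \ne 0$; rescaling $\beta$ if necessary, $\alpha \wedge \beta = [\vol_M]$. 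By definition this places $[\vol_M]$ in $K^n(M)$.

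With both hypotheses of Proposition \ref{prop:higher_int_general} now verified, the conclusions $f \in W^{1, n+\eps'}_\loc(\Omega, M)$ and the existence of a continuous Sobolev representative follow immediately. There is no genuine obstacle in the argument — the entire content of the corollary is the observation that, in the equidimensional case, the K\"unneth-ideal condition on the volume form is exactly equivalent to $M$ not being a rational cohomology sphere, so the corollary is really just a convenient restatement of the proposition in the most classical setting.
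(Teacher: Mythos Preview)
Your proposal is correct and matches the paper's implicit argument: the corollary is stated without a separate proof precisely because it is the special case $m=n$, $\omega=\vol_M$ of Proposition~\ref{prop:higher_int_general}, and the paper elsewhere (e.g.\ the proof of Proposition~\ref{prop:QRval_infinite_energy} and Remark~\ref{rem:polynomial_growth}) invokes exactly the same Poincar\'e-duality observation to place $[\vol_M]$ in $K^n(M)$ when $M$ is not a rational cohomology sphere.
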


    \subsection{The organization of this paper}
    In Section \ref{sect:prelims}, we recall the necessary preliminaries for the proofs of our main results. Some simple generalizations of known results are also shown here, such as a Riemannian manifold version of the Reshetnyak's theorem for quasiregular values. In Section \ref{sect:proof_of_cohom_embedding}, we provide the parts of the proof of Theorem \ref{thm:QRcurve_with_sigma_embedding} which are specific to \eqref{eq:QR_curve_with_Sigma}. In Section \ref{sect:proof_of_infinite_energy}, we prove Proposition \ref{prop:QRval_infinite_energy}. In Section \ref{sect:proof_of_continuity}, we prove Proposition \ref{prop:higher_int_general}. Finally, in Section \ref{sect:abstraction_proof}, we prove Proposition \ref{prop:technical_abstraction}, the technical result which abstracts the more general parts of the argument in \cite{Heikkila-Pankka_Elliptic} and \cite{Heikkila_Embedding-curves}.
	
	\subsection*{Acknowledgments}
	
	The authors thank Pekka Pankka for numerous discussions on the topics of the article. I.K.\ in particular thanks him for the invitation to visit University of Helsinki during Summer 2024, as this visit was highly valuable for the completion of this research project. The authors also thank Kai Rajala for several useful remarks on the presentation of this article, as well as Toni Ikonen for a helpful observation at the right time.
	
	\section{Preliminaries} \label{sect:prelims}
	
	We first specify some terminology. We say that a manifold $M$ is \emph{open} if it contains no boundary points. Thus, under our terminology, for instance $\R^n$, $\S^n$, and the open Euclidean unit ball $\B^n$ are open manifolds, but the closed Euclidean unit ball $\overline{\B^n}$ is not. We say that a manifold is \emph{closed} if it is compact and contains no boundary points. 
    
    If $p \in [1, \infty]$, we use $p^* = p/(p-1)$ to denote the H\"older conjugate of $p$. Moreover, if $r > 0$, we use $\B^n_r$ to the open Euclidean ball of radius $r$ centered at the origin.
	
	\subsection{Sobolev maps and Sobolev differential forms on manifolds}

	Let let $N$ be an oriented, Riemannian $n$-manifold, and let $M$ be an oriented, Riemannian $m$-manifold. Recall that for $k \in \{0, \dots, m \}$, a \emph{differential $k$-form} on $M$ is a section $\omega \colon M \to \wedge^k T^* M$ of the $k$:th exterior power of the cotangent bundle of $M$. We denote the value of a $k$-form $\omega$ at $x \in M$ by $\omega_x \in \wedge^k T^*_x M$; in particular, $\omega_x$ is an alternating multilinear map $(T_x M)^k \to \R$. 
	
	We use $C^\infty(\wedge^k T^* M)$ to denote the space of smooth differential $k$-forms on $M$, with $C^\infty_0(\wedge^k T^* M)$ used for smooth $k$-forms with compact support. We denote the exterior derivative by $d \colon C^\infty(\wedge^k T^* M) \to C^\infty(\wedge^{k+1} T^* M)$, and the wedge product of differential forms by $\wedge$. We also denote the pull-back of a differential $k$-form $\omega$ on $M$ by a smooth map $f \in C^\infty(N, M)$ by $f^*\omega$; i.e., $f^* \omega$ is a $k$-form on $N$ with 
	\begin{equation}\label{eq:k-form_pullback}
		f^*\omega_x(v_1, \dots, v_k) = \omega_{f(x)}(Df(x)v_1, \dots, Df(x) v_k)
	\end{equation}
	for all $x \in N$ and $v_i \in T_x N$.
	
	Note that due to being oriented and Riemannian, the manifolds $N$ and $M$ have canonical volume forms $\vol_N \in C^\infty(\wedge^n T^* N)$ and $\vol_M \in C^\infty(\wedge^m T^*M)$, which induce a Lebesgue measure on each manifold. We say that a differential $k$-form $\omega$ on $M$ is \emph{(Lebesgue) measurable} if for every diffeomorphic chart $\phi \colon U \to M$, $U \subset \R^m$, we have that $\phi^*\omega \colon U \to \wedge^k (\R^m)^*$ has (Lebesgue) measurable coefficient functions. For every $x \in M$, the Riemannian metric on $M$ induces a Grassmann inner product $\ip{\cdot}{\cdot}$ on every space $\wedge^k T^*_x M$, and a corresponding norm $\abs{\cdot}$ on $\wedge^k T^*_x M$. We obtain that for all measurable $k$-forms $\omega, \omega' \colon M \to \wedge^k T^*M$, the functions $\ip{\omega}{\omega'} \colon M \to \R$ and $\abs{\omega} \colon M \to [0, \infty)$ are measurable.
	
	We then let $p \in [1, \infty]$, and define the space $L^p(\wedge^k T^* M)$ of \emph{$L^p$-integrable differential $k$-forms} as the space of all measurable $k$-forms $\omega$ on $M$ with $\abs{\omega} \in L^p(M)$. This is a complete normed space when equipped with the norm $\norm{\omega}_{L^p(M)} := \norm{\abs{\omega}}_{L^p(M)}$, after applying the usual equivalence relation, where $\omega = \omega'$ if $\omega_x = \omega'_x$ outside a null-set of $x \in M$. We also define $L^p_\loc(\wedge^k T^* M)$ as the space of all measurable $k$-forms $\omega$ on $M$ with $\abs{\omega} \in L^p_\loc(M)$.
	
	If $\omega \in L^1_\loc(\wedge^k T^* M)$, then we say that a $(k+1)$-form $d\omega \in L^1_\loc(\wedge^{k+1} T^* M)$ is a \emph{weak exterior derivative} of $\omega$ if
	\[
		\int_M d\omega \wedge \eta = (-1)^{k+1} \int_M \omega \wedge d\eta
	\]
	for all test forms $\eta \in C^{\infty}_0(\wedge^{n-k-1} T^* M)$. We note that weak exterior derivatives are unique up to a null-set. We use $W^{d,p}(\wedge^k T^* M)$ to denote the space of all $\omega \in L^p(\wedge^k T^* M)$ with a weak exterior derivative $d\omega \in L^p(\wedge^{k+1} T^* M)$, and $W^{d,p}_0(\wedge^k T^* M)$ to denote the space of compactly supported elements of $W^{d,p}(\wedge^k T^* M)$. We also use $W^{d,p}_\loc(\wedge^k T^* M)$ to denote the local counterpart to this space: i.e., $\omega \in W^{d,p}_\loc(\wedge^k T^* M)$ if $\omega \in L^p_\loc(\wedge^k T^* M)$ and $\omega$ has a weak exterior derivative $d\omega \in L^p_\loc(\wedge^{k+1} T^* M)$.
	
	If $l \in \Z_{> 0}$, we define that a measurable map $f \colon N \to \R^l$ is in the local Sobolev space $W^{1,p}_\loc(N, \R^l)$ if its every coordinate function $f_i$ is in $W^{d,p}_\loc(\wedge^0 T^* N)$. We then note that there exists a Nash embedding of $M$ into a Euclidean space, i.e.\ an embedding $\iota \in C^\infty(M, \R^l)$ for some $l > 0$ such that $\iota$ is isometric in the sense of preserving the Riemannian metric. We define that a map $f \colon N \to M$ is in the space $W^{1,p}_\loc(N, M)$ if $\iota \circ f \in W^{1,p}_\loc(N, \R^l)$. The resulting space is independent of the choice of $\iota$. Moreover, a mapping $f \in W^{1,p}_\loc(N, \R^l)$ possesses a measurable a.e.\ unique weak derivative map $Df \colon TN \to TM$ with its point-wise operator norm function satisfying $\abs{Df} \in L^p_\loc(N)$. For details on manifold-valued Sobolev functions, we refer readers to e.g.\ \cite{Hajlasz-Iwaniec-Maly-Onninen} or the notes \cite{Kangasniemi_QRnotes}.
	
	Let then $\omega$ be a $k$-form on $M$, and $f \in W^{1,p}_\loc(N,M)$. With the measurable weak derivative $Df \colon TN \to TM$, we may define the pull-back $f^*\omega$ as in \eqref{eq:k-form_pullback}. We note that $f^* \omega$ satisfies a point-wise norm estimate, of the form
	\begin{equation}\label{eq:pull-back_pointw_est}
		\smallabs{(f^* \omega)_x} \le \smallabs{\omega_{f(x)}} \abs{Df(x)}^k
	\end{equation}
	for a.e.\ $x \in N$. Based on this estimate, one can show that if $\omega \in C^\infty_0(\wedge^k T^* M)$ and $f \in W^{1,p}_\loc(N, M)$, then $f^* \omega \in L^{p/k}_\loc(\wedge^k T^* N)$ and $f^*\omega$ has a weak exterior derivative $df^* \omega = f^* d\omega \in L^{p/(k+1)}_\loc(\wedge^k T^* N)$. A detailed exposition of this standard result can be found in the notes \cite[Proposition 9.14]{Kangasniemi_QRnotes}.
	
	\subsection{The degree and the local index}\label{sect:degree_index}
	
	Let $N$, $M$ be oriented, open Riemannian $n$-manifolds, and let $f \in C(N, M) \cap W^{1,n}_\loc(N, M)$. For every open $U \subset N$, for which $\overline{U}$ is compact, and every $y \in M \setminus f (\partial U)$, we define the \emph{degree} $\deg(f, y, U)$ of $f$ at $y$ with respect to $U$ as follows: we let $V$ be the connected component of $y$ in $M \setminus f (\partial U)$, and define
	\begin{multline*}
		\deg(f, y, U) = \int_U f^* \omega,\\
		\text{where }\omega \in C^\infty_0(\wedge^n T^* M), \text{ } \spt \omega \subset V, \text{ and } \int_V \omega = 1.
	\end{multline*}
	Note that here, $f^* \omega\vert_U \in L^1(\wedge^n T^*U)$ since $\omega \in C^\infty_0(\wedge^n T^* M)$ and $f\vert_U \in W^{1,n}(U, M)$, and thus the above integral is well-defined and finite. 
	
	For this definition to be sensible, we also need to point out that $\deg(f, y, U)$ is independent of the choice of $\omega$ above. Indeed, $V$ is a connected, oriented, open $n$-manifold, so by Poincar\'e duality and the de Rham theorem, its $n$:th compactly supported de Rham cohomology space satisfies $H^n_{\derham, 0}(V) \cong \R$. As integration over $V$ defines a surjective linear map $H^n_{\derham, 0}(V) \to \R$, it follows that if $\omega, \omega' \in C^\infty_0(\wedge^n T^* V)$ both have unit integral, then $\omega - \omega' = d\eta$ for some $\eta \in C^\infty_0(\wedge^{n-1} T^* M)$ with $\spt \eta \subset V$. Now $f^* \eta\vert_U \in W^{d,1}_\loc(\wedge^{n-1} T^* U)$, and since $\overline{U}$ is compact and $\partial V \subset f (\partial U)$, we also have that $\spt(f^* \eta\vert_U)$ is compact; that is, $f^* \eta\vert_U \in W^{d,1}_0(\wedge^{n-1} T^*U)$. It remains to approximate $f^* \eta\vert_U$ in the $W^{d,1}$-norm with smooth forms $\alpha_j \in C^\infty_0(\wedge^{n-1} T^* U)$ using diffeomorphic charts and convolutions, and to then use Stokes' theorem on $d\alpha_j$ to conclude that the integral of $(f^*\omega - f^*\omega')\vert_U = (f^* d\eta)\vert_U$ vanishes. Thus, the degree is indeed well-defined.
	
	We recall that the degree satisfies the following excision property.
	
	\begin{lemma}\label{lem:degree_excision}
		Let $N$, $M$ be oriented, open Riemannian $n$-manifolds, let $f \in C(N, M) \cap W^{1,n}_\loc(N, M)$, and let $U, V \subset N$ be open sets such that $\overline{V} \subset U$ and $\overline{U}$ is compact. Then for all $y \in M \setminus f(\overline{U} \setminus V)$, we have $\deg(f, y, U) = \deg(f, y, V)$.
	\end{lemma}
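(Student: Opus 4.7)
The plan is to exhibit a single compactly supported smooth $n$-form on $M$ that simultaneously computes both degrees $\deg(f,y,U)$ and $\deg(f,y,V)$, and to then show that its integrals over $U$ and $V$ agree.

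First, I would set $W$ to be the connected component of $y$ in the open set $M\setminus f(\overline{U}\setminus V)$, which is non-empty by the standing hypothesis that $y\notin f(\overline{U}\setminus V)$. The key inclusion $\partial U\cup\partial V\subset\overline{U}\setminus V$ holds because $\partial U\subset\overline{U}\setminus U\subset\overline{U}\setminus V$ (using $V\subset U$) and $\partial V\subset\overline{V}\setminus V\subset U\setminus V$ (using $\overline{V}\subset U$). Hence $W$ is contained both in the connected component of $y$ in $M\setminus f(\partial U)$ and in the connected component of $y$ in $M\setminus f(\partial V)$. I would then pick any $\omega\in C^\infty_0(\wedge^n T^*M)$ with $\spt\omega\subset W$ and $\int_W\omega=1$; such an $\omega$ exists because $W$ is a non-empty open subset of an oriented $n$-manifold.

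By the well-definedness of the degree established in the paragraph preceding the lemma, this single $\omega$ is admissible for computing both $\deg(f,y,U)=\int_U f^*\omega$ and $\deg(f,y,V)=\int_V f^*\omega$. Finally, since $\spt\omega\subset W\subset M\setminus f(\overline{U}\setminus V)$, no point of $\overline{U}\setminus V$ is mapped into $\spt\omega$; the pointwise estimate \eqref{eq:pull-back_pointw_est} then forces $f^*\omega=0$ almost everywhere on $\overline{U}\setminus V$, and in particular on $U\setminus V$. Consequently $\int_U f^*\omega=\int_V f^*\omega$, which yields the claim.

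I do not anticipate any substantive difficulty here, since the argument merely combines the already-established well-definedness of the degree with the standard vanishing of pull-backs outside preimages of the support. The only mild subtlety worth checking is the chain of inclusions $\partial U\cup\partial V\subset\overline{U}\setminus V$, which is precisely what makes the common form $\omega$ simultaneously admissible for both degrees.
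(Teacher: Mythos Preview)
Your proof is correct and follows essentially the same approach as the paper's own proof: select a single $\omega$ supported in the connected component of $y$ in $M\setminus f(\overline{U}\setminus V)$, note it is admissible for both degrees, and use that $f^*\omega$ vanishes on $U\setminus V$. You have simply made explicit the inclusion $\partial U\cup\partial V\subset\overline{U}\setminus V$ that the paper leaves to the reader.
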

	\begin{proof}
		We select an $\omega \in C^\infty_0(\wedge^n T^* M)$ with unit integral that is supported in the connected component of $y$ in $M \setminus f(\overline{U} \setminus V)$. Then $\omega$ is a valid choice for the $n$-form in both of the definitions of $\deg(f, y, U)$ and $\deg(f, y, V)$. Since $f^* \omega$ vanishes on $U \setminus V$, it follows that the integrals of $f^* \omega$ over $U$ and $V$ agree, and thus $\deg(f, y, U) = \deg(f, y, V)$.
	\end{proof}
	
	Suppose then that $x \in N$ is an isolated point of $f^{-1}\{f(x)\}$. It follows from Lemma \ref{lem:degree_excision} that for every open precompact neighborhood $U$ of $x$ with $\overline{U} \cap f^{-1}\{f(x)\} = \{x\}$, the degree $\deg(f, f(x), U)$ has the same value. This value is called the \emph{local index} $i(x, f)$ of $f$ at $x$.
	
	In general, the degree $\deg(f, y, U)$ and local index $i(x, f)$ are integer-valued topological invariants that can be defined for continuous $f \colon N \to M$ between oriented topological $n$-manifolds. See e.g.\ the monograph \cite{Fonseca-Gangbo-book} for an exposition on the topological degree and local index for mappings between domains of $\R^n$, as well as how the topological degree connects to continuous Sobolev maps. 
	
	\subsection{Quasiregular values}
	
	The definition of quasiregular values was originally stated in \cite{Kangasniemi-Onninen_1ptReshetnyak}. There, it was given for maps $f \in W^{1, n}_\loc(\Omega, \R^n)$ on an open set $\Omega \subset \R^n$. In particular, such a map has a $(K, \Sigma)$-quasiregular value at $y_0 \in \R^n$ for a given $K \ge 1$ and $\Sigma \in L^1_\loc(\Omega)$ if 
	\[
		\abs{Df(x)}^n \le K J_f(x) + \abs{f(x) - y_0}^n \Sigma(x)
	\]
	for a.e.\ $x \in \Omega$. The version of this definition stated in \eqref{eq:QRval_def} for maps between oriented Riemannian $n$-manifolds is a natural generalization of this definition to the manifold setting.
	
	A key property of quasiregular values that we require is that they satisfy a single-value Reshetnyak's theorem. This is the main result of \cite{Kangasniemi-Onninen_1ptReshetnyak}; we recall the theorem statement for the convenience of the reader.
	
	\begin{thm}[{\cite[Theorem 1.2]{Kangasniemi-Onninen_1ptReshetnyak}}]\label{thm:reshetnyak}
		Let $\Omega \subset \R^n$ be an open domain, and let $f \in C(\Omega, \R^n) \cap W^{1,n}_\loc(\Omega, \R^n)$. Suppose that $f$ has a $(K, \Sigma)$-quasiregular value at $y_0 \in \R^n$, where $K \ge 1$ and $\Sigma \in L^{1+\eps}_\loc(\Omega)$ for some $\eps > 0$. Then either $f$ is the constant function $f \equiv y_0$, or the following conditions hold:
		\begin{enumerate}[label=(\roman*)]
			\item \label{enum:reshetnyak_discr} $f^{-1}\{y_0\}$ is discrete;
			\item \label{enum:reshetnyak_sensepres} for every $x \in f^{-1}\{y_0\}$, we have $i(x, f) > 0$;
			\item \label{enum:reshetnyak_open} for every $x \in f^{-1}\{y_0\}$ and every neighborhood $U \subset \Omega$ of $x$, $y_0$ is in the interior of $fU$.
		\end{enumerate}
	\end{thm}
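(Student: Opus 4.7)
The plan is to follow the classical Reshetnyak strategy (discreteness, sense-preservation, openness) adapted so that all estimates localize near $f^{-1}\{y_0\}$, where the extra $\dist^n(f(x), y_0)\Sigma(x)$ term in \eqref{eq:QRval_def} is small and the inequality is essentially the usual $K$-quasiregularity condition. Write $v(x) = \abs{f(x) - y_0}$. The first task is to show that on a small neighborhood $U$ of any $x \in f^{-1}\{y_0\}$, the distortion inequality self-improves: using the $L^{1+\eps}_\loc$ integrability of $\Sigma$ together with a Caccioppoli-type inequality (multiplying \eqref{eq:QRval_def} by a cut-off and integrating), a Gehring-lemma argument should give $\abs{Df} \in L^{n+\delta}_\loc(U)$. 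This immediately yields a continuous representative on $U$ and a modulus of continuity for $v$.

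Next I would establish discreteness (\ref{enum:reshetnyak_discr}) via a unique continuation statement for $v$. The idea is that the coordinates of $f - y_0$, viewed weakly, form an approximate $n$-harmonic mapping near the zero set of $v$, since the $\Sigma$-term carries an explicit $v^n$ factor that makes it a subordinate perturbation. Concretely, I would derive a monotonicity-type lower bound of the form $\osc_{B(x_0, r)} v \ge c r^{\alpha}$ for some $\alpha > 0$ whenever $f$ is not identically $y_0$, by running the standard $n$-Laplace-type comparison argument on spherical means of $\log(1/v)$ and absorbing the $\Sigma$-integral $\int_{B(x_0,r)} v^n \Sigma$ into the main term (this is where the higher integrability of $\Sigma$ is used to close the estimate as $r \to 0$). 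An accumulation point in $f^{-1}\{y_0\}$ would contradict such a lower bound. The dichotomy between $f \equiv y_0$ and discreteness is completed by a connectedness argument on the set of $x$ at which $v$ vanishes to infinite order.

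For sense-preservation (\ref{enum:reshetnyak_sensepres}), fix $x \in f^{-1}\{y_0\}$ and choose a small ball $U$ with $\overline{U} \cap f^{-1}\{y_0\} = \{x\}$. The distortion inequality forces $J_f \ge 0$ a.e.\ on the subset $\{v \le v_0\} \cap U$ for $v_0$ small enough that $\dist^n(f(x), y_0)\Sigma(x) \le K J_f(x) + \abs{Df(x)}^n$ is dominated by $K J_f$ after Cauchy--Schwarz-type absorption; combined with $\abs{Df}^n \le K J_f + o(1)$ as $v \to 0$, this yields
\[
	i(x, f) \;=\; \deg(f, y_0, U) \;=\; \lim_{r \to 0} \dashint_{B(y_0, r)} \deg(f, y, U) \dd y \;\ge\; 0,
\]
and then strict positivity follows because $f$ is non-constant on $U$ and the change-of-variables formula shows $\int_U f^* \omega > 0$ for a suitable non-negative bump $\omega$ supported near $y_0$. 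Finally, openness at $y_0$ (\ref{enum:reshetnyak_open}) is a topological consequence of $f$ being continuous, discrete at $x$, and sense-preserving in a neighborhood of $x$, via the standard degree-theoretic argument of \cite{Fonseca-Gangbo-book}.

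The main obstacle is the discreteness/unique continuation step. In the pure quasiregular case this rests on a sharp reverse H\"older or Harnack inequality for solutions of the $n$-Laplace equation, and here one must run the same arguments for an inhomogeneous $n$-Laplace-type inequality whose right-hand side is only $L^{1+\eps}_\loc$. The choice of auxiliary function (some conformal transform of $v$) and the precise iteration scheme absorbing the $\Sigma$-contribution into the monotonicity estimate is the delicate point; everything else is a fairly direct adaptation of the classical argument.
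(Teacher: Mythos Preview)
This theorem is not proved in the present paper: it is quoted verbatim from \cite{Kangasniemi-Onninen_1ptReshetnyak} as a black-box input (see the sentence immediately preceding the statement, ``This is the main result of \cite{Kangasniemi-Onninen_1ptReshetnyak}; we recall the theorem statement for the convenience of the reader''). There is therefore no proof in this paper to compare your proposal against; the paper only uses Theorem~\ref{thm:reshetnyak} to derive the manifold version, Corollary~\ref{cor:reshetnyak_mflds}, via local bilipschitz charts.

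As for the proposal itself, the broad architecture (higher integrability via Gehring, then a unique-continuation/discreteness step for $v=\abs{f-y_0}$, then degree theory) is the right shape, and is indeed close in spirit to how \cite{Kangasniemi-Onninen_1ptReshetnyak} proceeds. However, two of your steps are not yet arguments. First, in \ref{enum:reshetnyak_sensepres} you assert that the distortion inequality forces $J_f\ge 0$ a.e.\ on $\{v\le v_0\}$ for small $v_0$; this is false as stated, since \eqref{eq:QRval_def} only gives $K J_f \ge \abs{Df}^n - v^n\Sigma$, which allows $J_f<0$ wherever $\abs{Df}^n < v^n\Sigma$. The actual positivity of $i(x,f)$ in \cite{Kangasniemi-Onninen_1ptReshetnyak} comes from a more delicate argument that controls the negative part of $J_f$ in an integrated sense, not pointwise. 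Second, your justification that ``$\int_U f^*\omega>0$ for a suitable non-negative bump $\omega$'' is circular: that integral equals $\deg(f,y_0,U)\int\omega$, so asserting it is positive is exactly the conclusion $i(x,f)>0$ you are trying to prove. You need an independent mechanism (in the cited paper, this is tied to the discreteness/linking-number analysis and a careful use of the change-of-variables inequality for the positive part of $J_f$) to rule out $i(x,f)\le 0$.
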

	
	We also note that one generally assumes that $K \ge 1$ in the definitions of quasiregular mappings and quasiregular values. For quasiregular mappings, this is because every map that is $K$-quasiregular with $0 \le K < 1$ is constant. For quasiregular values, we have the following analogous result shown in \cite{Kangasniemi-Onninen_1ptReshetnyak}.
	
	\begin{thm}[{\cite[Theorem 1.5]{Kangasniemi-Onninen_1ptReshetnyak}}]\label{thm:K_less_than_1}
		Let $\Omega \subset \R^n$ be an open domain, and let $f \in C(\Omega, \R^n) \cap W^{1,n}_\loc(\Omega, \R^n)$. Suppose that $f$ has a $(K, \Sigma)$-quasiregular value at $y_0 \in \R^n$ with $0 \le K < 1$ and $\Sigma \in L^{1+\eps}_\loc(\Omega)$ for some $\eps >0$, then $y_0 \notin f (\Omega)$ or $f \equiv y_0$.
	\end{thm}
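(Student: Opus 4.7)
Assume for contradiction that $y_0 \in f(\Omega)$ and $f \not\equiv y_0$. The plan is to reduce the hypothesis to a form where Theorem~\ref{thm:reshetnyak} applies, and then, exploiting the discreteness of $f^{-1}\{y_0\}$ it provides, run a Morrey-type argument on the logarithmic distance $u := \log |f - y_0|$ to obtain a contradiction. The reduction uses $K < 1$ algebraically. Since $J_f \le |Df|^n$ and $|J_f| \le |Df|^n$ hold a.e.\ (the operator norm dominates every singular value), the inequality $|Df|^n \le KJ_f + |f-y_0|^n \Sigma$ rearranges to $(1-K)|Df|^n \le |f-y_0|^n\Sigma$, so in particular $-(1-K)J_f \le |f-y_0|^n\Sigma$. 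Writing $KJ_f = J_f - (1-K)J_f$ in the original inequality then upgrades it to
\[
|Df|^n \le J_f + 2|f-y_0|^n\Sigma \qquad \text{a.e.};
\]
that is, $f$ has a $(1, 2\Sigma)$-quasiregular value at $y_0$ with $2\Sigma \in L^{1+\eps}_\loc(\Omega)$. Theorem~\ref{thm:reshetnyak} then applies, and since $f \not\equiv y_0$, the preimage $f^{-1}\{y_0\}$ is discrete. Fix an isolated $x_0 \in f^{-1}\{y_0\}$ and $r > 0$ with $\overline{\B^n_r}(x_0) \subset \Omega$ and $\overline{\B^n_r}(x_0) \cap f^{-1}\{y_0\} = \{x_0\}$.

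Next I would set $u := \log|f - y_0|$ on the punctured ball $\B^n_r(x_0) \setminus \{x_0\}$. On compact subsets of this set, $|f-y_0|$ is continuous and bounded away from zero, so the Sobolev chain rule against $\log \circ |\cdot|$ gives $u \in W^{1,n}_\loc(\B^n_r(x_0) \setminus \{x_0\})$ with $|\nabla u| \le |Df|/|f-y_0|$ a.e. Inserting the bound $|Df|^n \le (1-K)^{-1}|f-y_0|^n\Sigma$ from the previous paragraph yields
\[
|\nabla u|^n \le (1-K)^{-1}\Sigma \qquad \text{a.e.},
\]
so $|\nabla u| \in L^p_\loc(\B^n_r(x_0))$ for $p := n(1+\eps) > n$. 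By continuity of $f$ with $f(x_0) = y_0$, the function $u$ is continuous on the punctured ball with $u(x) \to -\infty$ as $x \to x_0$.

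The contradiction will come from a standard Morrey/annulus chain. On the dyadic annuli $A_k := \B^n_{2^{-k}r}(x_0) \setminus \overline{\B^n_{2^{-k-1}r}(x_0)}$, dilation-invariance of Morrey's inequality yields a constant $C$ independent of $k$ with
\[
\osc_{A_k} u \le C (2^{-k}r)^{1 - n/p} \, \|\nabla u\|_{L^p(A_k)}.
\]
H\"older's inequality against the geometric series $\sum_k (2^{-k}r)^{q(1-n/p)}$, which converges since $p > n$ forces $q(1-n/p) > 0$ for $q = p/(p-1)$, then bounds $\sum_k \osc_{A_k} u$ by a finite multiple of $\|\nabla u\|_{L^p(\B^n_r(x_0))}$. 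Chaining these oscillation bounds across slightly thickened, overlapping annuli produces a uniform bound on $|u(x) - u(x^*)|$ for any $x \in \B^n_{r/2}(x_0) \setminus \{x_0\}$ and a fixed reference point $x^* \in \partial \B^n_{r/2}(x_0)$. This contradicts $u(x) \to -\infty$ as $x \to x_0$, completing the proof. The main obstacle is precisely this chaining: one needs the Morrey constant to be scale-invariant across dyadic scales (handled by straightforward dilation) and an overlapping-annulus scheme to pass from annulus-wise oscillation control to a single global bound on the punctured ball.
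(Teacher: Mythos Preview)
The paper does not prove Theorem~\ref{thm:K_less_than_1}; it quotes the result from \cite[Theorem~1.5]{Kangasniemi-Onninen_1ptReshetnyak} without reproducing the argument. So there is no in-paper proof to compare against.

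That said, your argument is correct and self-contained modulo the already-quoted Theorem~\ref{thm:reshetnyak}. The two key moves are both sound. First, the algebraic reduction: from $J_f \le \abs{Df}^n$ you get $(1-K)\abs{Df}^n \le \abs{f-y_0}^n\Sigma$, and from $-J_f \le \abs{Df}^n$ you get $-(1-K)J_f \le \abs{f-y_0}^n\Sigma$, which together upgrade the hypothesis to a $(1,2\Sigma)$-quasiregular value, making Theorem~\ref{thm:reshetnyak} applicable and yielding discreteness of $f^{-1}\{y_0\}$. Second, the Morrey--annulus chain: the bound $(1-K)\abs{Df}^n \le \abs{f-y_0}^n\Sigma$ gives $\abs{\nabla\log\abs{f-y_0}} \in L^{n(1+\eps)}$ on a punctured ball about an isolated preimage point $x_0$, and the scale-invariant oscillation estimate on dyadic (overlapping) annuli sums to a finite bound, contradicting $\log\abs{f-y_0} \to -\infty$ at $x_0$.

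Two minor points worth tightening. You should state at the outset that $\Sigma$ may be replaced by $\abs{\Sigma}$, so that all the inequalities with $\Sigma$ on the right are genuine upper bounds. And the dyadic-annulus sum can be handled more simply than you indicate: since $\smallnorm{\nabla u}_{L^p(A_k)} \le \smallnorm{\nabla u}_{L^p(\B^n_r(x_0))}$ uniformly in $k$, the geometric series $\sum_k (2^{-k}r)^{1-n/p}$ already gives convergence without any H\"older step. Alternatively, one can bypass the chaining entirely by observing that the truncations $u_M = \max(u,-M)$ lie in $W^{1,p}(\B^n_r(x_0))$ with $\smallnorm{\nabla u_M}_{L^p} \le \smallnorm{\nabla u}_{L^p}$ uniformly in $M$, so Morrey's embedding on the full ball gives $\abs{u_M(x_0) - u_M(x^*)} \le C$ independent of $M$, which is impossible since $u_M(x_0) = -M$.
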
 

    The above two Theorems from \cite{Kangasniemi-Onninen_1ptReshetnyak} are stated in the Euclidean setting, but we require them for mappings between Riemannian $n$-manifolds. For this reason, we generalize both of them to the Riemannian setting here. This generalization process is relatively straightforward, relying on the following lemma.

    \begin{lemma}\label{lem:QRval_charts}
        Let $N$ and $M$ be oriented, open Riemannian $n$-manifolds, and let $f \in C(N, M) \cap W^{1,n}_\loc(N, M)$ be a map with a $(K, \Sigma)$-quasiregular value at $y_0 \in M$, where $K \ge 0$. Let $U, V \subset \R^n$ be open, and let $\phi \colon U \to N$ and $\psi \colon V \to M$ be smooth, orientation-preserving $L$-bilipschitz charts with $y_0 = \psi(0)$ and $f(\phi U) \subset \psi V$. Then the map $F \colon U \to V$ given by $F = \psi^{-1} \circ f \circ \phi$ has a $(K', \Sigma')$-quasiregular value at $0$ with $K' = L^{4n} K$ and $\Sigma' = L^{3n} (\Sigma \circ \phi)$. Moreover, if, $\Sigma \in L^{p}_\loc(N)$ with $p \in [1, \infty]$, then $\Sigma' \in L^p_\loc(U)$.
    \end{lemma}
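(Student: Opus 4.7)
The plan is a direct chain-rule computation combined with the pointwise bounds that come for free from the $L$-bilipschitz property. Let me abbreviate $x = \phi(u)$ and write $F(u) = \psi^{-1}(f(x))$. First I would apply the chain rule for Sobolev compositions to get
\[
    DF(u) = D\psi^{-1}(f(x)) \circ Df(x) \circ D\phi(u)
\]
for a.e.\ $u \in U$. Since $\phi$ and $\psi$ are $L$-bilipschitz $C^\infty$-diffeomorphisms, the operator norms satisfy $\abs{D\phi(u)} \le L$ and $\abs{D\psi^{-1}(f(x))} \le L$ pointwise. Submultiplicativity of the operator norm then yields
\[
    \abs{DF(u)}^n \le L^{2n} \abs{Df(x)}^n.
\]

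Second, I would convert the Jacobian of $f$ into the Jacobian of $F$. Because $\phi$ and $\psi$ are orientation-preserving and $L$-bilipschitz, their Jacobians are bounded below by $L^{-n}$, and in particular the same is true for $\psi^{-1}$. Using the multiplicativity $J_F(u) = J_{\psi^{-1}}(f(x))\, J_f(x)\, J_\phi(u)$ (all factors non-negative by orientation) I would conclude $J_f(x) \le L^{2n} J_F(u)$. Inserting these two estimates into the $(K, \Sigma)$-distortion inequality for $f$ at $y_0$ gives
\[
    \abs{DF(u)}^n \le L^{4n} K\, J_F(u) + L^{2n} \dist^n(f(x), y_0)\, \Sigma(x).
\]

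Third, I would rewrite the distance factor in $V$-coordinates. Since $y_0 = \psi(0)$ and $\psi$ is $L$-bilipschitz,
\[
    \dist(f(x), y_0) = \dist(\psi(F(u)), \psi(0)) \le L \abs{F(u)},
\]
and so $\dist^n(f(x), y_0) \le L^n \abs{F(u)}^n$. Substituting this into the previous estimate gives exactly the desired
\[
    \abs{DF(u)}^n \le L^{4n} K\, J_F(u) + L^{3n} \abs{F(u)}^n\, \Sigma(\phi(u)),
\]
i.e.\ $F$ has a $(K', \Sigma')$-quasiregular value at $0$ with $K' = L^{4n} K$ and $\Sigma'(u) = L^{3n}\Sigma(\phi(u))$.

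Finally, the local integrability statement follows from the bilipschitz change of variables $x = \phi(u)$: for any compact $E \subset U$, the image $\phi(E) \subset N$ is compact, and $J_{\phi^{-1}} \le L^n$ gives $\int_E \abs{\Sigma'}^p\, du \le L^{3np+n} \int_{\phi(E)} \abs{\Sigma}^p$, which is finite whenever $\Sigma \in L^p_\loc(N)$. I do not anticipate any essential obstacle here; the only minor point worth care is justifying the a.e.\ chain rule and the multiplicativity of the Jacobian for the $W^{1,n}_\loc$ map $f$ composed with smooth diffeomorphisms, which is standard.
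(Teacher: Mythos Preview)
Your proposal is correct and follows essentially the same approach as the paper: the paper's proof is precisely the chain of inequalities $\abs{DF}^n \le L^{2n}\abs{Df\circ\phi}^n \le L^{2n}K J_f\circ\phi + L^{2n}\dist^n(f\circ\phi,y_0)\,\Sigma\circ\phi \le L^{4n}K J_F + L^{3n}\abs{F}^n\,\Sigma\circ\phi$, together with the remark that local $L^p$-integrability of $\Sigma'$ is immediate from the bilipschitz change of variables. Your write-up just makes the intermediate steps (operator-norm submultiplicativity, Jacobian multiplicativity, and the distance comparison via $\psi$) more explicit.
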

    \begin{proof}
        We observe that $F \in C(U, \R^n) \cap W^{1,n}_\loc(U, \R^n)$ by a chain rule of bilipschitz and Sobolev maps. Moreover, we have
        \begin{align*}
            \abs{DF(x)}^n &\le L^{2n} \abs{Df(\phi(x))}^n \\
            &\le L^{2n} K J_f(\phi(x)) + L^{2n} \dist(f(\phi(x)), f(\phi(0)))^n \Sigma(\phi(x))\\
            &\le L^{4n} K J_F(x) + L^{3n} \abs{F(x)}^n \Sigma(\phi(x))\\
            &= K' J_F(x) + \abs{F(x)}^n \Sigma'(x)
        \end{align*}
		for a.e.\ $x \in U$. The final part that $\Sigma'$ has the same local integrability as $\Sigma$ also immediately follows from the fact that $\phi$ is a smooth bilipschitz map.
    \end{proof}

    We first use this lemma to generalize Theorem \ref{thm:reshetnyak}.

    \begin{cor}\label{cor:reshetnyak_mflds}
        Let $N$ and $M$ be connected, oriented, open Riemannian $n$-manifolds, and let $f \in C(N, M) \cap W^{1,n}_\loc(N, M)$ be a map with a $(K, \Sigma)$-quasiregular value at $y_0 \in M$, where $K \ge 1$ and $\Sigma \in L^{1+\eps}_\loc(N)$ for some $\eps > 0$. Then either $f$ is the constant function $f \equiv y_0$, or the following conditions hold:
		\begin{enumerate}[label=(\roman*)]
			\item \label{enum:reshetnyak_discr_mfld} $f^{-1}\{y_0\}$ is discrete;
			\item \label{enum:reshetnyak_sensepres_mfld} for every $x \in f^{-1}\{y_0\}$, $i(x,f)$ is a positive integer;
			\item \label{enum:reshetnyak_open_mfld} for every $x \in f^{-1}\{y_0\}$ and every neighborhood $U \subset \Omega$ of $x$, $y_0$ is in the interior of $fU$.
		\end{enumerate}
    \end{cor}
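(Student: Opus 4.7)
The plan is to reduce this corollary to the Euclidean Theorem \ref{thm:reshetnyak} by working in bilipschitz charts, using Lemma \ref{lem:QRval_charts} to transfer the quasiregular value hypothesis. Since conditions \ref{enum:reshetnyak_discr_mfld}--\ref{enum:reshetnyak_open_mfld} are all local properties at points of $f^{-1}\{y_0\}$, they will follow at once from the Euclidean conclusion in charts, combined with the observation that discreteness, the topological local index, and being an interior point of the image are all preserved by orientation-preserving homeomorphisms. The only genuinely global ingredient is the dichotomy between $f \equiv y_0$ and the three conditions, which will be handled by a connectedness argument on $N$.

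Setting up the charts comes first. Since $N$ and $M$ are smooth Riemannian manifolds, around any point of $N$ we can choose a smooth orientation-preserving bilipschitz chart, and the same is true around any point of $M$. Thus for any $x \in f^{-1}\{y_0\}$, I would pick smooth orientation-preserving bilipschitz charts $\phi \colon U' \to N$ and $\psi \colon V' \to M$ with $x \in \phi(U')$, $\psi(0) = y_0$, and, after shrinking $U'$ using continuity of $f$, with $f(\phi U') \subset \psi V'$. By Lemma \ref{lem:QRval_charts}, the conjugated map $F := \psi^{-1} \circ f \circ \phi \colon U' \to V'$ then has a $(K', \Sigma')$-quasiregular value at $0$ with $K' \ge K \ge 1$ and $\Sigma' \in L^{1+\eps}_\loc(U')$, so Theorem \ref{thm:reshetnyak} applies to $F$ on the open set $U'$.

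To establish the dichotomy, I would consider the set $A = \{x \in N : f \equiv y_0 \text{ on some neighborhood of } x\}$, which is open by definition. For closedness, take $x \in \overline{A}$; continuity of $f$ gives $f(x) = y_0$, and in charts around $x$ and $y_0$ as above, $F^{-1}\{0\}$ contains a nonempty open subset of $U'$, namely the $\phi^{-1}$-image of a neighborhood in $A$. This rules out condition \ref{enum:reshetnyak_discr} for $F$, so Theorem \ref{thm:reshetnyak} forces $F$ to be identically $0$ on $U'$, hence $f \equiv y_0$ on $\phi(U')$ and $x \in A$. Connectedness of $N$ then yields $A = N$, in which case $f \equiv y_0$, or $A = \emptyset$.

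In the remaining case $A = \emptyset$, for every $x \in f^{-1}\{y_0\}$ the corresponding $F$ is not identically $0$ (otherwise $x$ would lie in $A$), so Theorem \ref{thm:reshetnyak} yields conditions \ref{enum:reshetnyak_discr}--\ref{enum:reshetnyak_open} for $F$ at $0$. These then transfer to $f$ at $x$ via the orientation-preserving bilipschitz charts: discreteness of $F^{-1}\{0\}$ in $U'$ gives discreteness of $f^{-1}\{y_0\}$ near $x$; the topological local index satisfies $i(x, f) = i(0, F)$ because the local index is invariant under orientation-preserving homeomorphisms, so $i(x,f)$ is a positive integer; and the interior property for $y_0 = \psi(0)$ in $f U$ carries over because $\psi$ is a homeomorphism. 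If $f^{-1}\{y_0\} = \emptyset$, the three conditions hold vacuously. I do not foresee a serious obstacle here: the most delicate steps are the connectedness-based bootstrap from local to global constancy and the invariance of the topological local index under the chart homeomorphisms, both of which are standard.
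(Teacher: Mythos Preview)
Your proposal is correct and follows essentially the same route as the paper: reduce to the Euclidean Theorem \ref{thm:reshetnyak} via Lemma \ref{lem:QRval_charts}, run a clopen argument on $N$ for the dichotomy, and transfer the three local conclusions through the charts. The only minor differences are that the paper's closedness step for $\intr(f^{-1}\{y_0\})$ is purely topological (a closed set minus its isolated points is closed) rather than a second appeal to Theorem \ref{thm:reshetnyak}, and the paper verifies $i(x,f)=i(0,F)$ explicitly via the integral degree of Section \ref{sect:degree_index} instead of invoking topological invariance of the local index.
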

    \begin{proof}
        Let $x_0 \in f^{-1}\{y_0\}$. We select open, connected chart neighborhoods $\phi U$ and $\psi V$ of $x_0$ and $y_0$, respectively, such that $\phi \colon U \to N$ and $\psi \colon V \to M$ are smooth, orientation-preserving $L$-bilipschitz charts with $f(\phi U) \subset \psi V$, $x_0 = \phi(0)$, and $y_0 = \psi(0)$. By Lemma \ref{lem:QRval_charts}, the map $F \colon U \to V$ defined by $F = \psi^{-1} \circ f \circ \phi$ has a $(K', \Sigma')$-quasiregular value at $0$ with $K' \ge 1$ and $\Sigma' \in L^{1+\eps}_\loc(U)$. 

        By Theorem \ref{thm:reshetnyak} \ref{enum:reshetnyak_discr}, either $F^{-1}\{0\}$ is discrete, or $F^{-1}\{0\} = U$. Thus, either $x_0$ is an isolated point of $f^{-1}\{y_0\}$, or $x_0$ is an interior point of $f^{-1}\{y_0\}$. Since $f^{-1}\{y_0\}$ is closed, and since a closed set remains closed after removing arbitrarily many isolated points from it, we conclude that $\intr (f^{-1}\{y_0\})$ is closed, and hence clopen. Since $N$ is connected, we have either $\intr (f^{-1}\{y_0\}) = N$, in which case $f \equiv y_0$, or $\intr (f^{-1}\{y_0\}) = \emptyset$, in which case every point of $f^{-1}\{y_0\}$ is isolated. Thus, \ref{enum:reshetnyak_discr_mfld} is proven.

        Now, supposing $F^{-1}\{0\}$ is discrete, Theorem \ref{thm:reshetnyak} \ref{enum:reshetnyak_sensepres} yields that $i(0, F)$ is a positive integer. We select a $U_0$ compactly contained in $U$ such that $F^{-1}\{0\} \cap \overline{U_0} = \{0\}$. Thus, $f^{-1}\{y_0\} \cap \overline{\varphi U_0} = \{x_0\}$. Now, if $\omega$ is an $n$-form with unit integral that is supported in the connected component of $y_0$ in $M \setminus f(\partial \phi U_0)$, then $\psi^* \omega$ is an $n$-form with unit integral that is supported in the connected component of $0$ in $V \setminus F(\partial U_0)$, and we may compute
        \[
            \deg(f, y_0, \varphi U_0)
            = \int_{\varphi U_0} f^* \omega
            = \int_{U_0} \phi^* f^* \omega 
            = \int_{U_0} F^* \psi^* \omega
            = \deg(F, 0, U_0).
        \]
        It follows that $i(x_0, f) = i(0, F)$, proving \ref{enum:reshetnyak_sensepres_mfld}.

        Finally, if $U' \subset N$ is a neighborhood of $x$, then $F(\phi^{-1} U')$ is a neighborhood of $0$ by Theorem \ref{thm:reshetnyak} \ref{enum:reshetnyak_open}. Since $\psi$ is a homeomorphism, it follows that $f(U \cap U') = \psi(F(\phi^{-1} U'))$ is a neighborhood $y_0$. Since $f(U \cap U') \subset f(U')$, we conclude that $f(U')$ is a neighborhood of $y_0$, proving \ref{enum:reshetnyak_open_mfld}.
    \end{proof}
	
	We then similarly generalize Theorem \ref{thm:K_less_than_1} to manifolds.
	
	\begin{cor}\label{cor:K_less_than_1_mflds}
		Let $N$ and $M$ be connected, oriented, open Riemannian $n$-manifolds, and let $f \in C(N, M) \cap W^{1,n}_\loc(N, M)$ be a map with a $(K, \Sigma)$-quasiregular value at $y_0 \in M$, where $0 \le K < 1$ and $\Sigma \in L^{1+\eps}_\loc(N)$ for some $\eps>0$. Then $y_0 \notin f(N)$ or $f \equiv y_0$.
	\end{cor}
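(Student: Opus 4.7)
The plan is to reduce the statement to the Euclidean setting via Lemma \ref{lem:QRval_charts} and then invoke Theorem \ref{thm:K_less_than_1}, concluding via a clopen / connectedness argument in the spirit of the proof of Corollary \ref{cor:reshetnyak_mflds}. The extra subtlety compared to Corollary \ref{cor:reshetnyak_mflds} is that here we must preserve the strict inequality $K < 1$ after passing through charts, since Lemma \ref{lem:QRval_charts} inflates the distortion constant by a factor of $L^{4n}$.

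The key observation is that the bilipschitz constant $L$ in Lemma \ref{lem:QRval_charts} can be taken arbitrarily close to $1$ at the cost of shrinking the charts, e.g.\ using Riemannian normal coordinates (in which the metric tensor is $\delta_{ij} + O(\abs{x}^2)$ near the base point). So, assuming $y_0 \in f(N)$, I would pick any $x_0 \in f^{-1}\{y_0\}$, choose $L > 1$ small enough that $L^{4n} K < 1$ (possible since $K < 1$), and select smooth orientation-preserving $L$-bilipschitz charts $\phi \colon U \to N$ and $\psi \colon V \to M$ with $\phi(0) = x_0$, $\psi(0) = y_0$, and $f(\phi U) \subset \psi V$, taking $U$ to be connected (e.g.\ a small open ball). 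By Lemma \ref{lem:QRval_charts}, the map $F = \psi^{-1} \circ f \circ \phi \colon U \to V \subset \R^n$ lies in $C(U, \R^n) \cap W^{1,n}_\loc(U, \R^n)$ and has a $(K', \Sigma')$-quasiregular value at $0$ with $K' = L^{4n} K < 1$ and $\Sigma' \in L^{1+\eps}_\loc(U)$. Since $F(0) = 0$, the alternative $0 \notin F(U)$ of Theorem \ref{thm:K_less_than_1} is excluded, so $F \equiv 0$ on $U$. Pulling back, $f \equiv y_0$ on the open neighborhood $\phi(U)$ of $x_0$.

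This shows that every point of $f^{-1}\{y_0\}$ lies in the interior of $f^{-1}\{y_0\}$, so $f^{-1}\{y_0\}$ is open in $N$. It is also closed in $N$ by continuity of $f$, and hence clopen. Since $N$ is connected and non-empty, either $f^{-1}\{y_0\} = \emptyset$, in which case $y_0 \notin f(N)$, or $f^{-1}\{y_0\} = N$, in which case $f \equiv y_0$. The only nontrivial step is the chart-side engineering in the second paragraph, namely ensuring $L^{4n}K$ stays below $1$; everything else is a direct transcription of the Euclidean result via charts combined with a connectedness argument.
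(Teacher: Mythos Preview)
Your proof is correct and follows essentially the same approach as the paper: reduce to the Euclidean case via Lemma~\ref{lem:QRval_charts} with charts chosen so that $L^{4n}K < 1$, apply Theorem~\ref{thm:K_less_than_1} to conclude that $f^{-1}\{y_0\}$ is open, and finish by connectedness. The paper's argument is identical in structure, including the key observation about choosing $L$ close enough to $1$.
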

	\begin{proof}
		Suppose that $x_0 \in f^{-1}\{y_0\}$. We may again select open, connected chart neighborhoods $\phi U$ and $\psi V$ of $x_0$ and $y_0$, respectively, such that $\phi \colon U \to N$ and $\psi \colon V \to M$ are smooth, orientation-preserving $L$-bilipschitz charts with $L$ close enough to $1$ that $L^{4n} K < 1$, and we moreover have $f(\phi U) \subset \psi V$, $x_0 = \phi(0)$, and $y_0 = \psi(0)$. Let $F \colon U \to V$ be given by $F = \psi^{-1} \circ f \circ \phi$. By Lemma \ref{lem:QRval_charts}, $F$ has a $(K', \Sigma')$-quasiregular value at $0 \in \R^n$, where $K' = L^{4n} K < 1$ and $\Sigma' \in L^{1+\eps}_\loc(U)$.
		
		Thus, by Theorem \ref{thm:K_less_than_1}, $F^{-1}\{0\} = U$, and consequently $f^{-1}\{y_0\} \cap \phi U = \phi U$. We have therefore shown that $f^{-1}\{y_0\}$ is open in $N$. Since $N$ is connected, and since $f^{-1}(M \setminus \{y_0\})$ is also open by continuity of $f$, we conclude that one of these sets must equal all of $N$.
	\end{proof}
	
	\subsection{Gehring's lemma}
	
	We then recall a local version of Gehring's lemma that is shown in \cite{Iwaniec_GehringLemma}. As stated in the introduction, this lemma was used in e.g.\ \cite{Kangasniemi-Onninen_1ptReshetnyak} to show higher Sobolev regularity of Euclidean maps with quasiregular values, and we will also use it to show Proposition \ref{prop:higher_int_general}. Note that in the following statement, if $Q \subset \R^n$ is a cube and $\sigma > 0$, then we use $\sigma Q$ to denote the cube $Q$ scaled by $\sigma$ while retaining its center.
	
	\begin{prop}[{\cite[Proposition 6.1]{Iwaniec_GehringLemma}}]\label{prop:local_Gehring_lemma}
		Let $Q_0$ be a cube in $\R^n$, and let $g, h \in L^p(Q_0)$, $1 < p < \infty$, be non-negative functions satisfying
		\[
		\left( \dashint_Q g^p \right)^\frac{1}{p} \leq C_0 \dashint_{2Q} g + \left( \dashint_{2Q} h^p \right)^\frac{1}{p}
		\]
		for all cubes $Q$ with $2Q \subset Q_0$. Then there exists $q_0 = q_0(n, p, C_0) > p$ such that for all $q \in (p, q_0)$ and $\sigma \in (0,1)$, we have
		\[
		\left( \dashint_{\sigma Q_0} g^q \right)^\frac{1}{q} \leq C(n, p, q, \sigma) \left(  \left( \dashint_{Q_0} g^p \right)^\frac{1}{p} + \left( \dashint_{Q_0} h^q \right)^\frac{1}{q} \right).
		\]
	\end{prop}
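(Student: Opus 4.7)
The plan is to follow the classical proof strategy of Gehring's lemma, adapted to the local setting by means of a stopping-time Calderón--Zygmund decomposition that respects the constraint $2Q \subset Q_0$. The output, a reverse Hölder self-improvement, will ultimately be obtained by integrating a good-$\lambda$ estimate via the layer-cake formula, so the whole argument hinges on producing such an estimate from the hypothesis.

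To set up the stopping time, I would fix a subcube $Q_1 \subset Q_0$ slightly larger than $\sigma Q_0$ and, for a threshold $\lambda \ge \lambda_0$ with $\lambda_0$ a suitable multiple of $\bigl(\dashint_{Q_0} g^p\bigr)^{1/p} + \bigl(\dashint_{Q_0} h^q\bigr)^{1/q}$, perform a Calderón--Zygmund decomposition of $g^p$ on a dyadic partition of $Q_1$ whose side lengths are small enough that all selected cubes $Q$ satisfy $2Q \subset Q_0$. This yields a family of maximal dyadic cubes $\{Q_j\}$ on which $\dashint_{Q_j} g^p > \lambda^p$ while $\dashint_{\hat Q_j} g^p \le \lambda^p$ for the dyadic parent $\hat Q_j$, and on the complement $g^p \le \lambda^p$ a.e. The maximality combined with bounded overlap when passing from $Q_j$ to $2Q_j$ will give $\dashint_{2 Q_j} g^p \lesssim \lambda^p$, and hence also $\dashint_{2Q_j} g \lesssim \lambda$ by Jensen's inequality.

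Applied at each $Q_j$, the hypothesis inequality then reads
\[
  \Bigl(\dashint_{Q_j} g^p\Bigr)^{1/p} \le C_0 \dashint_{2Q_j} g + \Bigl(\dashint_{2Q_j} h^p\Bigr)^{1/p}.
\]
Splitting the integrand $\dashint_{2Q_j} g$ into its parts on $\{g \le \eta\lambda\}$ and $\{g > \eta \lambda\}$ for a small parameter $\eta \in (0,1)$ to be chosen, one obtains, after absorbing the $\eta\lambda$ part into the left-hand side (which holds since $\dashint_{Q_j} g^p > \lambda^p$), a good-$\lambda$ bound of the form
\[
  \abs{\{x \in Q_j : g(x) > \lambda\}} \le \frac{C}{\lambda}\int_{\{g > \eta\lambda\} \cap 2 Q_j} g \,\dd x + \frac{C}{\lambda^p}\int_{2Q_j} h^p \,\dd x.
\]
Summing over $j$ using the bounded overlap of the doubled cubes yields the corresponding inequality for $Q_1$ in place of $Q_j$, i.e.\ a global distributional estimate controlling $\{g > \lambda\}$ by tail integrals of $g$ and $h$.

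Integrating this level-set inequality against $q \lambda^{q-p-1} \dd\lambda$ from $\lambda_0$ to infinity and applying Fubini gives, after a straightforward computation, the estimate
\[
  \int_{Q_1} g^q \le \frac{C(q-p)}{q-p - C(p, C_0)(q-p)} \Bigl(\int_{Q_0} g^p \,\lambda_0^{q-p} + \int_{Q_0} h^q\Bigr),
\]
valid provided $q-p$ is smaller than some $q_0 - p$ depending only on $n$, $p$, and $C_0$, so that the absorption is legitimate. Dividing by $\abs{\sigma Q_0}$ and bounding $\lambda_0^{q-p}$ by the right-hand side of the conclusion gives the claimed reverse Hölder inequality. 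The main obstacle is the book-keeping in the stopping-time step: one must choose the dyadic scale small enough to guarantee $2Q \subset Q_0$ for every selected cube while still capturing all of $\sigma Q_0$, and the constants produced in the absorption step must be tracked carefully to see that they depend only on $n$, $p$, $C_0$, and $\sigma$, as asserted.
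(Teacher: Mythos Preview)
The paper does not give its own proof of this proposition: it is quoted verbatim as \cite[Proposition 6.1]{Iwaniec_GehringLemma} and used as a black box in the preliminaries. So there is no ``paper's proof'' to compare against here.

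Your outline is the standard Gehring--Giaquinta--Modica argument and is essentially what is carried out in the cited reference, so in that sense your approach matches. Two places in your sketch are slightly loose. First, the phrase ``absorbing the $\eta\lambda$ part into the left-hand side'' is not quite how the step goes: one uses $\lambda^p |Q_j| < \int_{Q_j} g^p \le 2^n \lambda^p |Q_j|$ together with the hypothesis to obtain a bound on $|Q_j|$ (or on $\int_{Q_j} g^p$) in terms of $\int_{2Q_j \cap \{g > \eta\lambda\}} g$ and $\int_{2Q_j} h^p$, and the $\eta\lambda$ piece contributes a term $C_0 \eta \lambda$ that is absorbed into the lower bound $\lambda$ by choosing $\eta$ small. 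Second, the displayed fraction $\frac{C(q-p)}{q-p - C(p,C_0)(q-p)}$ cannot be right as written (it is independent of $q-p$); the actual absorption produces a factor of the form $\frac{1}{1 - C(n,p,C_0)(q-p)}$, which is finite precisely when $q < q_0 = p + C(n,p,C_0)^{-1}$ and explains the dependence of $q_0$ on the data. Neither of these affects the soundness of the overall strategy.
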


    \subsection{The Poincar\'e homotopy operator}

    In the following proposition, we recall necessary basic properties of the Poincar\'e homotopy operator of Iwaniec and Lutoborski \cite{Iwaniec-Lutoborski}.

    \begin{prop}[{\cite[Section 4]{Iwaniec-Lutoborski}}]    \label{prop:homotopy_operator}
    	Let $D \subset \R^n$ be a ball with radius $r>0$ or a cube with side length $r>0$. Then there exists a graded linear operator
    	\[
    	    T \colon \bigoplus_{k=1}^n L^1_\loc(\wedge^k T^*D) 
            \to \bigoplus_{k=1}^n L^1_\loc(\wedge^{k-1} T^*D)
    	\]
    	satisfying the following conditions for all $k \in \{1, \dots, n\}$:
    	\begin{enumerate} [label=(\roman*)]
    		\item \label{item:homotopy_continuity} For each $p \in [1, \infty]$, and each $\omega \in L^p(\wedge^k T^*D)$, we have
    		\[
    		      \norm{T \omega}_{L^{p}(D)} 
                \le C(n)r \norm{\omega}_{L^{p}(D)};
    		\]
    		\item \label{item:homotopy_compactness} For each $p \in [1, \infty]$, the operator $T$ is compact from $L^p(\wedge^k T^* D)$ to $L^p(\wedge^{k-1} T^* D)$;
    		\item \label{item:homotopy} If $p \in [1, \infty]$ and $\omega \in W^{d,p}(\wedge^k T^* D)$, then $T \omega \in W^{d,p}(\wedge^{k-1} T^* D)$ and $\omega = Td\omega + dT\omega$;
    		\item \label{item:homotopy_embedding}
            If $p \in [1, \infty]$, $q \in (1, \infty]$ are such that $q^{-1} \le p^{-1} + n^{-1}$ and $(p, q) \ne (\infty, n)$, 
            and if $\omega \in L^q(\wedge^k T^* D)$,
            then
    		\[
    		\norm{T\omega}_{L^p(D)} \le C(n, p, q) r^{\left(\frac{n}{p} + 1 - \frac{n}{q}\right)} \norm{\omega}_{L^q(D)}.
    		\]
    	\end{enumerate}
    \end{prop}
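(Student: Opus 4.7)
The plan is to construct $T$ explicitly via the averaged cone homotopy of Iwaniec and Lutoborski and then verify the four conclusions by direct estimation. For each $y \in D$ and each smooth $k$-form $\omega$ with $k \ge 1$, set
\[
    (K_y \omega)_x(v_1, \dots, v_{k-1}) = \int_0^1 t^{k-1} \omega_{y + t(x-y)}(x - y, v_1, \dots, v_{k-1})\, dt.
\]
A pointwise calculation along the segment from $y$ to $x$ yields the Cartan homotopy identity $d(K_y \omega) + K_y(d\omega) = \omega$, valid on the star-shaped domain $D$. Fix a nonnegative $\psi \in C_c^\infty(D)$ with $\int_D \psi = 1$ and $\norm{\psi}_\infty \le C(n) r^{-n}$, and set
\[
    T\omega(x) = \int_D \psi(y) \, (K_y \omega)(x)\, dy.
\]
Linearity is immediate, and integrating the Cartan identity in $y$ against $\psi$ gives \ref{item:homotopy} for smooth forms; density of smooth forms in $W^{d,p}$ extends \ref{item:homotopy} to the stated class once the $L^p$ bounds are in place.

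The $L^p$ estimates follow from a kernel reduction. Applying Fubini and substituting $z = y + t(x-y)$ for fixed $x$ in the inner integral (Jacobian $(1-t)^n$, absorbed into the $t$-integration against $t^{k-1}$) leads to a pointwise bound
\[
    \abs{T\omega(x)} \le C(n) \int_D \frac{\abs{\omega(z)}}{\abs{x-z}^{n-1}}\, dz,
\]
so that $\abs{T\omega}$ is controlled by a truncated Riesz potential of $\abs{\omega}$. Conclusion \ref{item:homotopy_continuity} then follows from the elementary estimate $\sup_{x \in D} \int_D \abs{x-z}^{1-n}\, dz \le C(n) r$ combined with Minkowski's integral inequality. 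Conclusion \ref{item:homotopy_embedding} follows from the Hardy--Littlewood--Sobolev inequality in the subcritical range $q^{-1} < p^{-1} + n^{-1}$ and from a direct Hölder estimate against the kernel $\abs{x-z}^{1-n}$ in the critical case $q^{-1} = p^{-1} + n^{-1}$; the excluded pair $(p,q) = (\infty, n)$ is exactly the point at which $\abs{x-z}^{1-n}$ fails to lie in the relevant Lebesgue space on $D$. The scaling factor $r^{n/p + 1 - n/q}$ is forced by the homogeneity of $T$ under dilations $x \mapsto r x$.

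For the compactness in \ref{item:homotopy_compactness}, the cleanest route is to first upgrade the estimate to $T \colon L^p(\wedge^k T^*D) \to W^{1,p}(\wedge^{k-1} T^*D)$ for $1 < p < \infty$. Differentiating under the integral sign decomposes $\nabla T\omega$ into a Calderón--Zygmund singular integral term, bounded on $L^p$ by classical theory, plus a harmless lower-order term arising from $\nabla \psi$, bounded by Young's inequality. The Rellich--Kondrachov theorem on the bounded Lipschitz domain $D$ then upgrades this to compactness $L^p \to L^p$. The endpoint cases $p \in \{1, \infty\}$ are handled separately by approximating $\omega$ in $L^p$ by mollifications, noting that the resulting family $\{T(\omega_\eps)\}$ is equicontinuous in view of the kernel bound, and applying Arzelà--Ascoli. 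The main obstacle throughout is the careful bookkeeping of constants and scaling, particularly for \ref{item:homotopy_embedding}, where the subcritical, critical non-endpoint, and excluded endpoint cases must each be treated on their own terms.
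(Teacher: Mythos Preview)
Your construction and the treatment of parts \ref{item:homotopy_continuity} and \ref{item:homotopy} match the paper, which largely cites Iwaniec--Lutoborski directly. For part \ref{item:homotopy_embedding} you take a different route: the paper instead invokes \cite[Proposition 4.1]{Iwaniec-Lutoborski} to place the coefficient functions of $T\omega$ in $W^{1,q}(D)$ for $1 < q < \infty$ and then applies the Sobolev--Poincar\'e inequality, handling $q = \infty$ separately via H\"older and part \ref{item:homotopy_continuity}. Your Riesz-potential approach is also valid in principle, though your description has the roles reversed: Hardy--Littlewood--Sobolev is what handles the \emph{critical} case $q^{-1} = p^{-1} + n^{-1}$ (precisely where $\abs{x-z}^{1-n}$ fails to lie in the Young exponent $L^{n/(n-1)}$), while the strict subcritical case follows from Young's inequality on the bounded domain.

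For part \ref{item:homotopy_compactness}, your treatment of $1 < p < \infty$ via $W^{1,p}$-regularity and Rellich--Kondrachov agrees with an observation the paper attributes to Bonk--Heinonen. However, your endpoint sketch has a genuine gap at $p = 1$. Mollifying $\omega$ does not produce a family $\{T\omega_\eps\}$ that is equicontinuous \emph{uniformly over the unit ball of $L^1$}: mollifications of $L^1$ functions have no uniform $L^\infty$ bound, and $\norm{\omega - \omega_\eps}_{L^1}$ does not tend to zero uniformly over that ball either. Moreover, Arzel\`a--Ascoli delivers uniform convergence, not $L^1$ convergence. The paper devotes an appendix to exactly this point: it extends $T$ to an operator on $\R^n$ via a cutoff, proves a kernel translation lemma showing $\sup_{x} \norm{\xi(\cdot, x-\cdot) - \xi(\cdot, x+h-\cdot)}_{L^\gamma(D)} \to 0$ as $h \to 0$ for all $\gamma < n/(n-1)$, and then invokes the Kolmogorov--Riesz--Fr\'echet compactness criterion for $1 \le p < \infty$, reserving Arzel\`a--Ascoli for $p = \infty$ only. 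A shorter fix compatible with your outline would be to approximate the \emph{kernel} rather than $\omega$: truncate the singularity of $\abs{x-z}^{1-n}$ at scale $\eps$ to get a bounded-kernel operator $T_\eps$ that is compact on every $L^p$, and show $\norm{T - T_\eps}_{L^p \to L^p} \to 0$.
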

    \begin{proof}
        Part \ref{item:homotopy_continuity} is \cite[(4.15)]{Iwaniec-Lutoborski}. For part \ref{item:homotopy_compactness}, see the discussion in Appendix \ref{sect:Poincare_homotopy_compactness}.
        Part \ref{item:homotopy} is \cite[Lemma 4.2]{Iwaniec-Lutoborski}. The statement of part \ref{item:homotopy} in \cite{Iwaniec-Lutoborski} excludes $p = \infty$, but if $\omega \in W^{d,\infty}(\wedge^k T^* D)$, then $\omega \in W^{d,1}(\wedge^k T^* D)$ since $D$ is bounded, we get $\omega = Td\omega + dT\omega$ a.e.\ by the case $p = 1$ of \ref{item:homotopy}, and it follows that $\norm{d T\omega}_{L^\infty(D)} \le \norm{Td\omega}_{L^\infty(D)} + \norm{\omega}_{L^\infty(D)} < \infty$ by \ref{item:homotopy_continuity}.

        For part \ref{item:homotopy_embedding}, case $q = \infty$ follows since 
        \[
            \norm{T\omega}_{L^p(D)} 
            \le C(n,p) r^{\frac{n}{p}}\norm{T\omega}_{L^\infty(D)} 
            \le C(n,p) r^{\frac{n}{p} + 1}\norm{\omega}_{L^\infty(D)}
        \]
        by H\"older's inequality and part \ref{item:homotopy_continuity}. If on the other hand $1 < q < \infty$, then \cite[Proposition 4.1]{Iwaniec-Lutoborski} yields that the coefficient functions $T_I\omega$ of $T\omega$ with respect to the standard basis of $\wedge^k (\R^n)^*$ are in $W^{1,q}(D)$, and satisfy 
        \[
            r^{-1}\norm{T\omega}_{L^q(D)} + \sum_I \norm{\nabla T_I \omega}_{L^q(D)} \le C(n, q) \norm{\omega}_{L^q(D)}.
        \]
        Note here the factor $(\diam D)^{-1}$ in the definition of the $\norm{\cdot}_{W^{1,p}(D)}$-norm in \cite[Chapter 3]{Iwaniec-Lutoborski}, which produces the $r^{-1}$ above. Since $q^{-1} \le p^{-1} + n^{-1}$ and $(p,q) \ne (\infty, n)$, the Sobolev-Poincar\'e inequality yields 
        \[
            \norm{T_I\omega - (T_I \omega)_D}_{L^p(D)} \le C(n,p,q) r^{\frac{n}{p} + 1 - \frac{n}{q}} \norm{\nabla T_I \omega}_{L^q(D)}.
        \]
        Since we have $\norm{T_I\omega}_{L^p(D)} \le \norm{T_I\omega - (T_I \omega)_D}_{L^p(D)} + \norm{(T_I \omega)_D}_{L^p(D)}$ and $\norm{(T_I \omega)_D}_{L^p(D)} \le C(n,p,q) r^{n/p - n/q} \norm{T_I \omega}_{L^q(D)}$, the claim follows.
        
    \end{proof}

    Moreover, for every $r > 0$, we use $T_r$ to denote a fixed Poincar\'e homotopy operator specifically on the ball $\B^n_r$.

    \subsection{Weak and vague convergence of $k$-forms}
    Given $p \in [1, \infty]$, and an open $\Omega \subset \R^n$, we say that a sequence $\omega_j \in L^p(\wedge^k T^* \Omega)$ converges \emph{$L^p$-weakly} to $\omega \in L^p(\wedge^k T^* \Omega)$, denoted $\omega_j \weakto \omega$, if
    \begin{equation}\label{eq:weak_vague_conv}
        \lim_{j \to \infty} \int_\Omega \omega_j \wedge \eta = \int_{\Omega} \omega \wedge \eta
    \end{equation}
    for all $\eta \in L^{p^*}(\wedge^{n-k} T^* \Omega)$. A careful reader notes that this is a slight abuse of terminology, as for instance for $p = \infty$, this convergence is technically weak*-convergence. We also say that a sequence $\omega_j \in L^1_\loc(\wedge^k T^* \Omega)$ converges \emph{vaguely} to $\omega \in L^1_\loc(\wedge^k T^* \Omega)$, also denoted $\omega_j \weakto \omega$, if \eqref{eq:weak_vague_conv} holds for all $\eta \in C_0(\wedge^{n-k} T^* \Omega)$. This terminology is derived from the standard vague convergence of measures. Note that $L^p$-weak convergence always implies vague convergence.
        
    We recall two basic facts about these convergence types. First, under suitable circumstances, we can check these types of convergence with $C^\infty_0$-regular test forms. The proof is an immediate consequence of $C^\infty_0(\wedge^{n-k} T^* \Omega)$ being dense in $L^q(\wedge^{n-k} T^* \Omega)$ if $q \in [1, \infty)$, and $C^\infty_0(\wedge^{n-k} T^* \Omega)$ being dense in $C_0(\wedge^{n-k} T^* \Omega)$ under the $L^\infty$-norm.
        
    \begin{lemma}\label{lem:smooth_to_weak_or_vague_conv}
    	Let $\Omega \subset \R^n$ be open, let $p \in [1, \infty]$, let $\omega, \omega_j \in L^p(\wedge^k T^* \Omega)$ for $j \in \Z_{>0}$, and suppose that $(\norm{\omega_j}_{L^p(\Omega)})$ is bounded. If
       	\[
           	\lim_{j \to \infty} \int_\Omega \omega_j \wedge \eta = \int_\Omega \omega \wedge \eta
        \]
       	for all $\omega \in C^\infty_0(\wedge^{n-k} T^* \Omega)$, then $\omega_j \weakto \omega$ in the $L^p$-weak sense if $p > 1$, and $\omega_j \weakto \omega$ vaguely if $p = 1$.
    \end{lemma}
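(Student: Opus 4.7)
The plan is a standard three-epsilon argument, exploiting the uniform bound $M := \sup_j \norm{\omega_j}_{L^p(\Omega)} < \infty$ together with density of $C^\infty_0$ in the appropriate test-form space. Throughout, I will use the pointwise estimate $\smallabs{\alpha \wedge \beta} \le C(n) \smallabs{\alpha} \smallabs{\beta}$ for forms of complementary degree, together with H\"older's inequality, so that for any $\omega' \in L^p(\wedge^k T^* \Omega)$ and $\eta \in L^{p^*}(\wedge^{n-k} T^* \Omega)$ we have
\[
    \left\lvert \int_\Omega \omega' \wedge \eta \right\rvert \le C(n) \norm{\omega'}_{L^p(\Omega)} \norm{\eta}_{L^{p^*}(\Omega)}.
\]

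First I would dispose of the case $p \in (1, \infty]$, where $p^* < \infty$. Fix an arbitrary $\eta \in L^{p^*}(\wedge^{n-k} T^* \Omega)$, and let $\delta > 0$. By density of $C^\infty_0(\wedge^{n-k} T^*\Omega)$ in $L^{p^*}(\wedge^{n-k} T^*\Omega)$ (valid precisely because $p^* < \infty$), pick $\eta_\delta \in C^\infty_0(\wedge^{n-k} T^*\Omega)$ with $\norm{\eta - \eta_\delta}_{L^{p^*}(\Omega)} < \delta$. Writing $\int_\Omega (\omega_j - \omega) \wedge \eta$ as $\int_\Omega \omega_j \wedge (\eta - \eta_\delta) + \int_\Omega (\omega_j - \omega) \wedge \eta_\delta + \int_\Omega \omega \wedge (\eta_\delta - \eta)$ and applying the H\"older estimate, the first and third terms are bounded by $C(n)(M + \norm{\omega}_{L^p(\Omega)})\delta$ uniformly in $j$, while the middle term tends to $0$ as $j \to \infty$ by hypothesis. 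Taking $\delta \to 0$ yields the required convergence.

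For the remaining case $p = 1$, one cannot appeal to density of $C^\infty_0$ in $L^\infty$, so instead I would approximate an arbitrary $\eta \in C_0(\wedge^{n-k} T^* \Omega)$ uniformly by smooth compactly supported forms. Concretely, fix a compact neighborhood $K$ of $\spt \eta$ contained in $\Omega$, and mollify the coefficient functions of $\eta$ coordinate-wise at a small enough scale so that the resulting $\eta_\delta \in C^\infty_0(\wedge^{n-k} T^*\Omega)$ is supported in $K$ and satisfies $\norm{\eta - \eta_\delta}_{L^\infty(\Omega)} < \delta$. Splitting $\int_\Omega (\omega_j - \omega) \wedge \eta$ into the same three terms as before, the first and third are now bounded by $C(n)(M + \norm{\omega}_{L^1(\Omega)}) \delta$ (using the $L^1$-$L^\infty$ H\"older pairing and the fact that both $\eta$ and $\eta_\delta$ are supported in $K$), and the middle term again vanishes as $j \to \infty$ by hypothesis.

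I do not expect any substantive obstacle; the argument is routine. The only point requiring mild care is ensuring in the $p = 1$ case that the mollified test forms remain compactly supported inside $\Omega$, which is handled by mollifying at a scale smaller than $\dist(\spt \eta, \partial \Omega)$ and using the neighborhood $K$ to absorb the support of all $\eta_\delta$.
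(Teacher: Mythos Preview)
Your proposal is correct and follows exactly the approach indicated in the paper, which simply notes that the result is an immediate consequence of $C^\infty_0(\wedge^{n-k} T^* \Omega)$ being dense in $L^{p^*}(\wedge^{n-k} T^* \Omega)$ when $p^* < \infty$, and dense in $C_0(\wedge^{n-k} T^* \Omega)$ under the $L^\infty$-norm. Your three-epsilon argument is the standard way to unpack this density statement, and the care you take with supports in the $p=1$ case is appropriate.
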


    The other key fact we recall is that $L^1$-integrable vague limits, and consequently also $L^p$-weak limits, are unique. We provide the proof for the convenience of the reader.
        
    \begin{lemma}\label{lem:vague_limits_unique}
    	Let $\Omega \subset \R^n$ be open, let $\omega_j \in L^1_\loc(\wedge^k T^* \Omega)$ for $j \in \Z_{>0}$, and let $\omega, \omega' \in L^1_\loc(\wedge^k T^* \Omega)$ be vague limits of $(\omega_j)$. Then $\omega = \omega'$ a.e.\ in $\Omega$. 
    \end{lemma}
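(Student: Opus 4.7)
The plan is to reduce the statement to the standard fact that an $L^1_\loc$-function whose integral against every continuous compactly supported test function vanishes must be zero almost everywhere. First I would set $\alpha = \omega - \omega' \in L^1_\loc(\wedge^k T^* \Omega)$, and observe that by linearity of the integral, the vague convergence hypothesis immediately gives
\[
    \int_\Omega \alpha \wedge \eta = 0 \qquad \text{for every } \eta \in C_0(\wedge^{n-k} T^* \Omega).
\]
Thus the task reduces to showing that the only such form $\alpha$ is the zero form.

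Next I would expand $\alpha$ in the standard basis of $k$-covectors on $\R^n$, writing $\alpha = \sum_{|I|=k} \alpha_I \, dx_I$ with coefficient functions $\alpha_I \in L^1_\loc(\Omega)$. For each multi-index $I$ of size $k$, let $I^c$ denote the complementary increasing multi-index of size $n-k$, and let $\sigma(I) \in \{-1, +1\}$ be the sign determined by $dx_I \wedge dx_{I^c} = \sigma(I) \, dx_1 \wedge \cdots \wedge dx_n$. For any $\psi \in C_0(\Omega)$, the form $\eta = \psi \, dx_{I^c}$ lies in $C_0(\wedge^{n-k} T^* \Omega)$, and since $dx_J \wedge dx_{I^c} = 0$ for every $k$-index $J \neq I$, we get the pointwise identity $\alpha \wedge \eta = \sigma(I) \alpha_I \psi \, dx_1 \wedge \cdots \wedge dx_n$. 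Hence
\[
    \int_\Omega \alpha_I \psi = 0 \qquad \text{for every } \psi \in C_0(\Omega).
\]

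Finally, I would invoke the standard fact that an $L^1_\loc$-function $\alpha_I$ annihilating every compactly supported continuous test function must vanish almost everywhere in $\Omega$; this follows from approximating indicator functions of compact sets by elements of $C_0(\Omega)$ (or directly from the Lebesgue differentiation theorem applied to the signed measure $\alpha_I \, dx$ restricted to compact subsets). Since this holds for each $I$, all coefficients of $\alpha$ vanish a.e., so $\omega = \omega'$ a.e.\ in $\Omega$. There is no real obstacle here; the argument is elementary and the only care needed is in matching signs when pairing $dx_I$ with its complement.
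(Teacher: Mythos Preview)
Your proof is correct and follows essentially the same approach as the paper: both first observe that $\int_\Omega (\omega-\omega')\wedge\eta = 0$ for all $\eta \in C_0(\wedge^{n-k}T^*\Omega)$, then test against forms of the type $\psi\,\eps_I$ with $\eps_I$ a basis $(n-k)$-covector to isolate coefficients, and finally appeal to Lebesgue differentiation. The only cosmetic difference is that you reduce to the scalar statement and cite it as a standard fact, whereas the paper spells out the approximation of characteristic functions of balls by $C_0$-functions explicitly.
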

    \begin{proof}
        By vague convergence, we have
        \[
        	\int_\Omega (\omega - \omega') \wedge \eta = \lim_{j \to \infty} \int_\Omega (\omega_j - \omega_j) \wedge \eta = 0
        \]
        for all $\eta \in C_0(\wedge^{n-k} T^* \Omega)$. Now, suppose that $x \in \Omega$ is a Lebesgue point of $\omega - \omega'$. For every $r > 0$ such that $\overline{\B}^n(x, r) \subset \Omega$, we use $\mathcal{X}_{r}$ to denote the characteristic function of $\overline{\B}^n(x, r)$. For every unit $(n-k)$-covector $\eps_I \in \wedge^{n-k} (\R^n)^*$, we approximate $\mathcal{X}_{r} \eps_I$ with functions $\eta_{r, j} \eps_I$ such that $\eta_{r, j} \le 1$, $\eta_{r, j} \equiv 1$ on $\overline{\B}^n(x, r)$, and $\spt (\eta_{r,j}) \subset \B^n(x, r + j^{-1})$. 
        	
        Then, noting that $\B^n(x, r + j^{-1}) \subset \Omega$ for large enough $j$, we have
        \begin{multline*}
        	\limsup_{j \to \infty} \abs{ \int_\Omega (\omega - \omega') 
        		\wedge (\mathcal{X}_{r} - \eta_{r,j}) \eps_I}\\ 
        	\le \limsup_{j \to \infty} 
                \norm{\omega - \omega'}_{L^1(\B^n(x, r + j^{-1}) 
                    \setminus \B^n(x, r))}
        	= 0
        	\end{multline*}
        since $\omega - \omega' \in L^1_\loc(\Omega)$. Thus, since $\eta_{r, j} \eps_I \in C_0(\wedge^{n-k} T^* \Omega)$ for large $j$, we obtain that
        \[
        	\int_{\B^n(x, r)} (\omega - \omega') \wedge \eps_I = 0.
        \]
        Since this holds for all sufficiently small $r > 0$ and all unit $\eps_I \in \wedge^{n-k} (\R^n)^*$, it follows that $(\omega - \omega')_x = 0$ by the Lebesgue differentiation theorem. And as this holds for a.e.\ $x \in \Omega$, the claim follows.
    \end{proof}

    \subsection{The K\"unneth ideal}

    In general, a closed form whose de Rham class is in the K\"unneth ideal of a smooth manifold can be expressed as a sum of products of closed forms up to an additive exact term. The following result states that the additive exact term can be omitted on a closed, oriented Riemannian manifold.

    \begin{lemma} \label{lem:kunneth}
        Let $m\ge n\ge 2$ and let $M$ be a closed, oriented Riemannian $m$-manifold. If $\omega \in C^\infty(\wedge^n T^* M)$ is a closed $n$-form on $M$ whose de Rham class $[\omega]$ is in the K\"unneth ideal $K^*(M)$, then there exist $k_1,\ldots,k_j \in \{1,\ldots,n-1\}$, closed forms $\alpha_1,\ldots,\alpha_j \in C^\infty(\wedge^* T^* M)$, and closed forms $\beta_1,\ldots,\beta_j \in C^\infty(\wedge^* T^* M)$ satisfying $\omega = \sum_{i=1}^j \alpha_i \wedge \beta_i$ and $\alpha_i \in C^\infty(\wedge^{k_i} T^* M)$ for $i=1,\ldots,j$.
    \end{lemma}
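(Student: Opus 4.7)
The plan is to start from the K\"unneth hypothesis, reduce the lemma to absorbing a residual exact term, and then rewrite that exact term as a sum of wedges of the form (closed $1$-form) $\wedge$ (closed $(n-1)$-form). By the definition of $K^n(M)$, there exist closed forms $a_i \in C^\infty(\wedge^{k_i} T^*M)$ and $b_i \in C^\infty(\wedge^{n-k_i} T^*M)$ with $k_i \in \{1,\dots,n-1\}$ such that $[\omega] = \sum_{i=1}^{j_0}[a_i] \wedge [b_i]$ in $H_{\derham}^*(M)$. Since $\omega$ and the closed $n$-form $\sum_i a_i \wedge b_i$ share the same de Rham class, they differ by an exact form; that is, $\omega = \sum_{i=1}^{j_0} a_i \wedge b_i + d\gamma$ for some $\gamma \in C^\infty(\wedge^{n-1} T^*M)$.

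The main step is to realize $d\gamma$ as a sum of wedge products of closed forms whose degrees lie in $\{1,\dots,n-1\}$. For this I would exploit that $M$ is a closed smooth manifold: by Whitney's embedding theorem (or, more elementarily, by a partition-of-unity construction using a finite coordinate cover), there exist smooth functions $f^1,\dots,f^L \in C^\infty(M)$ whose differentials $df^1|_x,\dots,df^L|_x$ span $T^*_x M$ at every $x \in M$. One may simply take the $f^i$ to be the coordinate functions of any smooth embedding $M \hookrightarrow \R^L$. Consequently, the $(n-1)$-fold wedges $df^I := df^{i_1} \wedge \dots \wedge df^{i_{n-1}}$ with $1 \le i_1 < \dots < i_{n-1} \le L$ span $\wedge^{n-1} T^*_x M$ pointwise, and a standard bundle argument produces smooth coefficient functions $h_I \in C^\infty(M)$ satisfying $\gamma = \sum_I h_I\, df^I$.

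Taking the exterior derivative, and using that each $df^{i_\ell}$ is closed so that $d(df^I) = 0$, we obtain $d\gamma = \sum_I dh_I \wedge df^I$. Here $dh_I$ is a closed $1$-form and $df^I$ is a closed $(n-1)$-form, and since $n \ge 2$, both $1$ and $n-1$ lie in $\{1,\dots,n-1\}$. Combining this with the K\"unneth decomposition gives
\[
    \omega \;=\; \sum_{i=1}^{j_0} a_i \wedge b_i \;+\; \sum_I dh_I \wedge df^I,
\]
which is precisely the required representation.

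The only non-algebraic ingredient, and the step I expect to require the most care, is the smooth global decomposition $\gamma = \sum_I h_I\, df^I$. The evaluation bundle morphism $M \times \R^{\binom{L}{n-1}} \to \wedge^{n-1} T^*M$, $(x, (h_I)) \mapsto \sum_I h_I\, df^I|_x$, is a fiberwise surjection of constant rank $\binom{m}{n-1}$, so its kernel is a smooth subbundle, and choosing any smooth complement (for instance the orthogonal complement under the standard inner product on $\R^{\binom{L}{n-1}}$) yields a smooth right inverse from which the coefficients $h_I$ are read off; alternatively, one can pick local representations on each chart of a finite cover of $M$ and splice them with a partition of unity. Once this decomposition is in hand, the remainder of the argument is a one-line application of the Leibniz rule.
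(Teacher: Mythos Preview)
Your proof is correct and follows essentially the same strategy as the paper's: both reduce the lemma to expressing the residual exact term $d\tau$ as a sum of wedges of closed forms of intermediate degree. The paper handles that step by citing \cite[Proposition 2.8]{Hajlasz-Iwaniec-Maly-Onninen}, whereas you supply an explicit proof via a Whitney embedding and the resulting global frame of exact $1$-forms---which is in fact the standard argument underlying that cited proposition.
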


    \begin{proof}
        Since the de Rham class $[\omega]$ belongs to the K\"unneth ideal $K^*(M)$, there exist $\tau \in C^\infty(\wedge^{n-1} T^* M)$, $k_1,\ldots,k_l \in \{1,\ldots,n-1\}$, closed forms $\alpha_1,\ldots,\alpha_l \in C^\infty(\wedge^* T^* M)$, and closed forms $\beta_1,\ldots,\beta_l \in C^\infty(\wedge^* T^* M)$ satisfying $\omega = \left( \sum_{i=1}^l \alpha_i \wedge \beta_i \right) + d\tau$ and $\alpha_i \in C^\infty(\wedge^{k_i} T^* M)$ for $i=1,\ldots,l$. On the other hand, by \cite[Proposition 2.8]{Hajlasz-Iwaniec-Maly-Onninen}, there exist $k_{l+1},\ldots,k_j \in \{1,\ldots,n-1\}$, closed forms $\alpha_{l+1},\ldots,\alpha_j \in C^\infty(\wedge^* T^* M)$, and closed forms $\beta_{l+1},\ldots,\beta_j \in C^\infty(\wedge^* T^* M)$ for which $d\tau=\sum_{i=l+1}^j \alpha_i \wedge \beta_i$ and $\alpha_i \in C^\infty(\wedge^{k_i} T^* M)$ for $i=l+1,\ldots,j$. This concludes the proof.
    \end{proof}

    \section{Proof of Theorem \ref{thm:QRcurve_with_sigma_embedding}}
    \label{sect:proof_of_cohom_embedding}

    In this section, we provide the main new content of the proof of Theorem \ref{thm:QRcurve_with_sigma_embedding}. The proof of a single result, Proposition \ref{prop:technical_abstraction}, is postponed to the end of the paper.

    \subsection{Abstraction of the main argument of \cite{Heikkila-Pankka_Elliptic} and \cite{Heikkila_Embedding-curves}}\label{subsect:abstraction}

    The core part of our strategy for proving Theorem \ref{thm:QRcurve_with_sigma_embedding} closely follows that of \cite{Heikkila-Pankka_Elliptic} and \cite{Heikkila_Embedding-curves}. If we were to write the argument similarly as in these prior works, a large portion of the proof would end up being repetition of the existing argument, where the changes caused by replacing \eqref{eq:QRcurve_def} with \eqref{eq:QR_curve_with_Sigma} would not play any role. 
    
    For this reason, we proceed to state a technical proposition which isolates the abstract $L^p$-theoretical core of the argument from \cite{Heikkila-Pankka_Elliptic} and \cite{Heikkila_Embedding-curves}. The result is stated in relatively high generality, in hopes of reducing the need for these steps to be repeated in any future work.

    We begin by fixing some terminology. Let $B \subset \R^n$ be an open ball, and let $M$ be a closed, connected, oriented Riemannian $m$-manifold, where $m \ge n \ge 2$. We use $1_M$ and $1_B$ to denote the map $x \mapsto 1$ on $M$ and $B$, respectively.
    
    We say that a sequence of exponents $p_0, p_1, \dots, p_n \in [1, \infty]$ is a \emph{H\"older sequence} if $p_k^{-1} + p_l^{-1} \le p_{k+l}^{-1}$ for all $k, l \in \{0, \dots, n\}$ satisfying $k+l\le n$. We note that this property implies $p_{k+1} \le p_k$ for all $k \in \{0, \dots, n-1\}$, $p_k^* \le p_{n-k}$ for all $k \in \{0, \dots, n\}$, and $p_0 = \infty$. 
    Our reason for this definition is that if $p_0, \dots, p_n \in [1, \infty]$ is a H\"older sequence, then
    \begin{equation}\label{eq:Holder_seq_est}
    	\norm{\varphi \psi}_{L^{p_{k+l}}(B)} 
    		\le C(B) \norm{\varphi}_{L^{p_{k}}(B)} \norm{\psi}_{L^{p_{l}}(B)}
    \end{equation}
    for all $\varphi, \psi \in L^1_\loc(B)$, which in turn implies that
    \[
    	L^{p_0, \dots, p_n}(\wedge^* T^* B) := \bigoplus_{k=0}^n L^{p_k}(\wedge^k T^* B)
    \]
    is a graded unital algebra when equipped with the wedge product and unit $1_B$. 
    
    Similarly, for a H\"older sequence $p_0, \dots, p_n \in [1, \infty]$, we define
    \[
        L^{p_0, \dots, p_n}(\wedge^* T^* B)  \cap \ker(d) := \bigoplus_{k=0}^n L^{p_k}(\wedge^k T^* B) \cap \ker(d),
    \]
    where we use $L^{p_k}(\wedge^k T^* B) \cap \ker(d)$ to denote the space of weakly closed $k$-forms in $L^{p_k}(\wedge^k T^* B)$ for all $k \in \{0, \dots, n\}$. We observe that the space $L^{p_0, \dots, p_n}(\wedge^* T^* B)  \cap \ker(d)$ is closed under the wedge product, and is hence a graded unital subalgebra of $L^{p_0, \dots, p_n}(\wedge^* T^* B)$.
    
    Then, given a H\"older sequence $p_0, \dots, p_n \in [1, \infty]$, we define that a map $G \colon C^\infty(\wedge^* T^* M) \to L^{p_0, \dots, p_n}(\wedge^* T^* B)$ is a \emph{generalized pull-back} if $G$ is a graded linear map satisfying $G(1_M) = 1_B$, $G(\alpha \wedge \beta) = G(\alpha) \wedge G(\beta)$, and $G(d\alpha) = dG(\alpha)$ in the weak sense for all $\alpha, \beta \in C^\infty(\wedge^* T^* M)$. 
    
    With these definitions, we are ready to state our technical abstraction of the argument from \cite{Heikkila-Pankka_Elliptic} and \cite{Heikkila_Embedding-curves}.

    \begin{prop}\label{prop:technical_abstraction}
    	Let $B \subset \R^n$ be an open ball, and let $M$ be a closed, connected, oriented Riemannian $m$-manifold, where $m \ge n \ge 2$. Suppose that $p_0, \dots, p_{n-1} \in (1, \infty]$, $p_n \in [1, \infty]$ is a H\"older sequence of exponents.
        \begin{enumerate}[label=(\roman*)]
            \item \label{enum:cohom_map_into_forms}
            Suppose that $G_j \colon C^\infty(\wedge^* T^* M) \to L^{p_0, \dots, p_n}(\wedge^* T^* B)$ are generalized pull-backs for all $j \in \Z_{> 0}$, $A_j \in (0, \infty)$ with $\lim_{j \to \infty} A_j = \infty$, and $C \in (0, \infty)$ is a constant for which
            \begin{equation}\label{eq:Aj_condition}
    		      \norm{G_j(\alpha)}_{L^{p_k}(B)} \le C A_j^\frac{k}{n} \norm{\alpha}_{L^\infty(M)}
    	    \end{equation}
    	    for all $k \in \{0, \dots, n\}$, $j \in \Z_{> 0}$, and $\alpha \in C^\infty(\wedge^k T^* M)$. Then there exists a subsequence $(G_{j_i})$ of $(G_j)$ and a graded algebra homomorphism 
    	    \[
    		      L \colon H^*_\derham(M) 
                \to L^{p_0, \dots, p_n}(\wedge^* T^* B) \cap \ker(d)
    	    \]
    	    such that for all $k \in \{0, \dots, n\}$, $c \in H^k_\derham(M)$, and $\omega \in c$, if $p_k > 1$ or if $c$ is in the $n$:th layer $K^n(M)$ of the K\"unneth ideal of $M$, then
            \[
    		      A_{j_i}^{-k/n} G_{j_i} (\omega) \weakto L(c).
    	    \]
    	    Here, the above convergence is $L^{p_k}$-weak convergence if $p_k > 1$ and vague convergence if $p_k = 1$.
            \item \label{enum:cohom_map_into_ext_alg}
            If $L \colon H^*_\derham(M) \to L^{p_0, \dots, p_n}(\wedge^* T^* B) \cap \ker(d)$ is a graded algebra homomorphism, and if $c \in H^k_\derham(M)$, $k \in \{0, \dots, n\}$, is such that $L(c)$ is not a.e.\ vanishing, then there exists a graded homomorphism of algebras $\Phi \colon H^*_\derham(M) \to \wedge^* \R^n$ for which $\Phi(c) \ne 0$. Moreover, if $k = n = m$, then $\Phi$ is injective.
        \end{enumerate}
    \end{prop}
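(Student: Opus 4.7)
The plan is to handle the two parts in sequence, starting with the construction of $L$ in part \ref{enum:cohom_map_into_forms}. Since $M$ is closed, $H^*_\derham(M)$ is finite-dimensional, so I would fix smooth closed representatives $\alpha_1, \dots, \alpha_N$ of a graded basis, with $\alpha_\ell$ of degree $d_\ell$. The norm bound \eqref{eq:Aj_condition} makes each sequence $A_j^{-d_\ell/n} G_j(\alpha_\ell)$ bounded in $L^{p_{d_\ell}}(\wedge^{d_\ell} T^* B)$, so a diagonal argument using Banach--Alaoglu (for $p_{d_\ell} \in (1,\infty]$) and vague compactness (which is only needed when $p_{d_\ell} = 1$, forcing $d_\ell = n$ by hypothesis) produces a common subsequence along which all these converge to limits $\omega_\ell$. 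Set $L([\alpha_\ell]) := \omega_\ell$ whenever $p_{d_\ell} > 1$ or $[\alpha_\ell] \in K^n(M)$, and extend $L$ arbitrarily (say by zero) on a complement of $K^n(M)$ inside $H^n_\derham(M)$. Well-definedness on cohomology classes and the identity $dL(c) = 0$ both reduce to the observation that $\lVert A_j^{-d_\ell/n} G_j(\eta)\rVert_{L^{p_{d_\ell - 1}}} \le C A_j^{-1/n} \lVert \eta \rVert_\infty \to 0$ for smooth $\eta$, combined with weak/vague continuity of the exterior derivative.

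The main obstacle is proving $L(c_1 \cup c_2) = L(c_1) \wedge L(c_2)$; here I would follow the strategy of \cite{Heikkila-Pankka_Elliptic, Heikkila_Embedding-curves} using the Iwaniec--Lutoborski homotopy operator $T_r$ on $B = \B^n_r$. Given closed smooth $\alpha_i \in c_i$ of degrees $k$ and $l$ with $k + l \le n$, the form $G_j(\alpha_1)$ is weakly closed, hence $G_j(\alpha_1) = dT_r G_j(\alpha_1)$ by Proposition \ref{prop:homotopy_operator}\ref{item:homotopy}. Testing against a compactly supported smooth $(n-k-l)$-form $\eta$ and integrating by parts using $dG_j(\alpha_2) = 0$ gives
\[
    \int_B G_j(\alpha_1) \wedge G_j(\alpha_2) \wedge \eta
    = (-1)^k \int_B T_r G_j(\alpha_1) \wedge G_j(\alpha_2) \wedge d\eta.
\]
After dividing by $A_j^{(k+l)/n}$, compactness of $T_r$ (Proposition \ref{prop:homotopy_operator}\ref{item:homotopy_compactness}) upgrades $A_j^{-k/n} G_j(\alpha_1) \weakto L(c_1)$ to strong convergence $A_j^{-k/n} T_r G_j(\alpha_1) \to T_r L(c_1)$ in $L^{p_k}$ along a further subsequence, while $A_j^{-l/n} G_j(\alpha_2) \weakto L(c_2)$ in $L^{p_l}$. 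The resulting strong-times-weak pairing (whose integrability is controlled by the Hölder sequence estimate $p_k^{-1} + p_l^{-1} \le 1$) identifies the limit, and integrating by parts in reverse via $dT_r L(c_1) = L(c_1)$ and $dL(c_2) = 0$ yields $\int_B L(c_1) \wedge L(c_2) \wedge \eta$. Combined with Lemma \ref{lem:smooth_to_weak_or_vague_conv} and Lemma \ref{lem:vague_limits_unique}, this gives $L(c_1 \cup c_2) = L(c_1) \wedge L(c_2)$ almost everywhere. The remaining K\"unneth case (when $p_n = 1$ and $c \in K^n(M)$) follows by decomposing a representative $\omega$ via Lemma \ref{lem:kunneth} into wedges $\alpha_i \wedge \beta_i$ of intermediate degrees, each handled by the same strong-times-weak argument since $p_{k_i}, p_{n-k_i} > 1$ for $k_i \in \{1, \dots, n-1\}$.

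For part \ref{enum:cohom_map_into_ext_alg}, identify $T^*_x B \cong \R^n$ via the standard coordinates of $B \subset \R^n$. Since $H^*_\derham(M)$ is finite-dimensional and $L(c_1 \cup c_2) = L(c_1) \wedge L(c_2)$ a.e.\ for every pair of basis classes, select a single point $x \in B$ that is simultaneously a Lebesgue point of $L(c_\alpha)$ for every basis element $c_\alpha$, satisfies the pointwise algebra identity for every pair, and has $L(c)_x \ne 0$; such $x$ form a set of full measure since the hypothesis ensures $L(c)$ is not a.e.\ vanishing. Setting $\Phi(c_\alpha) := L(c_\alpha)_x$ and extending linearly produces a graded algebra homomorphism $\Phi \colon H^*_\derham(M) \to \wedge^* \R^n$ with $\Phi(c) \ne 0$. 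For the injectivity claim when $k = n = m$: since $M$ is closed, connected and oriented, $\dim H^n_\derham(M) = 1$, so $c$ spans $H^n_\derham(M)$; by Poincar\'e duality, every nonzero $c' \in H^q_\derham(M)$ admits a partner $c'' \in H^{n-q}_\derham(M)$ with $c' \cup c'' = \lambda c$ for some $\lambda \ne 0$, so $\Phi(c') \wedge \Phi(c'') = \lambda \Phi(c) \ne 0$, forcing $\Phi(c') \ne 0$.
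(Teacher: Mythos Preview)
Your proposal is correct and follows essentially the same strategy as the paper's Section~\ref{sect:abstraction_proof}: Banach--Alaoglu to extract weak limits, compactness of the Poincar\'e homotopy operator $T_r$ to upgrade one factor to strong $L^{p_k}$-convergence for the strong-times-weak product argument, the K\"unneth decomposition (Lemma~\ref{lem:kunneth}) to handle degree $n$ when $p_n=1$, and pointwise evaluation plus Poincar\'e duality for part~\ref{enum:cohom_map_into_ext_alg}. The paper packages the strong limits as ``Sobolev--Poincar\'e limits'' $\widehat{L}_k$ and builds $L_n|_{K^n(M)}$ directly from wedges of lower-degree limits rather than first invoking vague compactness, but the substance is identical; the only points your sketch leaves implicit are the trivial $k=0$ factor case (where $T_r$ is unavailable but $G_j(a_M)=a_B$ suffices) and the observation that your vague limit $\omega_\ell$ for $[\alpha_\ell]\in K^n(M)$ is a priori only a measure, with its $L^1$-membership established a posteriori by the K\"unneth identification.
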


    We postpone the proof of Proposition \ref{prop:technical_abstraction} to Section \ref{sect:abstraction_proof} at the end of the article.

    We then outline how the prior known embedding results follow from Proposition \ref{prop:technical_abstraction}, and how the argument differs in our setting. Notably, the statement of Proposition \ref{prop:technical_abstraction} is general enough that it also implies the corresponding embedding result for Lipschitz maps of positive asymptotic degree in the equidimensional case; see \cite[Theorem 2.3]{Berdnikov-Guth-Manin} or \cite[Section 2]{Manin-Prywes}. As this is the simplest known case to demonstrate the use of Proposition \ref{prop:technical_abstraction}, we recall the statement and sketch a proof using the proposition.

    \begin{thm}[{Special case of \cite[Theorem 2.3]{Berdnikov-Guth-Manin}}]\label{thm:Berdnikov-Guth-Manin}
        Let $M$ be a closed, connected, oriented Riemannian $n$-manifold. Suppose that there exists an $L$-lipschitz map $f \colon \R^n \to M$ with
        \[
            \limsup_{r \to \infty} \frac{1}{r^n} \int_{\B_r^n} f^* \vol_M > 0.
        \]
        Then there exists a graded embedding of algebras $\Phi \colon H^*_\derham(M) \hookrightarrow \wedge^* \R^n$
    \end{thm}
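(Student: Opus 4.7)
The plan is to apply Proposition \ref{prop:technical_abstraction} to a sequence of rescalings of $f$, with the volume class $[\vol_M] \in H^n_\derham(M)$ playing the role of the witnessing cohomology class. Since $M$ is $n$-dimensional and $[\vol_M]$ sits in top degree, part \ref{enum:cohom_map_into_ext_alg} of the proposition will automatically upgrade nontriviality of the abstract limit to an honest injective embedding into $\wedge^* \R^n$.

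First, use the positive asymptotic degree hypothesis to pass to a subsequence $r_j \to \infty$ along which
\[
    c := \lim_{j \to \infty} r_j^{-n} \int_{\B_{r_j}^n} f^* \vol_M
\]
is a strictly positive real number. Set $B = \B_1^n$, take all exponents $p_0 = p_1 = \cdots = p_n = \infty$, and set $A_j = (Lr_j)^n$. Define rescaled maps $f_j \colon B \to M$ by $f_j(x) = f(r_j x)$; each $f_j$ is $Lr_j$-Lipschitz, so the pull-back $G_j(\alpha) := f_j^* \alpha$ satisfies $\abs{G_j(\alpha)(x)} \le (Lr_j)^k \norm{\alpha}_{L^\infty(M)}$ a.e.\ on $B$ by \eqref{eq:pull-back_pointw_est}, verifying hypothesis \eqref{eq:Aj_condition} with a constant $C$ depending only on $L$ and $n$. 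The identities $G_j(\alpha \wedge \beta) = G_j(\alpha) \wedge G_j(\beta)$ and $dG_j(\alpha) = G_j(d\alpha)$ making $G_j$ a generalized pull-back are standard for Lipschitz (hence Sobolev) maps on smooth forms.

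Proposition \ref{prop:technical_abstraction}\ref{enum:cohom_map_into_forms} then produces a further subsequence and a graded algebra homomorphism $L \colon H^*_\derham(M) \to L^{\infty,\ldots,\infty}(\wedge^* T^* B) \cap \ker(d)$ with $A_{j_i}^{-1} f_{j_i}^* \vol_M \weakto L([\vol_M])$ in the weak-$L^\infty$ sense; note that $p_n = \infty > 1$, so no K\"unneth hypothesis is needed. Testing this convergence against $\mathbf{1}_B \in L^1(B)$ and using the change of variables $y = r_{j_i} x$ yields
\[
    \int_B L([\vol_M]) = \lim_{i \to \infty} L^{-n} r_{j_i}^{-n} \int_{\B_{r_{j_i}}^n} f^* \vol_M = L^{-n} c > 0,
\]
so $L([\vol_M])$ is not a.e.\ zero. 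Applying Proposition \ref{prop:technical_abstraction}\ref{enum:cohom_map_into_ext_alg} with the class $[\vol_M]$ and $k = n = m$ then yields a graded algebra homomorphism $\Phi \colon H^*_\derham(M) \to \wedge^* \R^n$ with $\Phi([\vol_M]) \ne 0$; injectivity of $\Phi$ is automatic in the equidimensional case $k = n = m$, producing the desired embedding.

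The main obstacle is essentially absent, as all the substantive analytic work is performed by Proposition \ref{prop:technical_abstraction}. The only step that requires a moment's thought is bridging the asymptotic degree hypothesis, which a priori only controls a scalar integral over large balls, to nontriviality of the abstract weak limit $L([\vol_M])$; pairing the weak-$L^\infty$ convergence with the $L^1$ test function $\mathbf{1}_B$ and applying a single rescaling change of variables handles this.
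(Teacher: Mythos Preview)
Your proof is correct and follows essentially the same approach as the paper's own sketch: rescale $f$ onto the unit ball, take $p_0=\cdots=p_n=\infty$, apply Proposition~\ref{prop:technical_abstraction}\ref{enum:cohom_map_into_forms} with $A_j$ proportional to $r_j^n$, and then test the weak-$L^\infty$ limit against the constant function $\mathbf{1}_B$ to see $L([\vol_M])\ne 0$ before invoking part~\ref{enum:cohom_map_into_ext_alg}. The only differences are cosmetic (you normalize with $A_j=(Lr_j)^n$ rather than $r_j^n$, and you pass to a subsequence on which the averaged integrals converge to $c>0$ rather than merely have positive infimum), and these do not affect the argument.
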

    \begin{proof}[Sketch of proof using Proposition \ref{prop:technical_abstraction}]
        For $j\in \Z_{>0}$, let $r_j \in (0, \infty)$ be radii such that $\lim_{j \to \infty} r_j = \infty$ and
        \begin{equation} \label{eq:lip_positive_asympt_degree_seq}
            \inf_{j \in \Z_{>0}} \frac{1}{r_j^n} \int_{\B^n_{r_j}} f^* \vol_M > 0,
        \end{equation}
        and let $f_j = f \circ s_j$, where $s_j \colon \B^n \to \B^n_{r_j}$ is the scaling map $x \mapsto r_j x$. Now every $f_j$ is $(r_j L$)-Lipschitz, and hence for every $\omega \in C^\infty(\wedge^k T^* M)$, we have $f_j^* \omega \in W^{d, \infty}(\wedge^k T^* \B^n)$ with $\smallnorm{f_j^* \omega}_{L^\infty(\B^n)} \le (L r_j)^k \norm{\omega}_{L^\infty(M)}$ and $f_j^* d\omega = d f_j^* \omega$ weakly. 
        
        In particular, Proposition \ref{prop:technical_abstraction} \ref{enum:cohom_map_into_forms} applies with $p_0 = \ldots = p_n = \infty$, $G_j = f_j^*$, $A_j = r_j^n$, and $C = L^n$. Thus, we find a graded algebra homomorphism $L \colon H^*_\derham(M) \to L^{p_0, \dots, p_n}(\wedge^* T^* \B^n) \cap \ker(d)$ with $r_j^{-n} f_{j_i}^* \vol_M \weakto L([\vol_M])$, where the convergence is $L^\infty$-weak. Now, the constant function $x \mapsto 1$ on $\B^n$ is an admissible test function for $L^\infty$-weak convergence of $n$-forms, so \eqref{eq:lip_positive_asympt_degree_seq} implies that $L([\vol_M])$ has positive integral, allowing the application of Proposition \ref{prop:technical_abstraction} \ref{enum:cohom_map_into_ext_alg} to complete the proof.
    \end{proof}

    Next, we outline the proof of the embedding theorem for quasiregular $\omega$-curves $F \colon \R^n \to M$, which generalizes the result for quasiregular maps. The strategy in this case resembles the proof of Theorem \ref{thm:Berdnikov-Guth-Manin}, but with some added complications. Suppose that it is already known that $\norm{DF}_{L^n(\R^n)} = \infty$. One again considers maps $F_j = F \circ s_j$, where $s_j \colon \B^n \to B_j$ map $\B^n$ to increasingly large balls $B_j \subset \R^n$. One may then apply Proposition \ref{prop:technical_abstraction} \ref{enum:cohom_map_into_forms} with $p_k = n/k$, $G_j = F_j^*$, $C$ depending on the distortion constant $K$ and $\omega$, and
    \[
        A_j = \int_{B_j} \hodge F^* \omega.
    \]
    Here, $\norm{DF}_{L^n(\R^n)} = \infty$ and \eqref{eq:QRcurve_def} are used to ensure that $A_j \to \infty$. However, the complication that arises is that the convergence $A_j^{-1} F_j^* \omega \weakto L([\omega])$ is only vague, which does not allow the use of $x \mapsto 1$ as a test function. To work around this, the balls $B_j$ are selected specifically to ensure a doubling property which ensures that $A_j^{-1} F_j^* \omega$ cannot tend vaguely to zero.

    We then reach the setting of our current article, where $F$ satisfies \eqref{eq:QR_curve_with_Sigma}. The main difference compared to quasiregular $\omega$-curves is that we need to find a new normalizing factor $A_j$ in order to apply Proposition \ref{prop:technical_abstraction}. It turns out that the right hand side of \eqref{eq:QR_curve_with_Sigma} yields a relatively natural viable normalizing factor, 
    \[
        A_j = \int_{B_j} (K\hodge F^* \omega + \Sigma).
    \]
    Thus, the main new part of the proof is verifying that this new normalizing factor still both yields \eqref{eq:Aj_condition} and ensures that $A_j^{-1} F_j^* \omega$ cannot converge vaguely to 0.

    \subsection{Localization of Theorem \ref{thm:QRcurve_with_sigma_embedding}}

    We then proceed to prove Theorem \ref{thm:QRcurve_with_sigma_embedding} using Proposition \ref{prop:technical_abstraction}. Our first objective is to use rescaling to reduce the global Theorem \ref{thm:QRcurve_with_sigma_embedding} to a local problem on a single ball. We begin by recalling Rickman's Hunting Lemma \cite[Lemma 5.1]{Rickman_Picard}. The following formulation is due to Bonk and Poggi-Corradini \cite[Lemma 2.1]{Bonk_PoggiCorradini-Rickman_Picard}.

    \begin{lemma} \label{lem:Rickman-Hunting}
        Let $\mu$ be an atomless Borel measure on $\R^n$ satisfying $\mu(\R^n)=\infty$ and $\mu(\B)<\infty$ for every open ball $\B \subset \R^n$. Then there exists a constant $D=D(n)>1$ with the property that, for every $j\in \Z_{> 0}$, there exists an open ball $\B \subset \R^n$ for which
        \[
            j\le \mu(2\B)\le D\mu(\B).
        \]
    \end{lemma}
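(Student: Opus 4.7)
The plan is to argue by contradiction via a self-similar iteration that traps an unbounded sequence of measures inside a fixed bounded set. I will fix a dimensional constant $D = D(n)$ strictly larger than the covering number $N = N(n)$, where $N(n)$ is the number of balls of radius $r/2$ needed to cover a ball of radius $2r$ in $\R^n$; concretely, any $D > N(n)$ works. Assuming the conclusion fails for this $D$, there exists some $j \in \Z_{>0}$ such that every open ball $\B \subset \R^n$ satisfies either $\mu(2\B) < j$ or $\mu(2\B) > D\mu(\B)$.

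Using $\mu(\R^n) = \infty$, I will first choose a ball $\B_0 = B(c_0, r_0)$ with $\mu(\B_0) \ge j$. The iteration then produces balls $\B_k = B(c_k, r_0 2^{-k})$ with $\mu(\B_k) \ge j$: given $\B_k$, the hypothesis forces $\mu(2\B_k) > D\mu(\B_k)$, and since $2\B_k$ can be covered by $N$ balls of radius $r_0 2^{-(k+1)}$ whose centers lie in $2\B_k$, subadditivity produces one such ball $\B_{k+1}$ satisfying $\mu(\B_{k+1}) \ge \mu(2\B_k)/N > (D/N)\mu(\B_k) \ge \mu(\B_k)$. Since $D > N$, iterating this inequality yields $\mu(\B_k) > (D/N)^k \mu(\B_0) \to \infty$.

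To close the contradiction, I will use the geometric decay of the radii to trap all the balls in a fixed region. Since $c_{k+1} \in 2\B_k$, the displacement satisfies $\smallabs{c_{k+1} - c_k} \le 2 r_0 2^{-k}$, and summing the resulting geometric series gives $\smallabs{c_k - c_0} < 4 r_0$ uniformly in $k$. Consequently $\B_k \subset 5\B_0$ for every $k$, and hence $\mu(\B_k) \le \mu(5\B_0) < \infty$ by the hypothesis that $\mu$ is finite on balls. This contradicts $\mu(\B_k) \to \infty$ and proves the lemma. The main delicate point is calibrating the covering so that the radii shrink by exactly a factor of two at each step, so that the center wandering remains summable while the measure still grows geometrically; once this scale matching is fixed, the rest of the argument is routine, and in particular the atomlessness hypothesis on $\mu$ is not explicitly needed.
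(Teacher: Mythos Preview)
The paper does not supply its own proof of this lemma; it merely cites the formulation of Bonk and Poggi-Corradini and the original source in Rickman. Your argument is correct and self-contained: the covering iteration with $D>N(n)$ forces geometric growth of $\mu(\B_k)$ while the radii halve, so the centers stay within $5\B_0$ and the finiteness of $\mu$ on balls gives the contradiction. The calibration you flag---covering $2\B_k$ by $N(n)$ balls of radius $r_k/2$ centered in $2\B_k$---is standard (a maximal separated net with a volume count gives $N(n)\le 9^n$), and your observation that atomlessness is not actually used is correct: if $\mu$ had an atom of mass at least $j$, shrinking balls about that point would immediately produce one with $\mu(2\B)/\mu(\B)$ close to $1$.
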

    
    Next, let $m \ge n \ge 2$, let $M$ be a closed, connected, oriented Riemannian $m$-manifold, and let $\omega \in C^\infty(\wedge^n T^* M)$ be a closed non-vanishing $n$-form on $M$ for which the de Rham class $[\omega]$ is in the K\"unneth ideal $K^*(M)$. We also let $K\ge 1$ and $D\ge 1$ be constants. 
    
    We use $\ff$ to denote the family of mappings $F\in C(\B^n_2,M)\cap W^{1,n}(\B^n_2,M)$ for which there exists a $\Sigma \in L^1(\B^n_2)$ with the following properties:
        \begin{enumerate} [label=(\roman*)]
            \item the map $F$ satisfies \eqref{eq:QR_curve_with_Sigma} with $K$, $\Sigma$, and $\omega$;
            \item \[
            0 < \int_{\B^n_2} \left( K \hodge F^* \omega + \Sigma \right) \le D \int_{\B^n} \left( K \hodge F^* \omega + \Sigma \right);
            \]
            \item \[
            \inf_{x\in \B^n_2} \abs{\omega_{F(x)}}_{\mass} \ge D^{-1};
            \]
            \item \[
            \int_{\B^n_2} \abs{\Sigma} \le D.
            \]
        \end{enumerate}
        We say that such a $\Sigma \in L^1(\B^n_2)$ is \emph{$F$-compatible}. Moreover, for all $F\in \ff$ and all $F$-compatible $\Sigma \in L^1(\B^n_2)$, we denote
        \[
        A(F,\Sigma) = \int_{\B^n} \left( K \hodge F^* \omega + \Sigma \right).
        \]
        We also denote $A(F)=\sup_{\Sigma} A(F,\Sigma)$ for $F\in \ff$, where the supremum is taken over all $F$-compatible $\Sigma \in L^1(\B^n_2)$. Note that for every $F \in \ff$, we have
        \[
            A(F) 
            \le K \norm{\omega}_{L^\infty(M)}\norm{D F}_{L^n(\B^n)}^n + D 
            < \infty.
        \]

        We then state the localized version of Theorem \ref{thm:QRcurve_with_sigma_embedding}, which is formulated in terms of properties of $\ff$. The corresponding results in the settings of quasiregular maps and quasiregular curves are \cite[Theorem 1.3]{Heikkila-Pankka_Elliptic} and \cite[Theorem 1.10]{Heikkila_Embedding-curves}, respectively.

        \begin{thm} \label{thm:local_version_of_QRcurve_with_sigma_embedding}
            Let $m \ge n \ge 2$, let $M$ be a closed, connected, oriented Riemannian $m$-manifold, let $\omega \in C^\infty(\wedge^n T^* M)$ be a closed non-vanishing $n$-form on $M$ for which the de Rham class $[\omega]$ is in the K\"unneth ideal $K^*(M)$, let $K\ge 1$, and let $D\ge 1$. If
            \[
                \sup_{F\in \ff} A(F)=\infty,
            \]
            then there exists a graded homomorphism of algebras 
		      \[
			    \iota \colon H_\derham^*(M) \to \wedge^* \R^n
		      \]
		      for which $\iota([\omega]) \ne 0$.
        \end{thm}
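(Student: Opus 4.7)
The plan is to apply Proposition \ref{prop:technical_abstraction} with the base ball $B = \B^n_2$, the H\"older sequence $p_k = n/k$ (reading $p_0 = \infty$ and $p_n = 1$), and a suitably normalized sequence of pullbacks. First, by the hypothesis $\sup_{F \in \ff} A(F) = \infty$, I pick $F_j \in \ff$ with $A(F_j) \to \infty$, and for each $j$ an $F_j$-compatible $\Sigma_j \in L^1(\B^n_2)$ such that $A_j := A(F_j, \Sigma_j) \to \infty$. Replacing $\Sigma_j$ by $\abs{\Sigma_j}$, which remains $F_j$-compatible, I may moreover assume $\Sigma_j \geq 0$. The H\"older sequence property for $p_k = n/k$ holds with equality $p_k^{-1} + p_l^{-1} = p_{k+l}^{-1}$, and $G_j(\alpha) := F_j^*\alpha$ defines a generalized pullback $G_j \colon C^\infty(\wedge^* T^* M) \to L^{p_0, \dots, p_n}(\wedge^* T^* \B^n_2)$ by the standard calculus of Sobolev pullbacks of smooth forms.

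The estimate \eqref{eq:Aj_condition} needed to apply Proposition \ref{prop:technical_abstraction}\ref{enum:cohom_map_into_forms} follows from the pointwise bound $\abs{DF_j}^n \leq D(K \hodge F_j^* \omega + \Sigma_j)$ a.e.\ in $\B^n_2$, which is \eqref{eq:QR_curve_with_Sigma} divided by $\inf \abs{\omega_{F_j}}_{\mass} \geq D^{-1}$ from condition (iii) of $\ff$. Integrating over $\B^n_2$ and applying the doubling property (ii) then yields $\int_{\B^n_2}\abs{DF_j}^n \leq D^2 A_j$, which combined with $\abs{F_j^*\alpha} \leq \norm{\alpha}_{L^\infty(M)} \abs{DF_j}^k$ delivers \eqref{eq:Aj_condition} with $C = D^2$. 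Since $[\omega] \in K^n(M)$, Proposition \ref{prop:technical_abstraction}\ref{enum:cohom_map_into_forms} produces, after passing to a subsequence still indexed by $j$, a graded algebra homomorphism $L \colon H_\derham^*(M) \to L^{p_0, \dots, p_n}(\wedge^* T^* \B^n_2) \cap \ker(d)$ for which $A_j^{-1} F_j^*\omega \weakto L([\omega])$ vaguely in $L^1(\wedge^n T^* \B^n_2)$; the convergence is vague rather than $L^1$-weak because $p_n = 1$. To then invoke Proposition \ref{prop:technical_abstraction}\ref{enum:cohom_map_into_ext_alg}, what remains is to show that $L([\omega])$ is not a.e.\ vanishing.

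This nonvanishing step is the main obstacle, since only test forms compactly supported in $\B^n_2$ are admissible in the vague convergence, and a priori the mass of $A_j^{-1} F_j^*\omega$ could all escape to $\partial \B^n_2$ in the limit. To rule this out using the doubling condition (ii), I introduce the non-negative Radon measures $\mu_j := A_j^{-1}(K \hodge F_j^*\omega + \Sigma_j)\dd x$ on $\B^n_2$, which by construction satisfy $\mu_j(\B^n) = 1$ and, by (ii), $\mu_j(\B^n_2) \leq D$. Passing to a further subsequence, I obtain a vague limit $\mu$ of Radon measures on $\B^n_2$; upper semicontinuity of vague convergence on the compact set $\overline{\B^n}$ then gives $\mu(\overline{\B^n}) \geq \limsup_j \mu_j(\overline{\B^n}) \geq 1$, so $\mu \neq 0$. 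Since (iv) provides $\int_{\B^n_2}\Sigma_j \leq D$ while $A_j \to \infty$, the terms $A_j^{-1}\Sigma_j \dd x$ vanish in total variation, so $A_j^{-1} K (\hodge F_j^*\omega)\dd x \weakto \mu$ vaguely as well. Comparing this with $A_j^{-1} F_j^*\omega \weakto L([\omega])$ via the Hodge star, uniqueness of vague limits (Lemma \ref{lem:vague_limits_unique}) identifies $\mu = K(\hodge L([\omega]))\dd x$, so $L([\omega])$ is not a.e.\ zero. Applying Proposition \ref{prop:technical_abstraction}\ref{enum:cohom_map_into_ext_alg} to the class $c = [\omega]$ then yields the desired graded homomorphism $\iota$ with $\iota([\omega]) \ne 0$.
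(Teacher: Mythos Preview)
Your proof is correct in its overall strategy and closely mirrors the paper's approach: both apply Proposition~\ref{prop:technical_abstraction} with $p_k=n/k$, $G_j=F_j^*$, and normalizing factors coming from $A(F_j,\Sigma_j)$, and both use the doubling property~(ii) of $\ff$ to rule out escape of mass to $\partial\B^n_2$ in the vague limit.

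There is one small slip. Your claim that replacing $\Sigma_j$ by $|\Sigma_j|$ preserves $F_j$-compatibility is not justified: condition~(ii) compares the integrals of $K\hodge F_j^*\omega+\Sigma_j$ over $\B^n_2$ and $\B^n$, and adding the nonnegative quantity $|\Sigma_j|-\Sigma_j$ to both integrands need not preserve that inequality. Fortunately the replacement is unnecessary. The density of $\mu_j$, namely $A_j^{-1}(K\hodge F_j^*\omega+\Sigma_j)$, is already nonnegative a.e.\ directly from~\eqref{eq:QR_curve_with_Sigma} (the left-hand side $|\omega_{F_j}|_{\mass}|DF_j|^n$ is nonnegative), and everything else you use --- the energy bound $\int_{\B^n_2}|DF_j|^n\le D^2A_j$, the normalization $\mu_j(\B^n)=1$, the mass bound $\mu_j(\B^n_2)\le D$, and $\|\Sigma_j\|_{L^1(\B^n_2)}\le D$ --- follows from properties (i)--(iv) for the original $\Sigma_j$. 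Simply delete the replacement sentence and the argument stands.

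For comparison, the paper's nonvanishing step is more direct: rather than extract a limiting Radon measure and identify it, it fixes a bump $\eta\in C_0^\infty(\B^n_2,[0,1])$ with $\eta\equiv 1$ on $\B^n$ and estimates
\[
\int_{\B^n_2}\eta\,A_j^{-1}F_j^*\omega
\;\ge\; \frac{1}{K A_j}\int_{\B^n}(K\hodge F_j^*\omega+\Sigma_j) - \frac{\|\Sigma_j\|_{L^1(\B^n_2)}}{K A_j}
\;\ge\; \frac{1}{K} - \frac{1+D}{K A_j},
\]
which bounds the vague limit tested against $\eta$ below by $K^{-1}>0$. Your measure-compactness packaging reaches the same conclusion and is equally valid.
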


        We then proceed to show how Theorem \ref{thm:local_version_of_QRcurve_with_sigma_embedding} implies Theorem \ref{thm:QRcurve_with_sigma_embedding}. The essence of this proof is contained in the following proposition; see also \cite[Proposition 2.1]{Heikkila_Embedding-curves}, \cite[Proposition 2.1]{Heikkila-Pankka_Elliptic}, and \cite[Section 4]{Prywes_Annals}.

        \begin{prop} \label{prop:reduction}
            Let $m \ge n \ge 2$, let $M$ be a closed, connected, oriented Riemannian $m$-manifold, let $\omega \in C^\infty(\wedge^n T^* M)$ be a closed non-vanishing $n$-form on $M$ for which the de Rham class $[\omega]$ is in the K\"unneth ideal $K^*(M)$, let $K \ge 1$, and let $\Sigma \in L^1(\R^n)$. If there exists a map $F \in C(\R^n, M) \cap W^{1,n}_\loc(\R^n, M)$ for which $\norm{DF}_{L^n(\R^n)} = \infty$ and $F$ satisfies \eqref{eq:QR_curve_with_Sigma} with $K$, $\Sigma$, and $\omega$, then there exists a constant $D=D(\omega,\Sigma)>1$ for which
            \[
            \sup_{F'\in \ff} A(F') = \infty.
            \]
        \end{prop}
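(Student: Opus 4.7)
The plan is to produce, via rescaling, an infinite sequence of maps $F_j \in \ff$ along which the local energy $A(F_j)$ diverges. For an open ball $B = \B^n(x_0, r) \subset \R^n$, the natural rescaling is $F_B \colon \B^n_2 \to M$, $F_B(y) = F(x_0 + r y)$, which maps $\B^n$ and $\B^n_2$ onto $B$ and $2B$ respectively and scales the pull-back of $\omega$ by $r^n$.

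The main tool driving the choice of the balls is Rickman's Hunting Lemma (Lemma \ref{lem:Rickman-Hunting}), applied to the Borel measure $\mu$ on $\R^n$ with density $K \hodge F^* \omega + \Sigma$. This density is a.e.\ nonnegative, since \eqref{eq:QR_curve_with_Sigma} yields $K \hodge F^* \omega + \Sigma \ge \abs{\omega_{F(\cdot)}}_{\mass} \abs{DF}^n \ge 0$ a.e., and $\mu$ is finite on balls since $F^* \omega \in L^1_\loc(\R^n)$ and $\Sigma \in L^1(\R^n)$. The crucial observation is that $\mu(\R^n) = \infty$: the non-vanishing form $\omega$ on the compact manifold $M$ has $\abs{\omega_y}_{\mass} \ge c_\omega$ for some constant $c_\omega > 0$ and every $y \in M$, so the same distortion estimate gives $K \hodge F^* \omega + \Sigma \ge c_\omega \abs{DF}^n$ a.e., and the hypothesis $\norm{DF}_{L^n(\R^n)} = \infty$ then forces $\mu(\R^n) = \infty$. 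Rickman's Hunting Lemma therefore supplies a constant $D_0 = D_0(n) > 1$ and balls $B_j = \B^n(x_j, r_j)$ with $j \le \mu(2 B_j) \le D_0 \mu(B_j)$ for every $j \in \Z_{> 0}$.

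Writing $s_j(y) = x_j + r_j y$, setting $F_j = F \circ s_j \in C(\B^n_2, M) \cap W^{1,n}(\B^n_2, M)$, and defining $\Sigma_j(y) = r_j^n \Sigma(s_j(y))$, a direct change of variables shows that $F_j$ satisfies \eqref{eq:QR_curve_with_Sigma} with constants $K$ and $\Sigma_j$, that
\[
\int_{\B^n_2} \left( K \hodge F_j^* \omega + \Sigma_j \right) = \mu(2 B_j), \qquad \int_{\B^n} \left( K \hodge F_j^* \omega + \Sigma_j \right) = \mu(B_j),
\]
and that $\int_{\B^n_2} \abs{\Sigma_j} = \int_{2 B_j} \abs{\Sigma} \le \norm{\Sigma}_{L^1(\R^n)}$. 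Combined with the lower bound $\abs{\omega_{F_j(\cdot)}}_{\mass} \ge c_\omega$ on $\B^n_2$, these computations show that $F_j \in \ff$ and that $\Sigma_j$ is $F_j$-compatible, provided the constant $D = D(\omega, \Sigma)$ is chosen to dominate $D_0$, $c_\omega^{-1}$, and $\norm{\Sigma}_{L^1(\R^n)}$. Then $A(F_j) \ge A(F_j, \Sigma_j) = \mu(B_j) \ge j/D_0$, and the claim follows.

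The main conceptual point is the implication $\norm{DF}_{L^n(\R^n)} = \infty \Rightarrow \mu(\R^n) = \infty$, which relies on the non-vanishing of $\omega$ on the compact target $M$ to bound $\abs{\omega_{F(\cdot)}}_{\mass}$ uniformly from below. Everything else is a routine change-of-variables computation combined with Rickman's Hunting Lemma, and I do not anticipate any hidden obstacle.
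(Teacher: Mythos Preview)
Your proof is correct and follows essentially the same approach as the paper: apply Rickman's Hunting Lemma to the measure with density $K \hodge F^*\omega + \Sigma$, then rescale the resulting balls to $\B^n_2$ and verify the compatibility conditions with a suitably enlarged constant $D$. The only cosmetic difference is that the paper explicitly mentions atomlessness of $\mu$ (which is immediate from the local integrability of the density), and your justification of $\mu(\R^n) = \infty$ via the uniform lower bound $c_\omega$ on $\abs{\omega}_{\mass}$ is slightly more detailed than the paper's.
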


        \begin{proof}
            The measure $\mu(E)=\int_E \left( K \hodge F^* \omega + \Sigma \right)$ satisfies the assumptions of Lemma \ref{lem:Rickman-Hunting}; indeed, \eqref{eq:QR_curve_with_Sigma} ensures that $\mu$ has non-negative values, the local integrability of $\abs{DF}^n$ and $\Sigma$ ensures that $\mu$ is atomless and finite on balls, and $\mu(\R^n) = \infty$ follows from \eqref{eq:QR_curve_with_Sigma} and $\norm{DF}_{L^n(\R^n)} = \infty$. Hence, there exists a constant $D=D(n)>1$ and a sequence of open balls $(\B^n(a_j,r_j))_{j\in \Z_{> 0}}$ for which
            \[
            j \le \int_{\B^n(a_j,2r_j)} \left( K \hodge F^* \omega + \Sigma \right) \le D \int_{\B^n(a_j,r_j)} \left( K \hodge F^* \omega + \Sigma \right).
            \]
            Let $D'=D'(\omega,\Sigma)\ge D$ satisfy $\inf_{x\in M} \abs{\omega_x}_{\mass} \ge (D')^{-1}$ and $\norm{\Sigma}_{L^1(\R^n)} \le D'$. Then, for each $j\in \Z_{> 0}$, the map $F_j \colon \B^n_2 \to M$, $x\mapsto F(r_jx+a_j)$, belongs to $\mathcal{F}_{K,D'}(M,\omega)$; indeed, $\Sigma_j\in L^1(\B^n_2)$ defined by $x\mapsto r^n_j \Sigma(r_jx+a_j)$ is $F_j$-compatible. Moreover, by our selection condition for the balls $\B^n(a_j,r_j)$, we have $A(F_j)\ge A(F_j,\Sigma_j) \ge j/D$ for all $j \in \Z_{> 0}$. Thus, the claim holds with the choice of constant $D'$.
        \end{proof}

        We then complete the process of reducing Theorem \ref{thm:QRcurve_with_sigma_embedding} to Theorem \ref{thm:local_version_of_QRcurve_with_sigma_embedding}
        
        \begin{proof}[Proof of Theorem \ref{thm:QRcurve_with_sigma_embedding} assuming Theorem \ref{thm:local_version_of_QRcurve_with_sigma_embedding}]
            Theorem \ref{thm:local_version_of_QRcurve_with_sigma_embedding} and Proposition \ref{prop:reduction} immediately imply the claim in the case $K \ge 1$. Hence, it remains to consider the case $0 \le K < 1$. We show that in this case, there are no maps $F$ which satisfy the given assumptions, and the statement is hence trivially true. 
            
            Indeed, suppose towards contradiction that $F$ satisfies the given conditions with $0 \le K < 1$. Then, by \eqref{eq:QR_curve_with_Sigma} and the point-wise estimate $\hodge (F^* \omega)_x \le \abs{\omega_{F(x)}}_{\mass} \abs{DF(x)}^n$ for a.e.\ $x \in \R^n$, we get $\abs{\omega_{F(x)}}_{\mass} \abs{DF(x)}^n \le (1-K)^{-1} \Sigma(x)$ for a.e.\ $x \in \R^n$. Notably, since $M$ is compact, and since $\omega$ is continuous and non-vanishing, $\abs{\omega}_{\mass}$ has a positive minimum on $M$, and therefore 
            \[
                \norm{DF}_{L^n(\R^n)}^n \le \left((1-K)\min_{x \in M} \abs{\omega_x}_{\mass}\right)^{-1} \norm{\Sigma}_{L^1(\R^n)} < \infty.
            \]
            This contradicts $\norm{DF}_{L^n(\R^n)} = \infty$; thus no maps $F$ satisfying the given conditions exist if $0 \le K < 1$.
        \end{proof}
        
        \subsection{Proof of Theorem \ref{thm:local_version_of_QRcurve_with_sigma_embedding}}
        To complete the proof of Theorem \ref{thm:QRcurve_with_sigma_embedding}, it hence remains to prove Theorem \ref{thm:local_version_of_QRcurve_with_sigma_embedding}. For this, we use Proposition \ref{prop:technical_abstraction}. In order to apply this proposition, we first need the following lemma.

        \begin{lemma} \label{lem:normalized_is_bounded}
            Let $m \ge n \ge 2$, let $M$ be a closed, connected, oriented Riemannian $m$-manifold, let $\omega \in C^\infty(\wedge^n T^* M)$ be a closed non-vanishing $n$-form on $M$ for which the de Rham class $[\omega]$ is in the K\"unneth ideal $K^*(M)$, let $K\ge 1$, and let $D\ge 1$. If $F\in \ff$, then we have
            \[
                \norm{F^* \alpha}_{L^{n/k}(\B_2^n)} \le D^2 
                A^\frac{k}{n}(F) \norm{\alpha}_{L^\infty(M)}
            \]
            for $\alpha \in C^\infty(\wedge^k T^* M)$ and $k=0,\ldots,n$.
        \end{lemma}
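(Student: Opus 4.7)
The plan is a short chain of pointwise and integral estimates, using only the defining properties of $\ff$ together with the standard pullback estimate \eqref{eq:pull-back_pointw_est}.

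First, I would dispose of the trivial endpoint $k=0$ by the convention $L^{n/0}=L^{\infty}$, since a $0$-form is a function and $\|\alpha\circ F\|_{L^\infty(\B_2^n)}\le\|\alpha\|_{L^\infty(M)}$, while $A(F)^{0/n}=1$ and $D^2\ge 1$.

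Next, fix $k\in\{1,\dots,n\}$, fix $\alpha\in C^\infty(\wedge^k T^*M)$, and fix an $F$-compatible $\Sigma\in L^1(\B_2^n)$. The main step is the bound
\[
\int_{\B_2^n}|DF|^n \;\le\; D^2\,A(F),
\]
which I would obtain in two moves. From the defining distortion inequality \eqref{eq:QR_curve_with_Sigma} together with the lower bound $\inf_{x\in\B_2^n}|\omega_{F(x)}|_{\mass}\ge D^{-1}$ (property (iii) of $\ff$), one gets the pointwise estimate
\[
|DF(x)|^n \;\le\; D\bigl(K\,\hodge(F^*\omega)_x+\Sigma(x)\bigr)\qquad\text{a.e.}
\]
Integrating over $\B_2^n$ and then applying the doubling-type property (ii) of $\ff$ yields
\[
\int_{\B_2^n}|DF|^n \;\le\; D\!\int_{\B_2^n}\!\bigl(K\hodge F^*\omega+\Sigma\bigr)\;\le\;D^2\!\int_{\B^n}\!\bigl(K\hodge F^*\omega+\Sigma\bigr)\;=\;D^2 A(F,\Sigma)\;\le\;D^2 A(F).
\]

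Now combine with the pointwise pullback inequality \eqref{eq:pull-back_pointw_est}, which gives
\[
|(F^*\alpha)_x|\;\le\;|\alpha_{F(x)}|\,|DF(x)|^k\;\le\;\|\alpha\|_{L^\infty(M)}\,|DF(x)|^k.
\]
Raising to the power $n/k$ and integrating,
\[
\|F^*\alpha\|_{L^{n/k}(\B_2^n)}^{n/k}\;\le\;\|\alpha\|_{L^\infty(M)}^{n/k}\int_{\B_2^n}|DF|^n\;\le\;\|\alpha\|_{L^\infty(M)}^{n/k}\,D^2 A(F).
\]
Taking $(n/k)$-th roots gives $\|F^*\alpha\|_{L^{n/k}(\B_2^n)}\le D^{2k/n} A(F)^{k/n}\|\alpha\|_{L^\infty(M)}$, and since $D\ge 1$ and $k\le n$, we have $D^{2k/n}\le D^2$, yielding the claimed inequality.

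No step here is a real obstacle; the only point that deserves a note is the use of (ii), where one must confirm the doubling is applied to the single combined measure $K\hodge F^*\omega+\Sigma$ rather than to its pieces separately—this is exactly how $A(F,\Sigma)$ is defined in the article, so the step is direct.
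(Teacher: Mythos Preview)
Your proof is correct and follows essentially the same route as the paper's: both first bound $\int_{\B_2^n}\abs{DF}^n$ by $D^2 A(F)$ using property (iii) to remove the comass factor and property (ii) for the doubling, then feed this into the pointwise pullback estimate \eqref{eq:pull-back_pointw_est} and take $(n/k)$-th roots, absorbing $D^{2k/n}\le D^2$. The only cosmetic difference is the order in which the two ingredients are presented.
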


        \begin{proof}
            The claim holds trivially if $k=0$ as $\abs{F^*(\alpha)}(x)=\abs{\alpha(F(x))}$ for each $x\in \B_2^n$. Hence, suppose that $0<k\le n$ and let $\Sigma \in L^1(\B_2^n)$ be $F$-compatible. Then
            \[
                \norm{F^* \alpha}_{L^{n/k}(\B_2^n)} = \left( \int_{\B_2^n} \abs{F^* \alpha}^\frac{n}{k} \right)^\frac{k}{n}
                \le \norm{\alpha}_{L^\infty(M)} \left( \int_{\B_2^n} \abs{DF}^n \right)^\frac{k}{n}.
            \]
            Since $F\in \ff$, we may estimate
            \begin{align*}
                \int_{\B_2^n} \abs{DF}^n &\le D \int_{\B_2^n} (\abs{\omega}_{\mass} \circ F) \abs{DF}^n \le D \int_{\B_2^n} \left( K \hodge F^* \omega + \Sigma \right) \\
                &\le D^2 \int_{\B^n} \left( K \hodge F^* \omega + \Sigma \right) = D^2 A(F,\Sigma) \le D^2 A(F).
            \end{align*}
            Thus
            \[
                \norm{F^* \alpha}_{L^{n/k}(\B_2^n)} 
                \le (D^2 A(F))^\frac{k}{n} \norm{\alpha}_{L^\infty(M)} 
                \le D^2 A^\frac{k}{n}(F) \norm{\alpha}_{L^\infty(M)},
            \]
            which concludes the proof.
        \end{proof}

        We are now ready to prove Theorem \ref{thm:local_version_of_QRcurve_with_sigma_embedding}.

        \begin{proof}[Proof of Theorem \ref{thm:local_version_of_QRcurve_with_sigma_embedding} assuming Proposition \ref{prop:technical_abstraction}]
            By our assumption, there exists a sequence of maps $F_j \in \ff$ for which $\lim_{j \to \infty} A(F_j) = \infty$. The pull-backs $F_j^*$ map $C^\infty(\wedge^* T^* M)$ into $L^{p_0, \dots, p_k}(\wedge^* T^* \B^n_2)$, where $p_k = n/k$ for all $k \in \{0, \dots, n\}$. Thus, by Lemma \ref{lem:normalized_is_bounded}, the assumptions of Proposition \ref{prop:technical_abstraction} part \ref{enum:cohom_map_into_forms} are satisfied with $G_j = F_j^*$, $A_j = A(F_j)$, and $C = D^2$. Hence, after replacing $F_j$ with a subsequence, we find a graded homomorphism of algebras $L \colon H^*_\derham(M) \to L^{p_0, \dots, p_k}(\wedge^* T^*\B^n_2) \cap \ker(d)$ such that $A^{-1}(F_j) F_{j}^* \omega \weakto L([\omega])$ vaguely.

            Now, if $L([\omega]) \ne 0$, then the claim follows by Proposition \ref{prop:technical_abstraction} part \ref{enum:cohom_map_into_ext_alg}. Thus, it suffices to show that $A^{-1}(F_{j}) F_{j}^* \omega$ does not converge vaguely to 0. For this, let $\eta \in C^\infty_0(\B^n_2, [0, 1])$ be such that $\eta \equiv 1$ on $\B^n$. We fix $F_j$-compatible $\Sigma_j \in L^1(\B^n_2)$ for which $A(F_j, \Sigma_j) \ge A(F_j) - 1$, and note that by $F_j$-compatibility, we have $K \hodge F_{j}^* \omega + \Sigma_j \ge 0$ a.e.\ in $\B^n_2$. Using this in conjunction with the defining properties of $\ff$, we estimate that
            \begin{multline*}
                \int_{\B^n_2} \eta A^{-1}(F_j) F_{j}^* \omega\\
                \ge \frac{1}{K A(F_j)}\left( 
                    \int_{\B^n_2} \eta \cdot ( K \hodge F_{j}^* \omega 
                        + \Sigma_j) \vol_n \right) 
                    - \frac{\norm{\eta}_{L^\infty(\B^n_2)} 
                        \norm{\Sigma_j }_{L^1(\B^n_2)}}{KA(F_j)}\\
                \ge \frac{1}{K A(F_j)}\left(
                    \int_{\B^n} ( K \hodge F_{j}^* \omega 
                    	+ \Sigma_j) \vol_n \right)
                    - \frac{D}{KA(F_j)}\\
                = \frac{A(F_j, \Sigma_j) - D}{K A(F_j)} \ge \frac{1}{K} - \frac{1 + D}{K A(F_j)}.
            \end{multline*}
            Since $\lim_{j \to \infty} A(F_j) = \infty$, it follows that
            \[
                \liminf_{j \to \infty} \int_{\B^n_2} \eta A^{-1}(F_j) F_{j}^* \omega \ge K^{-1}.
            \]
            This is impossible if $A^{-1}(F_j) F_{j}^* \omega \weakto 0$ vaguely, completing the proof of the theorem.
        \end{proof}

        Thus, the proof of Theorem \ref{thm:QRcurve_with_sigma_embedding} is complete with the exception of the proof of Proposition \ref{prop:technical_abstraction}, which we provide in Section \ref{sect:abstraction_proof}.

        \section{A quasiregular value in the image implies infinite energy}
        \label{sect:proof_of_infinite_energy}

        In this section, we prove Proposition \ref{prop:QRval_infinite_energy} and Theorem \ref{thm:QRvalue_embedding}. We begin by stating a Stokes' theorem -type result for pull-backs of closed forms by a locally Sobolev map, see also e.g.\ \cite[Lemma 2.4]{Kangasniemi-Onninen_Heterogeneous}.

        \begin{lemma} \label{lem:pullback_stokes}
            Let $m\ge n\ge 2$, let $M$ be a closed, connected, oriented Riemannian $m$-manifold, and let $F\in W_\loc^{1,n}(\R^n,M)$ with $\norm{DF}_{L^n(\R^n)}<\infty$. Then, for every closed $n$-form $\omega \in C^\infty(\wedge^n T^* M)$ with $[\omega] \in K^*(M)$, we have
            \[
            \int_{\R^n} F^* \omega = 0.
            \]
        \end{lemma}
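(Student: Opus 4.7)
The plan is to use the Künneth decomposition from Lemma \ref{lem:kunneth} together with the Poincaré homotopy operator to express $F^*\omega$ as an exact form on large balls, and then run a cutoff/energy argument exploiting the assumption $\norm{DF}_{L^n(\R^n)} < \infty$.

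First, I would invoke Lemma \ref{lem:kunneth} to decompose $\omega = \sum_{i=1}^{j} \alpha_i \wedge \beta_i$ with closed smooth forms $\alpha_i \in C^\infty(\wedge^{k_i} T^* M)$ and $\beta_i \in C^\infty(\wedge^{n-k_i} T^* M)$ where $1 \le k_i \le n-1$. From the pointwise estimate \eqref{eq:pull-back_pointw_est}, the finiteness of $\norm{DF}_{L^n(\R^n)}$, and the boundedness of $|\alpha_i|$, $|\beta_i|$ on the compact $M$, I obtain $F^*\alpha_i \in L^{n/k_i}(\R^n)$ and $F^*\beta_i \in L^{n/(n-k_i)}(\R^n)$, both weakly closed because $\alpha_i,\beta_i$ are. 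Since $k_i/n + (n-k_i)/n = 1$, Hölder's inequality gives $F^*\omega \in L^1(\R^n)$, so the integral in the conclusion is well-defined.

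Next, for each $r > 0$ I would apply the Poincaré homotopy operator $T_r$ on $\B_r^n$. Since $F^*\alpha_i \in W^{d,n/k_i}(\wedge^{k_i}T^*\B_r^n)$ is weakly closed, Proposition \ref{prop:homotopy_operator}\ref{item:homotopy} yields $F^*\alpha_i = d(T_r F^*\alpha_i)$ on $\B_r^n$, and part \ref{item:homotopy_continuity} gives $\norm{T_r F^*\alpha_i}_{L^{n/k_i}(\B_r^n)} \le C(n)\,r\,\norm{F^*\alpha_i}_{L^{n/k_i}(\R^n)}$. Fix a cutoff $\eta_r \in C^\infty_0(\B_r^n, [0,1])$ with $\eta_r \equiv 1$ on $\B_{r/2}^n$ and $|\nabla \eta_r| \le 4/r$. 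Approximating $T_r F^*\alpha_i$ in the $W^{d,n/k_i}$-norm by smooth forms (via mollification on a slightly smaller ball) and using the weak closedness of $F^*\beta_i$ against the smooth compactly supported test forms $\eta_r \sigma_n$, I would pass to the limit in Hölder-paired integrals to establish the weak integration-by-parts identity
\[
    \int_{\R^n} \eta_r\, F^*\alpha_i \wedge F^*\beta_i \;=\; (-1)^{k_i}\int_{\B_r^n} d\eta_r \wedge T_r F^*\alpha_i \wedge F^*\beta_i.
\]

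Summing over $i$, the left-hand side converges to $\int_{\R^n} F^*\omega$ as $r \to \infty$ by dominated convergence (since $|\eta_r F^*\omega| \le |F^*\omega| \in L^1(\R^n)$), while each term on the right is bounded in absolute value by
\[
    \frac{4}{r}\,\norm{T_r F^*\alpha_i}_{L^{n/k_i}(\B_r^n)}\,\norm{F^*\beta_i}_{L^{n/(n-k_i)}(\B_r^n \setminus \B_{r/2}^n)} \;\le\; C\,\norm{F^*\alpha_i}_{L^{n/k_i}(\R^n)}\,\norm{F^*\beta_i}_{L^{n/(n-k_i)}(\B_r^n \setminus \B_{r/2}^n)},
\]
where the factor of $r$ from the homotopy bound cancels the $1/r$ from $|\nabla \eta_r|$. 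The remaining factor tends to $0$ as $r \to \infty$ by absolute continuity of the Lebesgue integral, giving $\int_{\R^n}F^*\omega = 0$. The main obstacle is the weak integration-by-parts step: both $T_r F^*\alpha_i$ and $F^*\beta_i$ live only in $L^p$-spaces with weak exterior derivatives, so one cannot apply classical Stokes, and care is needed in approximating one of the factors by smooth forms in a way compatible with the dual Hölder exponents $n/k_i$ and $n/(n-k_i)$; once this is settled, the rest is a routine energy estimate.
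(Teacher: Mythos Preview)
Your proposal is correct and follows essentially the same approach as the paper: K\"unneth decomposition via Lemma \ref{lem:kunneth}, Poincar\'e homotopy operator on large balls to write $F^*\alpha_i = dT_r F^*\alpha_i$, a cutoff $\eta_r$, integration by parts, and the same H\"older/absolute-continuity endgame (the paper uses $\eta_r\equiv 1$ on $\B^n_r$ with $\spt\eta_r\subset\B^n_{2r}$ and $T_{2r}$, but this is cosmetic). Your concern about the weak integration-by-parts step is legitimate but routine, exactly as you outline: mollify $\eta_r T_r F^*\alpha_i$ to smooth compactly supported forms and pass to the limit using the H\"older pairing $L^{n/k_i}\times L^{n/(n-k_i)}$; the paper performs this step without comment. (Your sign $(-1)^{k_i}$ should be $-1$, since $\eta_r$ is a $0$-form, but you take absolute values immediately so this is harmless.)
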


        \begin{proof}
            We first note that
            \[
            \int_{\R^n} \abs{F^* \omega} \le \norm{\omega}_{L^\infty(M)} \int_{\R^n} \abs{DF}^n < \infty,
            \]
            so the integral in the claim is well defined. We also note that, by Lemma \ref{lem:kunneth}, there exist closed forms $\alpha_1,\ldots,\alpha_j \in C^\infty(\wedge^* T^*M)$ and closed forms $\beta_1,\ldots,\beta_j \in C^\infty(\wedge^* T^*M)$ for which $\omega=\sum_{i=1}^j \alpha_i \wedge \beta_i$ and $\alpha_i \in C^\infty(\wedge^{k_i} T^*M)$, where $1\le k_i \le n-1$.

            For each $r>0$, let $\eta_r \in C_0^\infty(\R^n)$ be a test function satisfying $0\le \eta_r \le 1$, $\eta_r \equiv 1$ on $\B_r^n$, $\spt \eta_r \subset \B_{2r}^n$, and $\abs{\nabla \eta_r}\le 2/r$. Since each $F^*\alpha_i$ is weakly closed, we may then compute that
            \[
            \int_{\R^n} \eta_r F^* \omega = \sum_{i=1}^j \int_{\R^n} \eta_r F^*\alpha_i \wedge F^*\beta_i = \sum_{i=1}^j \int_{\R^n} \eta_r dT_{2r}F^*\alpha_i \wedge F^*\beta_i.
            \]
            by Proposition \ref{prop:homotopy_operator}\ref{item:homotopy}.

            By Hölder's inequality and Proposition \ref{prop:homotopy_operator}\ref{item:homotopy_continuity}, we may estimate each term in the sum by
            \begin{align*}
                \abs{ \int_{\R^n} \eta_r dT_{2r}F^*\alpha_i \wedge F^*\beta_i } &= \abs{ \int_{\R^n} d\eta_r \wedge T_{2r}F^*\alpha_i \wedge F^*\beta_i } \\
                &\le C(n) \int_{A_r} \abs{d\eta_r} \abs{T_{2r} F^*\alpha_i} \abs{F^* \beta_i} \\
                &\le C(n) \frac{1}{r} \int_{A_r} \abs{T_{2r} F^*\alpha_i} \abs{F^* \beta_i} \\
                &\le C(n) \frac{1}{r} \norm{T_{2r} F^* \alpha_i}_{L^{n/{k_i}}(A_r)} \norm{F^* \beta_i}_{L^{n/(n-k_i)}(A_r)} \\
                &\le C(n) \norm{F^* \alpha_i}_{L^{n/{k_i}}(\B_{2r}^n)} \norm{F^* \beta_i}_{L^{n/(n-k_i)}(A_r)},
            \end{align*}
            where $A_r=\B_{2r}^n \setminus \B_r^n$.

            Since $\norm{DF}_{L^n(\R^n)}<\infty$, and since $\alpha_i$ and $\beta_i$ are bounded, it follows that $F^* \alpha_i \in L^\frac{n}{k_i}(\wedge^{k_i} T^*\R^n)$ and $F^* \beta_i \in L^\frac{n}{n-k_i}(\wedge^{n-k_i} T^*\R^n)$. In particular, we obtain that each product $\norm{F^* \alpha_i}_{L^{n/{k_i}}(\B_{2r}^n)} \norm{F^* \beta_i}_{L^{n/(n-k_i)}(A_r)}$ tends to zero as $r\to \infty$. Hence,
            \[
            \lim_{r\to \infty} \int_{\R^n} \eta_r F^* \omega = 0.
            \]
            Finally, since $\eta_r F^* \omega$ tends pointwise to $F^* \omega$, and since $\norm{\omega}_{L^\infty(M)} \abs{DF}^n$ is an integrable dominant for all of the functions $\abs{\eta_r F^* \omega}$, the claim follows by dominated convergence.
        \end{proof}

        We are now ready to prove Proposition \ref{prop:QRval_infinite_energy}.

        \begin{proof}[Proof of Proposition \ref{prop:QRval_infinite_energy}]
            We may assume that $\Sigma \ge 0$ by replacing $\Sigma$ with $\abs{\Sigma}$ if necessary. Suppose towards contradiction that $y_0=f(x_0)$ for some $x_0 \in \R^n$ and $\norm{Df}_{L^n(\R^n)}<\infty$. By the single-value Reshetnyak's theorem on manifolds, see Corollary \ref{cor:reshetnyak_mflds}, we obtain that $f^{-1}\{y_0\}$ is discrete, and $i(x,f)$ is a positive integer for all $x \in f^{-1}\{y_0\}$. Notably, we may select an open bounded neighborhood $U_0$ of $x_0$ such that $\overline{U_0} \cap f^{-1}\{y_0\} = \{x_0\}$.
            
            Next, for all $r > 0$ small enough that $\B_M(y_0, r) \cap f (\partial U_0) = \emptyset$, let $\eta_r \in C_0^\infty(M)$ be a test function satisfying $0\le \eta_r \le 1$, $\eta_r \equiv 1$ on $\B_M(y_0,r/2)$, and $\spt \eta_r \subset \B_M(y_0,r)$. By the definitions of the degree and local index that we recalled in Section \ref{sect:degree_index},  we have
            \begin{multline*}
                r^n 
                \le C(M, y_0) \abs{\B_M(y_0,r/2)} 
                \le C(M, y_0) \int_M \eta_r \vol_M \\
                = \frac{C(M, y_0)}{\deg(f, y_0, U_0)} \int_{U_0} f^*(\eta_r \vol_M)
                = \frac{C(M, y_0)}{i(x_0, f)} \int_{U_0} f^*(\eta_r \vol_M).
            \end{multline*}
            On the other hand, since $M$ is not a rational cohomology sphere, Poincar\'e duality yields that the de Rham class $[\eta_r \vol_M]$ belongs to the K\"unneth ideal $K^*(M)$. Thus, by Lemma \ref{lem:pullback_stokes}, we obtain that the integral of $f^*(\eta_r \vol_M)$ over $\R^n$ vanishes. It follows that we have
            \[
            \int_{U_0} f^*(\eta_r \vol_M) = -\int_{V_r} f^*(\eta_r \vol_M) \le \int_{V_r} J_f^-,
            \]
            where $V_r=f^{-1}\B_M(y_0,r) \setminus U_0$ and $J_f^-$ denotes the negative part of the Jacobian determinant $J_f$. 
            
            To further estimate this integral, we note that the quasiregular value of $f$ at $y_0$ implies $J_f^- \le K^{-1} \dist^n(f,y_0)\Sigma$ a.e.\ in $\R^n$. Hence, we obtain
            \[
            \int_{V_r} J_f^- \le \frac{1}{K} \int_{V_r} \dist^n(f,y_0)\Sigma \le \frac{r^n}{K} \int_{V_r} \Sigma \le \frac{r^n}{K} \int_{f^{-1}\B_M(y_0,r)} \Sigma
            \]
            since $V_r \subset f^{-1}\B_M(y_0,r)$. In conclusion, we have that
            \[
                \int_{f^{-1}\B_M(y_0,r)} \Sigma \ge C(M, y_0) Ki(x_0,f),
            \]
            where $C(M, y_0) > 0$. However, the sets $f^{-1}\B_M(y_0,r)$ form a nested family of open sets whose intersection is the discrete set $f^{-1}\{y_0\}$. Therefore, since $\Sigma$ is integrable over $\R^n$, the integrals of $\Sigma$ over $f^{-1}\B_M(y_0,r)$ must tend to zero as $r\to 0$. Since $i(x_0, f) > 0$ and $K \ge 1$, it follows that $0 < C(M, y_0) K i(x_0, f) \le 0$; we have reached a contradiction, and the proof is hence complete.
        \end{proof}

        Theorem \ref{thm:QRvalue_embedding} follows as a consequence, barring the proof of Proposition \ref{prop:technical_abstraction} which remains to be proven in Section \ref{sect:abstraction_proof}.

        \begin{proof}[Proof of Theorem \ref{thm:QRvalue_embedding} assuming Proposition \ref{prop:technical_abstraction}]
            We may assume that $M$ is not a rational cohomology sphere. Since $f$ is non-constant, we have $K\ge 1$ by Corollary \ref{cor:K_less_than_1_mflds}. Hence, Proposition \ref{prop:QRval_infinite_energy} yields that $\norm{Df}_{L^n(\R^n)}=\infty$. Thus, Corollary \ref{cor:QRcurveval_embedding} implies the claim since $H^*_\derham(M)\cong H^*(M;\R)$.
        \end{proof}

        \section{Higher integrability of the weak derivative}
        \label{sect:proof_of_continuity}

        In this section, we prove Proposition \ref{prop:higher_int_general}. The proof is based on the following weak reverse Hölder type estimate; see also \cite[Lemma 4.6]{Kangasniemi-Onninen_Rescaling} and \cite[Proposition 4.1]{Heikkila_Growth_curves}.

        \begin{lemma} \label{lem:weak_reverse_hölder}
            Let $m \ge n \ge 2$, let $M$ be a closed, connected, oriented Riemannian $m$-manifold, let $\Omega \subset \R^n$ be a open, and let $F \in W^{1,n}_\loc(\Omega, M)$ satisfy \eqref{eq:QR_curve_with_Sigma}, where $K \ge 0$, $\omega \in C^\infty(\wedge^n T^* M)$ is closed and non-vanishing with $[\omega] \in K^*(M)$, and $\Sigma \in L^{1}_\loc(\Omega)$. Then, we have
            \[
            \inf_{x\in M} \abs{\omega_x}_{\mass} \dashint_Q \abs{DF}^n \le KC(\omega) \left( \dashint_{2Q} \abs{DF}^\frac{n^2}{n+1} \right)^\frac{n+1}{n} + \dashint_{2Q} 2^n \abs{\Sigma}
            \]
            for all cubes $Q$ with $2Q\subset \Omega$.
        \end{lemma}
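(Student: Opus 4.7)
The plan is to combine the distortion inequality \eqref{eq:QR_curve_with_Sigma} with an integration-by-parts argument that exploits the K\"unneth structure of $[\omega]$. By Lemma \ref{lem:kunneth}, I decompose $\omega = \sum_{i=1}^{j} \alpha_i \wedge \beta_i$ with closed forms $\alpha_i \in C^\infty(\wedge^{k_i} T^* M)$ and $\beta_i \in C^\infty(\wedge^{n-k_i} T^* M)$, where $k_i \in \{1, \dots, n-1\}$. Given a cube $Q$ of side length $r$ with $2Q \subset \Omega$, fix $\eta \in C^\infty_0(2Q)$ with $\eta \equiv 1$ on $Q$, $0 \le \eta \le 1$, and $\abs{d\eta} \le C(n)/r$. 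Integrating \eqref{eq:QR_curve_with_Sigma} against $\eta$ over $2Q$, bounding $\abs{\omega \circ F}_{\mass}$ pointwise from below by $\inf_{x \in M}\abs{\omega_x}_{\mass}$, and using $\eta \ge \chi_Q$ on the left together with $\eta\Sigma \le \abs{\Sigma}$ on the right yields
\[
    \inf_{x \in M} \abs{\omega_x}_{\mass} \int_Q \abs{DF}^n
    \le K \int_{2Q} \eta\, F^* \omega + \int_{2Q} \abs{\Sigma}.
\]
The main task reduces to bounding the sum $\sum_{i} \int_{2Q} \eta\, F^* \alpha_i \wedge F^* \beta_i$.

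For each $i$, the pull-backs $F^*\alpha_i$ and $F^*\beta_i$ are weakly closed in the Sobolev sense. Letting $T$ denote the Poincar\'e homotopy operator on the cube $2Q$ furnished by Proposition \ref{prop:homotopy_operator}, part \ref{item:homotopy} gives $F^*\alpha_i = d(T F^*\alpha_i)$, and the product rule, together with $dF^*\beta_i = 0$ and a Stokes step applied to the compactly supported form $\eta\, T F^*\alpha_i \wedge F^*\beta_i$ (justified by smooth approximation of $\eta\, T F^*\alpha_i$ in $W^{d,p}_0$), produces the identity
\[
    \int_{2Q} \eta\, F^*\alpha_i \wedge F^*\beta_i
    = -\int_{2Q} d\eta \wedge T F^*\alpha_i \wedge F^*\beta_i.
\]
Next, I apply H\"older's inequality with exponents $p_i = n^2/(k_i(n+1)-n)$ for $T F^*\alpha_i$ and its conjugate $p_i' = n^2/((n-k_i)(n+1))$ for $F^*\beta_i$, insert the pointwise bounds $\abs{F^*\alpha_i} \le \norm{\alpha_i}_{L^\infty(M)} \abs{DF}^{k_i}$ and $\abs{F^*\beta_i} \le \norm{\beta_i}_{L^\infty(M)} \abs{DF}^{n-k_i}$, and invoke the Sobolev embedding in Proposition \ref{prop:homotopy_operator}\ref{item:homotopy_embedding} with source exponent $q_i = n^2/(k_i(n+1))$ on $F^*\alpha_i$ to arrive at
\[
    \int_{2Q} \eta\, F^*\alpha_i \wedge F^*\beta_i
    \le \frac{C(\omega)}{r}
        \left( \int_{2Q} \abs{DF}^{n^2/(n+1)} \right)^{(n+1)/n}.
\]
The triple $(p_i, p_i', q_i)$ is tuned so that $n/p_i + 1 - n/q_i = 0$, eliminating any $r$-dependent prefactor in the Sobolev step, while simultaneously $k_i q_i = (n-k_i) p_i' = n^2/(n+1)$, so that the two pointwise estimates contribute $\abs{DF}$ at a common integrability exponent $n^2/(n+1)$.

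Summing over $i$, combining with the first display, and dividing through by $\abs{Q} = r^n$, the normalizations $r^{-n-1}\abs{2Q}^{(n+1)/n} = 2^{n+1}$ and $r^{-n}\abs{2Q} = 2^n$ convert the estimate to the averaged form with constants of the claimed shape, the factor $2^{n+1}$ being absorbed into $C(\omega)$. The principal technical obstacle is the exponent bookkeeping: the triple $(p_i, p_i', q_i)$ must simultaneously satisfy H\"older duality, land on the critical Sobolev pairing so that the $r$-scaling in part \ref{item:homotopy_embedding} of Proposition \ref{prop:homotopy_operator} vanishes, and produce a common $\abs{DF}$-exponent after the pointwise pull-back bounds. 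Once these exponents are correctly set, the remainder is a standard cutoff-and-integration-by-parts calculation.
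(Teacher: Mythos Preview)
Your proof is correct and follows essentially the same approach as the paper: the same cutoff, the same K\"unneth decomposition via Lemma \ref{lem:kunneth}, the same integration-by-parts using the Poincar\'e homotopy operator on $2Q$, and the same exponent choices. Your triple $(p_i, p_i', q_i)$ is exactly the paper's $(p_i^*, p_i, q_i)$ under relabeling, with the critical Sobolev pairing $1/q_i = 1/p_i + 1/n$ arranged so that the $r$-factor from Proposition \ref{prop:homotopy_operator}\ref{item:homotopy_embedding} vanishes and the remaining $1/r$ from $\abs{d\eta}$ is absorbed into the averages.
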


        \begin{proof}
            Let $Q$ be a cube of side length $r$ with $2Q\subset \Omega$ and let $\eta \in C_0^\infty(\R^n)$ be a test function satisfying $0\le \eta \le 1$, $\eta \equiv 1$ on $Q$, $\spt \eta \subset 2Q$, and $\abs{\nabla \eta}\le \frac{3}{r}$. Since $F$ satisfies \eqref{eq:QR_curve_with_Sigma}, we obtain
            \[
                \inf_{x\in M} \abs{\omega_x}_{\mass} \int_Q \abs{DF}^n \le K\int_{2Q} \eta F^* \omega + \int_{2Q} \abs{\Sigma}.
            \]
            Thus it suffices to estimate the pull-back term.

            By Lemma \ref{lem:kunneth}, there exist closed forms $\alpha_1,\ldots,\alpha_j \in C^\infty(\wedge^* T^* M)$ and closed forms $\beta_1,\ldots,\beta_j \in C^\infty(\wedge^* T^* M)$ for which $\omega=\sum_{i=1}^j \alpha_i \wedge \beta_i$ and $\alpha_i \in C^\infty(\wedge^{k_i} T^* M)$, where $1\le k_i \le n-1$. It follows that
            \[
            \int_{2Q} \eta F^* \omega \le \sum_{i=1}^j \abs{\int_{2Q} \eta F^* \alpha_i \wedge F^* \beta_i}.
            \]
            Let $T$ denote the Poincar\'e homotopy operator on $2Q$, and for all $i \in \{1, \dots, j\}$, let
            \[
                p_i=\frac{n}{n-k_i}\frac{n}{n+1} 
                \quad \text{and} \quad
                q_i=\frac{n}{k_i}\frac{n}{n+1},
            \]
            noting that $1/q_i = 1/p_i^* + 1/n$. By Proposition \ref{prop:homotopy_operator}\ref{item:homotopy}, Hölder's inequality, and Proposition \ref{prop:homotopy_operator}\ref{item:homotopy_embedding} we may estimate each term in the sum by
            \begin{align*}
                \abs{\int_{2Q} \eta F^* \alpha_i \wedge F^* \beta_i} &= \abs{\int_{2Q} \eta dTF^* \alpha_i \wedge F^* \beta_i} = \abs{\int_{2Q} d\eta \wedge TF^* \alpha_i \wedge F^* \beta_i} \\
                &\le C(n) \int_{2Q} \abs{d\eta} \abs{TF^* \alpha_i} \abs{F^* \beta_i} \\
                &\le C(n) \frac{1}{r} \int_{2Q} \abs{TF^* \alpha_i} \abs{F^* \beta_i} \\
                &\le C(n) \frac{1}{r} \norm{TF^* \alpha_i}_{L^{p_i^*}(2Q)} \norm{F^* \beta_i}_{L^{p_i}(2Q)} \\
                &\le C(n) \frac{1}{r} \norm{F^* \alpha_i}_{L^{q_i}(2Q)} \norm{F^* \beta_i}_{L^{p_i}(2Q)}.
            \end{align*} 
            On the other hand, we have that
            \[
            \norm{F^* \alpha_i}_{L^{q_i}(2Q)} = \left( \int_{2Q} \abs{F^* \alpha_i}^{q_i} \right)^\frac{1}{q_i} \le \norm{\alpha_i}_{L^\infty(M)} \left( \int_{2Q} \abs{DF}^\frac{n^2}{n+1} \right)^\frac{1}{q_i}
            \]
            and similarly
            \[
            \norm{F^* \beta_i}_{L^{p_i}(2Q)} = \left( \int_{2Q} \abs{F^* \beta_i}^{p_i} \right)^\frac{1}{p_i} \le \norm{\beta_i}_{L^\infty(M)} \left( \int_{2Q} \abs{DF}^\frac{n^2}{n+1} \right)^\frac{1}{p_i}.
            \]
            
            Combining all the previous estimates, we obtain
            \[
            \inf_{x\in M} \abs{\omega_x}_{\mass} \int_Q \abs{DF}^n \le KC(\omega)\frac{1}{r} \left( \int_{2Q} \abs{DF}^\frac{n^2}{n+1} \right)^\frac{n+1}{n} + \int_{2Q} \abs{\Sigma}.
            \]
            The claim follows by taking averages.
        \end{proof}

        We obtain the following result as an immediate consequence of Lemma \ref{lem:weak_reverse_hölder} and Proposition \ref{prop:local_Gehring_lemma}.

        \begin{prop} \label{prop:gehring_integrability}
            Let $m \ge n \ge 2$, let $M$ be a closed, connected, oriented Riemannian $m$-manifold, let $\Omega \subset \R^n$ be open, and let $F \in W^{1,n}_\loc(\Omega, M)$ satisfy \eqref{eq:QR_curve_with_Sigma}, where $K \ge 0$, $\omega \in C^\infty(\wedge^n T^* M)$ is closed and non-vanishing with $[\omega] \in K^*(M)$, and $\Sigma \in L^{1+\varepsilon}_\loc(\Omega)$ for some $\varepsilon >0$. Then there exists $\lambda=\lambda(K,\omega,\varepsilon)\in (1,1+\varepsilon)$ satisfying
            \[
             \left( \dashint_Q \abs{DF}^{\lambda n} \right)^\frac{n}{\lambda(n+1)} \le C(\omega,K,\varepsilon) \left( \left( \dashint_{2Q} \abs{DF}^n \right)^\frac{n}{n+1} + \left( \dashint_{2Q} \abs{\Sigma}^\lambda \right)^\frac{n}{\lambda(n+1)} \right)
            \]
            for all cubes $Q$ for which $2Q\subset \Omega$.
        \end{prop}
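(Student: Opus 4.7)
The plan is to cast the weak reverse H\"older estimate of Lemma \ref{lem:weak_reverse_hölder} into the standard Gehring format and then invoke Proposition \ref{prop:local_Gehring_lemma}.

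First, I divide the estimate of Lemma \ref{lem:weak_reverse_hölder} through by $\inf_{x \in M} \abs{\omega_x}_{\mass}$, which is strictly positive since $\omega$ is non-vanishing on the compact manifold $M$, and then raise both sides to the power $n/(n+1)$; the subadditivity $(a+b)^\theta \le a^\theta + b^\theta$ for $\theta = n/(n+1) \in (0,1)$ and $a, b \ge 0$ yields, on all cubes $Q$ with $2Q \subset \Omega$,
\[
    \left( \dashint_Q \abs{DF}^n \right)^{n/(n+1)}
    \le C_0 \dashint_{2Q} \abs{DF}^{n^2/(n+1)}
        + \left( \dashint_{2Q} C_0 \abs{\Sigma} \right)^{n/(n+1)}
\]
for some constant $C_0 = C_0(K,\omega) \ge 1$. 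Setting $p = (n+1)/n$, $g = \abs{DF}^{n^2/(n+1)}$, and $h = C_0^{1/p} \abs{\Sigma}^{1/p}$, so that $g^p = \abs{DF}^n$ and $h^p = C_0\abs{\Sigma}$, this displayed inequality reads exactly as the hypothesis $(\dashint_Q g^p)^{1/p} \le C_0 \dashint_{2Q} g + (\dashint_{2Q} h^p)^{1/p}$ of Proposition \ref{prop:local_Gehring_lemma}.

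Next, for each fixed cube $Q$ with $2Q \subset \Omega$, I apply Proposition \ref{prop:local_Gehring_lemma} on $Q_0 = 2Q$ with $\sigma = 1/2$. The required hypothesis on subcubes $Q' \subset Q_0$ with $2Q' \subset Q_0$ follows from the displayed inequality applied to $Q'$, since $2Q' \subset 2Q \subset \Omega$; the $L^p$-integrability of $g$ and $h$ on $2Q$ follows from $F \in W^{1,n}_\loc(\Omega, M)$ and $\Sigma \in L^{1+\varepsilon}_\loc(\Omega) \subset L^1_\loc(\Omega)$. The lemma then produces an exponent $q_0 = q_0(n, p, C_0) > p$, depending only on $n$, $K$, and $\omega$, such that the higher-integrability conclusion holds for all $q \in (p, q_0)$. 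Substituting $\lambda := qn/(n+1)$, so that $q = \lambda(n+1)/n$, I get $g^q = \abs{DF}^{\lambda n}$ and $h^q = C_0^{\lambda} \abs{\Sigma}^\lambda$, and the conclusion of Proposition \ref{prop:local_Gehring_lemma} transforms, after absorbing $C_0^\lambda$ into the constant, into the desired estimate of the proposition.

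Finally, I select $\lambda = \lambda(K, \omega, \varepsilon) \in (1, 1+\varepsilon)$ close enough to $1$ that the corresponding $q = \lambda(n+1)/n$ lies in $(p, q_0)$; this is possible because $q_0 > p = (n+1)/n$ and $\varepsilon > 0$. The constraint $\lambda < 1+\varepsilon$ then ensures $\abs{\Sigma}^\lambda \in L^1_\loc(\Omega)$ and hence that the right-hand side of the final estimate is finite. There is no real obstacle in the proof; the main piece of bookkeeping is correctly translating the Gehring exponent $q$ into the parameter $\lambda$ via $\lambda = qn/(n+1)$ so that the statement of Proposition \ref{prop:local_Gehring_lemma} aligns with the one desired here.
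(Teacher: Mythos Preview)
Your proof is correct and follows exactly the approach the paper intends: the paper states the proposition as ``an immediate consequence of Lemma \ref{lem:weak_reverse_hölder} and Proposition \ref{prop:local_Gehring_lemma}'' without further detail, and your argument fills in precisely the standard bookkeeping (the substitutions $p=(n+1)/n$, $g=\abs{DF}^{n^2/(n+1)}$, $h=C_0^{1/p}\abs{\Sigma}^{1/p}$, $\lambda=qn/(n+1)$) needed to match the hypothesis and conclusion of the Gehring lemma.
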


        We are now ready to prove Proposition \ref{prop:higher_int_general}.

        \begin{proof}[Proof of Proposition \ref{prop:higher_int_general}]
            Since $M$ is closed, we may assume that $F$ satisfies \eqref{eq:QR_curve_with_Sigma}. By Proposition \ref{prop:gehring_integrability}, there exists $\varepsilon'>0$ for which $\abs{DF}\in L^{n+\varepsilon'}_\loc(\Omega)$. Since we also have the Sobolev embedding $W^{1,n}_\loc(\Omega, M)\subset L^{n+\varepsilon'}_\loc(\Omega, M)$, it follows that $F\in W^{1,n+\varepsilon'}_\loc(\Omega, M)$. The existence of a continuous representative follows by Morrey's inequality.
        \end{proof}

        We close this section with a remark about polynomial growth estimates for the $L^n$-norms of the derivatives of solutions of \eqref{eq:QR_with_Sigma}.

        \begin{rem}\label{rem:polynomial_growth} 
            Let $M$ be a closed, connected, oriented Riemannian $n$-manifold that is not a rational cohomology sphere, and let $f \in W^{1,n}_\loc(\R^n, M)$ satisfy \eqref{eq:QR_with_Sigma} with $K \ge 0$ and $\Sigma \in L^{1+\eps}_\loc(\R^n)$, $\eps>0$. In this case, Proposition \ref{prop:gehring_integrability} applies with $\omega = \vol_M$, and can be rearranged into an estimate of the form
            \begin{multline*}
                \norm{Df}_{L^n(2Q)}^{n^2/(n+1)} \\
                \ge r^{\frac{(1-\lambda^{-1}) n^2}{n+1}}
                \left( \frac{2^\frac{n^2}{n+1}}{C_0} 
                \norm{Df}_{L^{\lambda n}(Q)}^{n^2/(n+1)}
                - 2^{\frac{n^2(1-\lambda^{-1})}{n+1}}
                \norm{\Sigma}_{L^{\lambda}(2Q)}^{n/(n+1)}
                \right),
            \end{multline*}
            where $r$ is the side length of $Q$ and $C_0 = C_0(M, K, \eps)$. If we have 
            \begin{equation}\label{eq:poly_growth_requirement}
                \norm{Df}_{L^{\lambda n}(\R^n)}^n > 2^{-\frac{n}{\lambda}} C_0^{(n+1)/n} \norm{\Sigma}_{L^{\lambda}(\R^n)},
            \end{equation}
            then we may select a cube $Q_0 \subset \R^n$ with 
            \[
                \norm{Df}_{L^{\lambda n}(Q_0)}^n > 2^{-\frac{n}{\lambda}}
                    C_0^{(n+1)/n}\norm{\Sigma}_{L^{\lambda}(\R^n)}
            \]
            and the above estimate yields
            \[
                \int_{2Q} \abs{Df}^n
                > C_1 r^{(1-\lambda^{-1}) n}
            \] 
            for all cubes $Q$ of side length $r$ containing $Q_0$, where $C_1>0$ is such that
            \[
            C_1^\frac{n}{n+1} = \frac{2^\frac{n^2}{n+1}}{C_0} \left(  \norm{Df}_{L^{\lambda n}(Q_0)}^{n^2/(n+1)} - 2^{-\frac{n^2}{\lambda(n+1)}} C_0 \norm{\Sigma}_{L^{\lambda}(\R^n)}^{n/(n+1)} \right) > 0.
            \] 

            If $f$ is non-constant and $K$-quasiregular, then we may assume $\Sigma \equiv 0$, and hence \eqref{eq:poly_growth_requirement} is always valid. This is the polynomial growth rate estimate shown by Bonk and Heinonen in \cite[Theorem 1.11]{Bonk-Heinonen_Acta}, and it is the standard method to argue that $\norm{Df}_{L^n(\R^n)} = \infty$ in this case. However, for solutions of \eqref{eq:QR_with_Sigma}, this argument appears to require a-priori information about the relative sizes of the $L^\lambda$-norms of $\abs{Df}^n$ and $\Sigma$ before it can be used to conclude that $\norm{Df}_{L^n(\R^n)} = \infty$. The above considerations also apply to mappings $f$ with a quasiregular value at $y_0 \in M$, just with $\dist^n(f, y_0) \Sigma$ replacing $\Sigma$ in the computations.
        \end{rem}

    \section{Proof of Proposition \ref{prop:technical_abstraction}}
    \label{sect:abstraction_proof}

    In this section, we prove Proposition \ref{prop:technical_abstraction}. As stated previously, the proof is mostly a re-tread of the ideas presented in \cite{Heikkila-Pankka_Elliptic} and \cite{Heikkila_Embedding-curves}, though performed in higher generality than has been done previously. 

    For the rest of this section, let $B \subset \R^n$ be an open ball, and let $M$ be a closed, connected, oriented Riemannian $m$-manifold, where $m \ge n \ge 2$. For every $a \in \R$, we use $a_B$ and $a_M$ to denote the constant maps $x \mapsto a$ on $B$ and $M$, respectively. We fix a H\"older sequence $p_0, \dots, p_{n} \in [1, \infty]$.
    
    In order to avoid repeating ourselves, we say that a triple $(G_j, A_j, C)$ is \emph{$(p_i)$-admissible} if $G_j \colon C^\infty(\wedge^* T^* M) \to L^{p_0, \dots, p_n}(\wedge^* T^* B)$ are generalized pull-backs for all $j \in \Z_{> 0}$, $A_j \in (0, \infty)$ satisfy $\lim_{j \to \infty} A_j = \infty$, and $C \in (0, \infty)$ satisfies \eqref{eq:Aj_condition} for all $k \in \{0, \dots, n\}$, $j \in \Z_{> 0}$, and $\alpha \in C^\infty(\wedge^k T^* M)$. Given a $(p_i)$-admissible triple $(G_j, A_j, C)$, we define \emph{normalized pull-backs} $G_j^! \colon C^\infty(\wedge^* T^* M) \to L^{p_0, \dots, p_n}(\wedge^* T^* B)$
    by 
    \[
    	G_j^! \alpha = A_j^{-\frac{k}{n}} G_j(\alpha)
    \]
    for every $\alpha \in C^\infty(\wedge^k T^* M)$. Note in particular that by \eqref{eq:Aj_condition} and the fact that $G_j$ are generalized pull-backs, we have
    \begin{equation}\label{eq:norm_pullback_props}
    	\smallnorm{G_j^! \alpha}_{L^{p_k}(B)} \le C \norm{\alpha}_{L^\infty(M)} \quad \text{and} \quad
    	G_j^! d\alpha = A_j^{-\frac{1}{n}} d G_j^! \alpha.
    \end{equation}
    
    Our first step is to prove that limits of exact forms are negligible under the normalized pull-backs $G_{j}^!$, where the proof proceeds in a manner similar to \cite[Lemma 3.2]{Heikkila-Pankka_Elliptic} and \cite[Lemma 3.4]{Heikkila_Embedding-curves}.
    
    \begin{lemma} \label{lem:limit_of_exact}
    	Let $B \subset \R^n$ be an open ball, and let $M$ be a closed, connected, oriented, Riemannian $m$-manifold with $m \ge n \ge 2$. Let $p_0, \dots, p_{n} \in [1, \infty]$ be a H\"older sequence, and let $(G_j, A_j, C)$ be $(p_i)$-admissible. Then for all $k\in \{1,\ldots,n\}$ and $\alpha \in C^\infty(\wedge^{k-1} T^* M)$, we have
    	\[
    		G_j^! d\alpha \weakto 0,
    	\]
    	where the convergence is $L^{p_k}$-weak if $p_k > 1$ and vague if $p_k = 1$.
    \end{lemma}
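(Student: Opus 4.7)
The plan is to exploit the key identity $G_j^! d\alpha = A_j^{-1/n} d G_j^! \alpha$ from \eqref{eq:norm_pullback_props}, which shifts the entire asymptotic behavior of the sequence into the scalar factor $A_j^{-1/n}$. Concretely, I will test against a smooth compactly supported form $\eta \in C^\infty_0(\wedge^{n-k} T^* B)$, use the definition of the weak exterior derivative to move $d$ from $G_j^! \alpha$ onto $\eta$, and observe that the resulting integral is uniformly bounded in $j$ while the prefactor $A_j^{-1/n}$ tends to zero. Finally I will invoke Lemma \ref{lem:smooth_to_weak_or_vague_conv} to upgrade convergence on smooth test forms to the required $L^{p_k}$-weak or vague convergence.

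More precisely, from the second relation in \eqref{eq:norm_pullback_props} the form $G_j^! \alpha \in L^{p_{k-1}}(\wedge^{k-1} T^* B)$ admits $A_j^{1/n} G_j^! d\alpha \in L^{p_k}(\wedge^k T^* B)$ as its weak exterior derivative. Thus for every $\eta \in C^\infty_0(\wedge^{n-k} T^* B)$,
\[
\int_B G_j^! d\alpha \wedge \eta = (-1)^k A_j^{-1/n} \int_B G_j^! \alpha \wedge d\eta.
\]
Since $B$ is bounded and $d\eta$ is smooth with compact support, H\"older's inequality together with the first part of \eqref{eq:norm_pullback_props} gives
\[
\Bigl| \int_B G_j^! \alpha \wedge d\eta \Bigr|
\le \norm{G_j^! \alpha}_{L^{p_{k-1}}(B)} \norm{d\eta}_{L^{p_{k-1}^*}(B)}
\le C \norm{\alpha}_{L^\infty(M)} \norm{d\eta}_{L^{p_{k-1}^*}(B)},
\]
a bound independent of $j$. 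Combined with $A_j^{-1/n} \to 0$, this shows
\[
\lim_{j \to \infty} \int_B G_j^! d\alpha \wedge \eta = 0
\qquad \text{for every } \eta \in C^\infty_0(\wedge^{n-k} T^* B).
\]

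Boundedness of $(G_j^! d\alpha)$ in $L^{p_k}(\wedge^k T^* B)$ follows directly from the first half of \eqref{eq:norm_pullback_props}: $\smallnorm{G_j^! d\alpha}_{L^{p_k}(B)} \le C \norm{d\alpha}_{L^\infty(M)}$. Hence Lemma \ref{lem:smooth_to_weak_or_vague_conv} applies and yields $G_j^! d\alpha \weakto 0$ in the $L^{p_k}$-weak sense when $p_k > 1$ and vaguely when $p_k = 1$. There is no serious obstacle here; the only point requiring care is keeping track of the sign and the $L^p$-pairing so that the weak-derivative identity may be invoked, which is unproblematic because $d\eta$ is smooth and compactly supported and hence lies in every $L^q(B)$.
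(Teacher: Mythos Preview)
Your proof is correct and follows essentially the same approach as the paper: test against $\eta \in C^\infty_0$, use $G_j^! d\alpha = A_j^{-1/n} dG_j^!\alpha$ to integrate by parts, bound the resulting integral uniformly via \eqref{eq:norm_pullback_props}, and upgrade via Lemma~\ref{lem:smooth_to_weak_or_vague_conv}. The only cosmetic difference is that you pair $G_j^!\alpha$ with $d\eta$ using the exponents $(p_{k-1}, p_{k-1}^*)$ while the paper uses $(p_k, p_k^*)$; your choice is arguably more natural since \eqref{eq:norm_pullback_props} directly controls $\smallnorm{G_j^!\alpha}_{L^{p_{k-1}}}$.
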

    \begin{proof}
    	Since $G_j^! (d\alpha)$ is bounded in $L^{p_k}(\wedge^{k} T^* B)$ by \eqref{eq:norm_pullback_props}, it suffices by Lemma \ref{lem:smooth_to_weak_or_vague_conv} to check weak convergence against all test forms $\eta \in C^\infty_0(\wedge^{n-1} T^* B)$. By Hölder's inequality, \eqref{eq:norm_pullback_props}, and $\lim_{j \to \infty} A_j = \infty$, we obtain
    	\begin{multline*}
    		\abs{\int_{B} \eta \wedge G_j^! d\alpha} 
    		= A_j^{-\frac{1}{n}} \abs{\int_{B} d\eta \wedge G_j^!\alpha} 
    		\le A_j^{-\frac{1}{n}} \int_{B} \abs{d\eta} \abs{G_j^!\alpha} \\
    		\le A_j^{-\frac{1}{n}} \norm{d\eta}_{L^{p_k^*}(B)} \norm{G_j^!\alpha}_{L^{p_k}(B)} 
    		\le C A_j^{-\frac{1}{n}} \norm{d\eta}_{L^{p_k^*}(B)} \norm{\alpha}_{L^\infty(M)} \to 0
    	\end{multline*}
    	as $j\to \infty$. This concludes the proof.
    \end{proof}
    
    We fix the notation $h\colon H^*_{\derham}(M)\to C^\infty(\wedge^* T^* M)$ for the map which takes every de Rham cohomology class $c \in H^*_{\derham}(M)$ to its harmonic representative $h(c)$. Moreover, we say that, given $k \in \{0, \dots, n\}$ and a linear subspace $V \subset H^k_\derham(M)$, a linear map $L_k \colon V \to L^{p_k}(\wedge^k T^*B) \cap \ker(d)$ is a \emph{cohomology limit of $(G_j^!)$} if, for every $c \in V$ and $\omega \in c$, we have $G_j^! \omega \weakto L_k(c)$, where the convergence is $L^{p_k}$-weak convergence if $p_k \in (1, \infty]$ and vague convergence if $p_k = 1$. Now, Lemma \ref{lem:limit_of_exact} yields the following existence and uniqueness theorem for cohomology limits. 
    
    \begin{lemma}\label{lem:limit_map_construction}
    	Let $B \subset \R^n$ be an open ball, and let $M$ be a closed, connected, oriented Riemannian $m$-manifold with $m \ge n \ge 2$. Let $k \in \{0, \ldots, n\}$, let $p_0, \dots, p_{n} \in [1, \infty]$ be a H\"older sequence, and let $(G_j, A_j, C)$ be $(p_i)$-admissible.
    	\begin{enumerate}[label=(\roman*)]
    		\item \label{enum:limit_uniqueness} (Uniqueness) If $V_1, V_2 \subset H^k_\derham(M)$ are linear subspaces and $L_k^i \colon V_i \to L^{p_k}(\wedge^k T^* B) \cap \ker(d)$, $i = 1, 2$, are cohomology limits of $(G_j^!)$, then $L_k^1(c) = L_k^2(c)$ for every $c \in V_1 \cap V_2$.
    		\item \label{enum:limit_existence} (Existence) If $p_k > 1$ and $V\subset H^k_\derham(M)$ is a linear subspace, then there exists a subsequence $(G_{j_i}^!)$ of $(G_j^!)$ that has a cohomology limit $L_k \colon V \to L^{p_k}(\wedge^k T^* B) \cap \ker(d)$. 
    	\end{enumerate} 
    \end{lemma}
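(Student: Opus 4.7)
For part (i), one observes that vague and $L^{p_k}$-weak limits are both unique: vague limits by Lemma \ref{lem:vague_limits_unique}, and $L^{p_k}$-weak limits because $L^{p_k}$-weak convergence implies vague convergence (on the bounded domain $B$, every $\eta \in C_0(\wedge^{n-k} T^* B)$ lies in $L^{p_k^*}(\wedge^{n-k} T^* B)$). Applied to the single sequence $(G_j^! \omega)$ for any fixed representative $\omega \in c$ where $c \in V_1 \cap V_2$, this forces $L_k^1(c) = L_k^2(c)$.

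For part (ii), I would exploit the finite-dimensionality of $H^k_\derham(M)$ (which holds since $M$ is closed). Fixing a basis $c_1, \dots, c_d$ of $V$, each sequence $(G_j^! h(c_l))_j$ is bounded in $L^{p_k}(\wedge^k T^* B)$ by \eqref{eq:norm_pullback_props}. Since $p_k > 1$, the Banach--Alaoglu theorem (together with reflexivity when $p_k < \infty$, or separability of $L^1(B)$ when $p_k = \infty$), combined with a diagonal argument over $l = 1, \dots, d$, yields a subsequence $(G_{j_i}^!)$ such that $G_{j_i}^! h(c_l) \weakto \omega_l$ in the $L^{p_k}$-weak sense for every $l$. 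Weak closedness passes to the limit: since $h(c_l)$ is closed and $G_{j_i}$ is a generalized pull-back, the form $G_{j_i}^! h(c_l)$ is weakly closed, so testing against $d\eta$ for $\eta \in C^\infty_0(\wedge^{n-k-1} T^*B)$ and passing to the limit gives $\omega_l \in \ker(d)$. I then define $L_k(c_l) := \omega_l$ and extend linearly to all of $V$.

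The remaining step is to verify that this $L_k$ is a cohomology limit for every $c \in V$ and every representative $\omega \in c$, not only for the chosen harmonic representatives of the basis. Writing $c = \sum_l a_l c_l$, linearity of the harmonic projection yields $h(c) = \sum_l a_l h(c_l)$, so linearity of $G_{j_i}^!$ gives $G_{j_i}^! h(c) \weakto L_k(c)$. For an arbitrary $\omega \in c$, Hodge theory on the closed Riemannian manifold $M$ supplies $\tau \in C^\infty(\wedge^{k-1} T^* M)$ with $\omega = h(c) + d\tau$, and then Lemma \ref{lem:limit_of_exact} gives $G_{j_i}^! d\tau \weakto 0$, so $G_{j_i}^! \omega \weakto L_k(c)$ as required. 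The conceptually essential step is this representative-independence at the end: without Lemma \ref{lem:limit_of_exact}, the construction would only yield a map on harmonic representatives rather than a well-defined map on cohomology classes, so that lemma is the main substantive ingredient of the proof.
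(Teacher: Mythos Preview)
Your proposal is correct and follows essentially the same approach as the paper: uniqueness via Lemma~\ref{lem:vague_limits_unique}, existence via weak sequential compactness on a finite basis of $V$ with limits extended linearly, weak closedness passed to the limit, and representative-independence via Lemma~\ref{lem:limit_of_exact}. The only minor point the paper treats explicitly that you gloss over is the edge case $k=0$, where Lemma~\ref{lem:limit_of_exact} does not apply (it is stated for $k \ge 1$) and there is no $\tau \in C^\infty(\wedge^{-1}T^*M)$; there the argument is instead trivial because every representative of $c \in H^0_\derham(M)$ already equals $h(c)$.
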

    \begin{proof}
    	For \ref{enum:limit_uniqueness}, let $c \in V_1 \cap V_2$ and select an arbitrary $\omega \in c$. Then we have $L_k^i(c) \in L^1(\wedge^k T^*B)$ since $B$ is finite-measured, and moreover $G_j^! \omega \weakto L_k^i(c)$ at least vaguely for $i=1,2$. Thus, $L_k^1(c) = L_k^2(c)$ a.e.\ in $B$ by the uniqueness of vague limits, which we recalled in Lemma \ref{lem:vague_limits_unique}. This completes the proof of \ref{enum:limit_uniqueness}.
    	
    	For \ref{enum:limit_existence}, let $c_1, \dots, c_{\nu}$ be a basis of $V$; note that since $M$ is closed, $H^k_\derham(M)$ is finite-dimensional, and hence $V$ has a finite basis. By \eqref{eq:norm_pullback_props}, the sequence $(G_{j}^!h(c_i))$ is bounded in $L^{p_k}(\wedge^k T^*B)$ for every $i \in \{1, \dots, \nu \}$. Since $p_k >  1$, the unit ball of $L^{p_k}(\wedge^k T^* B)$ is compact under $L^{p_k}$-weak convergence. Therefore, we find a subsequence $(G_{j_i}^!)$ such that $(G_{ j_i}^!h(c_l))$ converges weakly in $L^{p_k}(\wedge^k T^*B)$ to a form $L_k(c_l) \in L^{p_k}(\wedge^k T^*B)$ for every $l \in \{1, \dots, \nu\}$. We then define $L_k$ on $V$ by linearly extending these values $L_k(c_l)$. Since both $h$ and $G_{j_i}^!$ are linear, we obtain $L^{p_k}$-weak convergence $G_{j_i}^! h(c) \weakto L_k(c)$ for all $c \in V$.
    	
    	We then show that $L_k$ is indeed a cohomology limit of $(G_{j_i}^!)$. First, let $c \in V$. Then since $dh(c) = 0$, we have for all $\eta \in C^\infty_0(\wedge^{n-k-1} T^* B)$ that
    	\begin{multline*}
    		\int_B L_k(c) \wedge d\eta = \lim_{i \to \infty} \int_B G_{j_i}^! h(c) \wedge d\eta\\
    		= \lim_{i \to \infty} (-1)^{k+1} A_{j_i}^\frac{1}{n} \int_B G_{j_i}^! dh(c) \wedge \eta
    		= \lim_{i \to \infty} 0 = 0.
    	\end{multline*}
    	Thus, $L_k(c)$ is indeed weakly closed. Second, let $\omega \in c$. In the case $k = 0$, we in fact must have $\omega = h(c)$, and thus $G_{j_i}^! \omega \weakto L_k(c)$ trivially. If instead $k > 0$, then there exists a $\tau \in C^\infty(\wedge^{k-1} T^*M)$ for which $\omega = h(c) + d\tau$, and by Lemma \ref{lem:limit_of_exact}, we have
    	\[
    		\lim_{i \to \infty}  G_{j_i}^! \omega = \lim_{i \to \infty} (G_{j_i}^! h(c) + G_{j_i}^! d\tau) 
    		= L_k(c),
    	\]
    	where the limits are $L^{p_k}$-weak if $p_k > 1$ and vague if $p_k = 1$. The claim follows.
    \end{proof}
    
    Lemma \ref{lem:limit_map_construction} already allows us to construct the entire map $L$ of Proposition \ref{prop:technical_abstraction} in the case $p_n > 1$, in which case the only remaining step would be to show that $L$ is an algebra homomorphism. However, the fact that the existence part of Lemma \ref{lem:limit_map_construction} fails for $p_n = 1$ requires a workaround, and is the main reason why we have to consider the $n$:th layer $K^n(M)$ of the K\"unneth ideal.
    
    We then define our version of Sobolev-Poincar\'e limits, which were introduced in \cite{Heikkila-Pankka_Elliptic}. We fix a Poincar\'e homotopy operator $T$ on $B$ as in Proposition \ref{prop:homotopy_operator}. Then, given $k \in \{1, \dots, n\}$ and a linear subspace $V \subset H^k_\derham(M)$, we say that a linear map $\widehat{L}_k \colon V \to W^{d,p_k}(\wedge^{k-1} T^*B)$ is a \emph{Sobolev-Poincar\'e limit of $(G_j^!)$} if $d\widehat{L}_k$ is a cohomology limit of $(G_j^!)$ and $T G_j^! h(c) \to \widehat{L}_k(c)$ strongly in the $L^{p_k}$-sense for every $c \in V$.
    
    Next, similarly to \cite[Lemma 4.1]{Heikkila-Pankka_Elliptic} and \cite[Lemma 4.1]{Heikkila_Embedding-curves}, we show that if $(G_j^!)$ has a cohomology limit, then a subsequence of $(G_j^!)$ has a Sobolev-Poincar\'e limit.
    
    \begin{lemma} \label{lem:weak_is_exact}
    	Let $B \subset \R^n$ be an open ball, and let $M$ be a closed, connected, oriented Riemannian $m$-manifold with $m \ge n \ge 2$. Let $p_0, \dots, p_{n} \in [1, \infty]$ be a H\"older sequence, and let $(G_j, A_j, C)$ be $(p_i)$-admissible. If $(G_j^!)$ has a cohomology limit $L_k \colon V \to L^{p_k}(\wedge^k T^* B) \cap \ker(d)$, where $k \in \{1, \dots, n\}$ and $V \subset H^k_{\derham}(M)$, then there exists a subsequence $(G_{j_i}^!)$ of $(G_{j}^!)$ with a Sobolev-Poincar\'e limit $\widehat{L}_k \colon V \to W^{d, p_k}(\wedge^{k-1} T^* B)$ satisfying $d\widehat{L}_k = L_k$.
    \end{lemma}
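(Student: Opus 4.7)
The plan is to construct $\widehat{L}_k$ as a subsequential strong $L^{p_k}$-limit of $TG_j^! h(c)$ and then verify that its weak exterior derivative agrees with $L_k$. The starting observation is that $h(c)$ is harmonic and therefore closed, so the generalized pull-back relation $G_j d = dG_j$ gives $dG_j^! h(c) = A_j^{-k/n} G_j(dh(c)) = 0$. Hence $G_j^! h(c) \in W^{d,p_k}(\wedge^k T^* B)$, and the Poincar\'e homotopy identity (Proposition \ref{prop:homotopy_operator}\ref{item:homotopy}) collapses to
\[
    G_j^! h(c) = T\,dG_j^! h(c) + dTG_j^! h(c) = dTG_j^! h(c).
\]

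I will then fix a finite basis $c_1, \dots, c_\nu$ of $V$, which exists since $H^k_\derham(M)$ is finite-dimensional. By \eqref{eq:norm_pullback_props}, each sequence $(G_j^! h(c_l))_j$ is bounded in $L^{p_k}(\wedge^k T^* B)$, and the compactness of $T$ as an operator $L^{p_k} \to L^{p_k}$ (Proposition \ref{prop:homotopy_operator}\ref{item:homotopy_compactness}) combined with a diagonal extraction yields a subsequence $(G_{j_i}^!)$ such that $TG_{j_i}^! h(c_l)$ converges strongly in $L^{p_k}(\wedge^{k-1} T^* B)$ for every $l$ to a form I call $\widehat{L}_k(c_l)$. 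Extending linearly gives $\widehat{L}_k \colon V \to L^{p_k}(\wedge^{k-1} T^* B)$ with $TG_{j_i}^! h(c) \to \widehat{L}_k(c)$ strongly in $L^{p_k}$ for every $c \in V$, as required in the definition of a Sobolev--Poincar\'e limit.

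To finish I need to identify $d\widehat{L}_k(c)$ with $L_k(c)$. I will pair the identity $G_{j_i}^! h(c) = dTG_{j_i}^! h(c)$ with a test form $\eta \in C_0^\infty(\wedge^{n-k} T^* B)$, which gives
\[
    \int_B G_{j_i}^! h(c) \wedge \eta = (-1)^k \int_B TG_{j_i}^! h(c) \wedge d\eta.
\]
The left side tends to $\int_B L_k(c) \wedge \eta$ by the cohomology-limit hypothesis, since smooth compactly supported forms are admissible test forms for both $L^{p_k}$-weak and vague convergence. The right side tends to $(-1)^k \int_B \widehat{L}_k(c) \wedge d\eta$ by H\"older's inequality, using the strong $L^{p_k}$-convergence of $TG_{j_i}^! h(c)$ together with the fact that $d\eta \in C_0^\infty \subset L^{p_k^*}(\wedge^{n-k+1} T^* B)$ on the bounded set $B$. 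Matching the resulting identity with the definition of weak exterior derivative places $\widehat{L}_k(c)$ in $W^{d,p_k}(\wedge^{k-1} T^* B)$ with $d\widehat{L}_k(c) = L_k(c)$, and $d\widehat{L}_k$ then inherits the cohomology-limit property from $L_k$. The only delicate point is uniform treatment of the cases $p_k > 1$ and $p_k = 1$ (where cohomology limits are $L^{p_k}$-weak or merely vague, respectively), but this is handled automatically because the test-form computation uses only smooth compactly supported forms, which are valid in either regime.
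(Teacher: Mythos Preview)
Your proposal is correct and follows essentially the same approach as the paper: extract a strongly convergent subsequence of $TG_j^! h(c_l)$ on a finite basis via the compactness of $T$, extend linearly, and then pair the identity $G_{j_i}^! h(c) = dTG_{j_i}^! h(c)$ against smooth compactly supported test forms to identify $d\widehat{L}_k(c)$ with $L_k(c)$. Your explicit remark that smooth test forms handle both the $L^{p_k}$-weak and vague cases simultaneously is a helpful clarification the paper leaves implicit.
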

    
    \begin{proof}
    	Let $c_1,\ldots,c_\nu$ be a basis of $V$. Then for every $i \in \{1, \dots, \nu\}$, \eqref{eq:norm_pullback_props} yields that $\smallnorm{G_j^! h(c_i)}_{L^{p_k}(B)} \le C \norm{h(c_i)}_{L^\infty(M)}$ for all $j \in \Z_{> 0}$. Since $T$ is compact from $L^{p_{k}}(\wedge^{k} T^* B)$ to $L^{p_{k}}(\wedge^{k-1} T^* B)$ by Proposition \ref{prop:homotopy_operator}\ref{item:homotopy_compactness}, there  exist a subsequence $(G_{j_i}^!)$ and forms $\tau_1,\ldots,\tau_\nu \in L^{p_k}(\wedge^{k-1} T^* B)$ for which $T G_{j_i}^! h(c_l) \to \tau_l$ strongly in the $L^{p_k}$-norm as $j \to \infty$. 
    	
    	Thus, we define $\widehat{L}_k(c_l) =\tau_l$ and extend linearly, obtaining by linearity of $h$, $G_{j_i}^!$, and $T$ that $T G_{j_i}^! h(c) \to \widehat{L}_k(c)$ strongly in the $L^{p_k}$-norm for all $c \in V$. Moreover, if $c \in V$, Proposition \ref{prop:homotopy_operator}\ref{item:homotopy} and $d G_j^!h(c) = 0$ yield that $d T G_j^! h(c) = G_j^! h(c)$, and consequently 
    	\begin{multline*}
    		\int_{B} L_k(c) \wedge \eta 
    		= \lim_{i\to \infty} \int_{B} G_{j_i}^! h(c) \wedge \eta \\
    		= (-1)^{k+1} \lim_{i\to \infty} \int_{B} T G_{j_i}^! h(c) \wedge d\eta 
    		= (-1)^{k+1} \int_{B} \widehat{L}_k(c) \wedge d\eta
    	\end{multline*}
    	for all $\eta \in C_0^\infty(\wedge^{n-k-1} T^* B)$. Hence, $d\widehat{L}_k(c)=L_k(c)$ weakly. This implies both that $d\widehat{L}_k = L_k$, and that $d\widehat{L}_k$ is a cohomology limit of $(G_{j_i}^!)$, completing the proof of the claim.
    \end{proof}
    
    We then, as in \cite[Lemma 4.2]{Prywes_Annals} and \cite[Proposition 4.2]{Heikkila-Pankka_Elliptic}, show that the weak exterior derivative of a Sobolev–Poincar\'e limit commutes with the exterior product in a weak sense.
    
    \begin{lemma}\label{lem:weak_commutation_with_wedge}
    	Let $B \subset \R^n$ be an open ball, and let $M$ be a closed, connected, oriented Riemannian $m$-manifold with $m \ge n \ge 2$. Let $p_0, \dots, p_{n} \in [1, \infty]$ be a H\"older sequence, and let $(G_j, A_j, C)$ be $(p_i)$-admissible. Suppose that $k_1, k_2 \in \{1, \ldots, n\}$ with $k_1 + k_2 := k \le n$, $V_i \subset H^{k_i}_\derham(M)$ are linear subspaces, and $\widehat{L}_{k_i} \colon V_i \to W^{d, p_{k_i}}(\wedge^{k_i - 1} T^* B)$ are Sobolev-Poincar\'e limits of $(G_j^!)$, where $i=1,2$. Then for all $c_1 \in V_1$, $c_2 \in V_2$, and $\omega \in c_1 \wedge c_2$, we have
    	\[
    		G_j^! \omega \weakto d\widehat{L}_{k_1}(c_1) \wedge d\widehat{L}_{k_2}(c_2).
    	\]
    	Here, the convergence is $L^{p_{k}}$-weak convergence if $p_k > 1$, and vague convergence if $p_k = 1$.
    \end{lemma}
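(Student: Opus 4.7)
The plan is to reduce the statement to $\omega = h(c_1) \wedge h(c_2)$, exploit the multiplicativity of $G_j^!$, and then pass to the limit through a strong-weak pairing that uses the Sobolev-Poincar\'e limit on one factor and the cohomology limit on the other. Since the cup product in de Rham cohomology is represented by the wedge of closed representatives, $h(c_1) \wedge h(c_2)$ lies in the class $c_1 \wedge c_2$, so every $\omega \in c_1 \wedge c_2$ is of the form $\omega = h(c_1) \wedge h(c_2) + d\tau$ for some $\tau \in C^\infty(\wedge^{k-1} T^* M)$. By Lemma \ref{lem:limit_of_exact}, $G_j^! d\tau \weakto 0$, so it suffices to treat $\omega = h(c_1) \wedge h(c_2)$. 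The identity $A_j^{-(k_1+k_2)/n} = A_j^{-k_1/n} A_j^{-k_2/n}$ combined with multiplicativity of $G_j$ gives $G_j^! \omega = G_j^! h(c_1) \wedge G_j^! h(c_2)$, and since $h(c_i)$ is closed and $G_j$ commutes weakly with $d$, each factor $G_j^! h(c_i)$ is weakly closed.

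Since $k_1, k_2 \ge 1$ and $k_1 + k_2 \le n$, we have $k_1, k_2 \le n-1$, hence $p_{k_1}, p_{k_2} > 1$. By Proposition \ref{prop:homotopy_operator}\ref{item:homotopy} the homotopy identity applied to $G_j^! h(c_1)$ collapses to $G_j^! h(c_1) = d T G_j^! h(c_1)$. Fix a test form $\eta \in C^\infty_0(\wedge^{n-k} T^* B)$; this suffices by Lemma \ref{lem:smooth_to_weak_or_vague_conv} together with the uniform bound $\smallnorm{G_j^! \omega}_{L^{p_k}(B)} \le C \norm{\omega}_{L^\infty(M)}$ coming from \eqref{eq:Aj_condition}. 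An integration by parts, justified by $T G_j^! h(c_1) \in W^{d, p_{k_1}}(B)$ together with $G_j^! h(c_2) \wedge \eta \in W^{d, q}_0(B)$ for any $q \le p_{k_2}$ (note $p_{k_1}^* \le p_{k_2}$ by the H\"older-sequence property), produces
\[
    \int_B G_j^! \omega \wedge \eta
    = (-1)^k \int_B T G_j^! h(c_1) \wedge G_j^! h(c_2) \wedge d\eta.
\]

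By the definition of Sobolev-Poincar\'e limit, $T G_j^! h(c_1) \to \widehat{L}_{k_1}(c_1)$ strongly in $L^{p_{k_1}}(B)$; and since $d\widehat{L}_{k_2}$ is a cohomology limit, $G_j^! h(c_2) \weakto d\widehat{L}_{k_2}(c_2)$ weakly in $L^{p_{k_2}}(B)$. Splitting off the strong-convergence error $T G_j^! h(c_1) - \widehat{L}_{k_1}(c_1)$ and estimating by a threefold H\"older inequality with exponents $p_{k_1}, p_{k_2}, r$, where $1/r = 1 - 1/p_{k_1} - 1/p_{k_2} \ge 0$ by the H\"older-sequence condition, kills this error using the boundedness of $\smallnorm{G_j^! h(c_2)}_{L^{p_{k_2}}(B)}$ from \eqref{eq:norm_pullback_props} and $\smallnorm{d\eta}_{L^r(B)} < \infty$. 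The remaining term $\int_B \widehat{L}_{k_1}(c_1) \wedge G_j^! h(c_2) \wedge d\eta$ converges to $\int_B \widehat{L}_{k_1}(c_1) \wedge d\widehat{L}_{k_2}(c_2) \wedge d\eta$ by weak $L^{p_{k_2}}$ convergence, once one observes that the test form $\widehat{L}_{k_1}(c_1) \wedge d\eta$ lies in $L^{p_{k_1}}(B) \subset L^{p_{k_2}^*}(B)$; the latter inclusion holds on the bounded set $B$ because $p_{k_1} \ge p_{n-k_2} \ge p_{k_2}^*$, where the first inequality uses monotonicity of the H\"older sequence and the second is the general identity $p_k^* \le p_{n-k}$.

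A second integration by parts, justified by the fact that $d \widehat{L}_{k_2}(c_2) = L_{k_2}(c_2)$ is weakly closed, transfers the derivative from $d\eta$ onto $\widehat{L}_{k_1}(c_1)$ and produces
\[
    (-1)^k \int_B \widehat{L}_{k_1}(c_1) \wedge d\widehat{L}_{k_2}(c_2) \wedge d\eta
    = \int_B d\widehat{L}_{k_1}(c_1) \wedge d\widehat{L}_{k_2}(c_2) \wedge \eta.
\]
Combined with Lemma \ref{lem:smooth_to_weak_or_vague_conv} and the uniform $L^{p_k}$ bound on $G_j^! \omega$, this yields the stated $L^{p_k}$-weak (respectively vague) convergence. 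The principal obstacle is the careful bookkeeping of H\"older exponents along the strong-weak pairing and through the two integrations by parts, together with justifying the integration-by-parts formulae at the low regularity involved; the H\"older-sequence inequalities are exactly what make the argument close.
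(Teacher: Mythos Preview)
Your proof is correct and follows essentially the same approach as the paper: reduce to $\omega = h(c_1)\wedge h(c_2)$ via Lemma~\ref{lem:limit_of_exact}, use multiplicativity of $G_j^!$, write $G_j^! h(c_i) = dTG_j^! h(c_i)$ via the Poincar\'e homotopy, and pass to the limit through integration by parts and the strong convergence built into the Sobolev--Poincar\'e limit. The only organizational difference is that the paper splits the difference $G_j^! h(c_1) \wedge G_j^! h(c_2) - d\widehat{L}_{k_1}(c_1) \wedge d\widehat{L}_{k_2}(c_2)$ symmetrically into two terms and uses the strong $L^{p_{k_i}}$-convergence of $TG_j^! h(c_i)$ on \emph{both} factors, whereas you integrate by parts once up front and then use strong convergence on the first factor and $L^{p_{k_2}}$-weak convergence (the cohomology-limit property of $d\widehat{L}_{k_2}$) on the second; your exponent check $p_{k_1} \ge p_{n-k_2} \ge p_{k_2}^*$ is exactly what makes this asymmetric pairing go through.
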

    
    \begin{proof}
    	Let $c_1 \in V_1$, $c_2 \in V_2$, and $\omega \in c_1 \wedge c_2$. By \eqref{eq:norm_pullback_props}, and Lemma \ref{lem:smooth_to_weak_or_vague_conv}, it suffices to prove that
    	\[
    		\lim_{j\to \infty} \int_{B} \eta \wedge 
    			\left( G_j^! \omega - d\widehat{L}_{k_1}(c_1) \wedge d\widehat{L}_{k_2}(c_2) \right) 
    		= 0
    	\]
    	for all $\eta \in C_0^\infty(\wedge^{n-k} T^* B)$. Moreover, since $\omega \in c_1 \wedge c_2$, we have $\omega = h(c_1) \wedge h(c_2) + d\tau$ for some $\tau \in C^\infty(\wedge^{k-1} T^* B)$. Thus, due to Lemma \ref{lem:limit_of_exact}, and since $G_j^!$ respect the wedge product, it suffices to show that
    	\[
    		\lim_{j\to \infty} \int_{B} \eta \wedge 
    			\left( G_j^! h(c_1) \wedge G_j^! h(c_2) 
    				- d\widehat{L}_{k_1}(c_1) \wedge d\widehat{L}_{k_2}(c_2)\right)  
    		= 0
    	\]
    	for all $\eta \in C_0^\infty(\wedge^{n-k} T^* B)$.
    	
    	By the triangle inequality, it suffices to find an upper bound tending to zero for
    	\begin{multline}\label{eq:terms_to_estimate_wedge}
    	\abs{ \int_{B} \eta \wedge (d\widehat{L}_{k_1} (c_1) - G_j^! h(c_1)) \wedge d\widehat{L}_{k_2}(c_2) }\\
    	+ \abs{ \int_{B} \eta \wedge G_j^! h(c_1) \wedge (d\widehat{L}_{k_2}(c_2) - G_j^! h(c_2)) }.
    	\end{multline}
    	To show this for the first term in \eqref{eq:terms_to_estimate_wedge}, we use Proposition \ref{prop:homotopy_operator}\ref{item:homotopy} to obtain that
    	\begin{multline*}
    		\abs{ \int_{B} \eta \wedge (d\widehat{L}_{k_1} (c_1) - G_j^! h(c_1)) 
    			\wedge d\widehat{L}_{k_2}(c_2)  } \\
    		= \abs{ \int_{B} \eta \wedge d (\widehat{L}_{k_1} (c_1) - T G_j^! h(c_1)) 
    			\wedge d\widehat{L}_{k_2}(c_2) } \\
    		= \abs{ \int_{B} d\eta \wedge(\widehat{L}_{k_1} (c_1) - T G_j^! h(c_1)) 
    			\wedge d\widehat{L}_{k_2}(c_2) } \\
    		\le C(n) \norm{d\eta}_{L^\infty(\B_2^n)} \int_{B} \abs{ d\widehat{L}_{k_2}(c_2)}
    			\abs{\widehat{L}_{k_1} (c_1) - T G_j^! h(c_1)}.
    	\end{multline*}
    	Then, by \eqref{eq:Holder_seq_est} along with the inequality $\norm{\varphi}_{L^1(B)} \le C(B) \norm{\varphi}_{L^{p_k}(B)}$ for all $\varphi \in L^1(B)$, we further have
    	\begin{multline*}
    		\int_{B} \abs{ d\widehat{L}_{k_2}(c_2)}
    			\abs{\widehat{L}_{k_1} (c_1) - T G_j^! h(c_1)} \\
    		\le C(B) \norm{d\widehat{L}_{k_2}(c_2)}_{L^{p_{k_2}}(B)} 
    			\norm{\widehat{L}_{k_1} (c_1) - T G_j^! h(c_1)}_{L^{p_{k_1}}(B)}.
    	\end{multline*}
    	Since $TG_j^! h(c_1) \to \widehat{L}(c_1)$ strongly in $L^{p_{k_1}}(\wedge^{k_1} T^* B)$, this upper bound tends to zero as $j \to \infty$.
    	
    	For the second term in \eqref{eq:terms_to_estimate_wedge}, a completely analogous argument to the above yields
    	\begin{multline*}
    		\abs{ \int_{B} \eta \wedge G_j^! h(c_1) \wedge (d\widehat{L}_{k_2}(c_2) - G_j^! h(c_2)) }\\
    		\le C(B) \norm{d\eta}_{L^\infty(\B_2^n)} \norm{G_j^! h(c_1)}_{L^{p_{k_1}}(B)}
    			\norm{\widehat{L}_{k_2} (c_2) - T G_j^! h(c_2)}_{L^{p_{k_2}}(B)}.
    	\end{multline*}
    	Thus, with the additional use of \eqref{eq:norm_pullback_props} which shows that $\smallnorm{G_j^! h(c_1)}_{L^{p_{k_1}}(B)}$ is uniformly bounded in $j$, this upper bound also tends to zero as $j \to \infty$. Hence, the claim follows.
    \end{proof}
    
    The first consequence of Lemma \ref{lem:weak_commutation_with_wedge} is that cohomology limits commute with the wedge product; see also the corresponding results in \cite[Lemma 4.3]{Heikkila-Pankka_Elliptic} and \cite[Lemma 4.3]{Heikkila_Embedding-curves}.
    
    \begin{lemma}\label{lem:commutation_with_wedge}
    	Let $B \subset \R^n$ be an open ball, and let $M$ be a closed, connected, oriented Riemannian $m$-manifold with $m \ge n \ge 2$. Let $p_0, \dots, p_{n} \in [1, \infty]$ be a H\"older sequence, and let $(G_j, A_j, C)$ be $(p_i)$-admissible. Let $k_0, k_1, k_2 \in \{0, \dots, n\}$ with $k_1 + k_2 = k_0$, and let $L_{k_i} \colon V_i \to L^{p_{k_i}}(\wedge^{k_i} T^* B) \cap \ker(d)$ be cohomology limits of $(G_j^!)$, where $V_i \subset H^{k_i}_\derham(M)$, $i = 0, 1, 2$. Then 
    	\[
    		L_{k_0}(c_1 \wedge c_2) = L_{k_1}(c_1) \wedge L_{k_2}(c_1)
    	\]
    	for all $c_1 \in V_1$ and $c_2 \in V_2$ with $c_1 \wedge c_2 \in V_0$.
    \end{lemma}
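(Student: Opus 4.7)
The plan is to reduce the identity to an application of Lemma \ref{lem:weak_commutation_with_wedge} together with the uniqueness of weak (or vague) limits provided by Lemma \ref{lem:vague_limits_unique}. First, I dispose of the degenerate cases when $k_1 = 0$ or $k_2 = 0$. Suppose $k_1 = 0$; since $M$ is connected, the harmonic representative of any $c_1 \in V_1 \subset H^0_\derham(M)$ is a constant function $a_M$ for some $a \in \R$, so $G_j^! h(c_1) = a \cdot G_j(1_M) = a_B$ for every $j$, which forces $L_0(c_1) = a_B$. Then $c_1 \wedge c_2 = a c_2$, and by linearity of $L_{k_0}$ we obtain $L_{k_0}(c_1 \wedge c_2) = a L_{k_2}(c_2) = L_0(c_1) \wedge L_{k_2}(c_2)$. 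The case $k_2 = 0$ is identical.

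Now assume $k_1, k_2 \ge 1$. Apply Lemma \ref{lem:weak_is_exact} to the sequence $(G_j^!)$ and the cohomology limit $L_{k_1}$ to extract a subsequence admitting a Sobolev-Poincar\'e limit $\widehat{L}_{k_1} \colon V_1 \to W^{d, p_{k_1}}(\wedge^{k_1-1} T^* B)$ with $d\widehat{L}_{k_1} = L_{k_1}$. Since any subsequence of $(G_j^!)$ retains $L_{k_2}$ as a cohomology limit on $V_2$, I apply Lemma \ref{lem:weak_is_exact} once more to extract a further subsequence $(G_{j_i}^!)$ with a Sobolev-Poincar\'e limit $\widehat{L}_{k_2}$ satisfying $d\widehat{L}_{k_2} = L_{k_2}$, while $\widehat{L}_{k_1}$ remains a Sobolev-Poincar\'e limit along $(G_{j_i}^!)$ and $L_{k_0}$ remains a cohomology limit.

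Finally, fix $c_1 \in V_1$, $c_2 \in V_2$ with $c_1 \wedge c_2 \in V_0$, and choose $\omega \in c_1 \wedge c_2$. By Lemma \ref{lem:weak_commutation_with_wedge} applied to $(G_{j_i}^!)$,
\[
G_{j_i}^! \omega \weakto d\widehat{L}_{k_1}(c_1) \wedge d\widehat{L}_{k_2}(c_2) = L_{k_1}(c_1) \wedge L_{k_2}(c_2),
\]
while the definition of the cohomology limit $L_{k_0}$ yields $G_{j_i}^! \omega \weakto L_{k_0}(c_1 \wedge c_2)$. Both limits belong to $L^1(\wedge^{k_0} T^* B)$: the first by the H\"older-sequence estimate \eqref{eq:Holder_seq_est} together with the finite measure of $B$, and the second directly from $L^{p_{k_0}}(B) \subset L^1(B)$. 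Lemma \ref{lem:vague_limits_unique} then forces $L_{k_0}(c_1 \wedge c_2) = L_{k_1}(c_1) \wedge L_{k_2}(c_2)$ almost everywhere, completing the proof. I expect the main subtlety to be purely bookkeeping, namely ensuring that the two successive applications of Lemma \ref{lem:weak_is_exact} preserve all the previously established limits along the nested subsequences; no substantive obstacle arises beyond this.
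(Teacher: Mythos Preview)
Your proof is correct and follows essentially the same route as the paper's, including the split into the degenerate cases $k_1 = 0$ or $k_2 = 0$ and the use of Lemmas \ref{lem:weak_is_exact} and \ref{lem:weak_commutation_with_wedge} along a subsequence for $k_1, k_2 \ge 1$. The only minor slip is in the line ``by linearity of $L_{k_0}$ we obtain $L_{k_0}(c_1 \wedge c_2) = a L_{k_2}(c_2)$'': linearity gives $L_{k_0}(ac_2) = aL_{k_0}(c_2)$, and you then need the uniqueness part of Lemma \ref{lem:limit_map_construction} (since $c_2 \in V_0 \cap V_2$ when $a \ne 0$) to identify $L_{k_0}(c_2)$ with $L_{k_2}(c_2)$.
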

    
    \begin{proof}
    	Consider first the case $k_1 = 0$. In this case, $h(c_1) \in C^\infty(\wedge^0 T^* M) = C^\infty(M, \R)$ is a constant function $a_M$ for some $a \in \R$, and thus $c_1 \wedge c_2 = ac_2$. We note that by $\R$-linearity of $G_j$ and the assumption $G_j(1_M) = 1_B$, we have $G_j^!(h(c_1)) = G_j(a_M) = a G_j(1_M) = a_B$ for every $j$. Since $G_j^!(h(c_1)) \weakto L_{k_1}(c_1)$, we have that $L_{k_1}(c_1) = a_B$; see again Lemma \ref{lem:vague_limits_unique}. Thus, using the $\R$-linearity of cohomology limits and the uniqueness-part of Lemma \ref{lem:limit_map_construction}, we have
    	\[
    		L_{k_0}(c_1 \wedge c_2) = L_{k_0}(a c_2) = a L_{k_0}(c_2) = a L_{k_2}(c_2) = L_{k_1}(c_1) \wedge L_{k_2}(c_2).
    	\] 
    	This completes the proof when $k_1 = 0$; the case $k_2 = 0$ is analogous.
    	
    	Thus, it remains to prove the claim when $k_1, k_2 \ge 1$. In this case, by using Lemma \ref{lem:weak_is_exact}, we find a subsequence $G_{j_i}$ of $G_j$ which has Sobolev-Poincar\'e limits $\widehat{L}_{k_i}$ with $d\widehat{L}_{k_i} = L_{k_i}$ for $i = 1, 2$. By using Lemma \ref{lem:weak_commutation_with_wedge} and the fact that $L_{k_0}$ is a cohomology limit of $(G_j^!)$, we thus have
    	\[
    		L_{k_0}(c_1 \wedge c_2) = \lim_{i \to \infty} G_{j_i}^! h(c_1 \wedge c_2)
    		= d\widehat{L}_{k_1}(c_1) \wedge d\widehat{L}_{k_2}(c_2) = L_{k_1}(c_1) \wedge L_{k_2}(c_2);
    	\]
    	here, all limits are at least vague, and hence unique by Lemma \ref{lem:vague_limits_unique}. The claim follows.
    \end{proof}
    
    The second main consequence of Lemma \ref{lem:weak_commutation_with_wedge} is that it allows us to construct a cohomology limit $L_n \colon K^n(M) \to L^1(\wedge^n T^* B)$ in the case $p_n = 1$.
    
    \begin{lemma} \label{lem:extension_to_K}
    	Let $B \subset \R^n$ be an open ball, and let $M$ be a closed, connected, oriented Riemannian $m$-manifold with $m \ge n \ge 2$. Let $p_0, \dots, p_{n} \in [1, \infty]$ be a H\"older sequence, and let $(G_j, A_j, C)$ be $(p_i)$-admissible. Let $L_k \colon H^k_\derham(M) \to L^{p_k}(\wedge^k T^* B)$, $k \in \{1, \dots, n-1\}$, be cohomology limits of $(G_j^!)$. Then there exists a subsequence $(G_{j_i}^!)$ of $(G_j^!)$ with a cohomology limit $L_n \colon K^n(M) \to L^{p_n}(\wedge^k T^* B) \cap \ker(d)$.
    \end{lemma}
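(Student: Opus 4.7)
The approach is to build $L_n$ on $K^n(M)$ by combining the already-constructed cohomology limits $L_{k_1}$ and $L_{k_2}$ via the wedge product, with Lemma~\ref{lem:weak_commutation_with_wedge} providing the mechanism for identifying the pointwise limit of $G_{j_i}^!$ on product classes. First I would iteratively apply Lemma~\ref{lem:weak_is_exact} for $k = 1, \dots, n-1$ to the given cohomology limits $L_k$ (which are defined on the finite-dimensional space $H^k_\derham(M)$, since $M$ is closed). Taking nested subsequences and relabelling yields a single subsequence, still denoted $(G_{j_i}^!)$, along which Sobolev--Poincar\'e limits $\widehat{L}_k \colon H^k_\derham(M) \to W^{d, p_k}(\wedge^{k-1} T^* B)$ exist with $d\widehat{L}_k = L_k$ for every $k \in \{1, \dots, n-1\}$.

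Next I would construct $L_n$ on $K^n(M)$ by setting, for each $c \in K^n(M)$ and a chosen decomposition $c = \sum_{l} c_1^{(l)} \wedge c_2^{(l)}$ with $c_i^{(l)} \in H^{k_i^{(l)}}_\derham(M)$ and $1 \le k_i^{(l)} \le n-1$,
\[
    L_n(c) := \sum_l L_{k_1^{(l)}}(c_1^{(l)}) \wedge L_{k_2^{(l)}}(c_2^{(l)}).
\]
The H\"older sequence property $p_{k_1^{(l)}}^{-1} + p_{k_2^{(l)}}^{-1} \le p_n^{-1}$ combined with \eqref{eq:norm_pullback_props} ensures that each summand lies in $L^{p_n}(\wedge^n T^* B)$, and the requirement $L_n(c) \in \ker(d)$ is automatic since $\wedge^{n+1} T^* B = 0$ on an open subset of $\R^n$. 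To verify that $L_n$ is actually a cohomology limit of $(G_{j_i}^!)$, I would take an arbitrary $\omega \in c$, note that $\omega = \sum_l h(c_1^{(l)}) \wedge h(c_2^{(l)}) + d\tau$ for some $\tau \in C^\infty(\wedge^{n-1} T^* M)$, and combine multiplicativity of $G_j$ with the identity $A_j^{-1} = A_j^{-k_1^{(l)}/n} A_j^{-k_2^{(l)}/n}$ to write
\[
    G_{j_i}^! \omega = \sum_l G_{j_i}^! h(c_1^{(l)}) \wedge G_{j_i}^! h(c_2^{(l)}) + G_{j_i}^! d\tau.
\]
Lemma~\ref{lem:weak_commutation_with_wedge} applied to each product and Lemma~\ref{lem:limit_of_exact} applied to the exact remainder then give vague convergence of $G_{j_i}^! \omega$ to the sum defining $L_n(c)$.

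Well-definedness of $L_n$ on $K^n(M)$ -- independence of the chosen decomposition -- then comes for free from the uniqueness of vague limits (Lemma~\ref{lem:vague_limits_unique}), since any two decompositions produce the same vague limit of $G_{j_i}^! h(c)$; linearity follows by the same token. Finally, when $p_n > 1$, the uniform $L^{p_n}$-bound from \eqref{eq:norm_pullback_props} together with Lemma~\ref{lem:smooth_to_weak_or_vague_conv} upgrades the vague convergence to $L^{p_n}$-weak convergence, as required by the definition of a cohomology limit. The only delicate step is bookkeeping around well-definedness: elements of $K^n(M)$ admit many distinct decompositions into products, so one must route their equivalence through the uniqueness of vague limits rather than by manipulating the formal sums algebraically.
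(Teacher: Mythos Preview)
Your proposal is correct and follows essentially the same route as the paper's proof: both iterate Lemma~\ref{lem:weak_is_exact} to obtain Sobolev--Poincar\'e limits for $k=1,\dots,n-1$, then invoke Lemma~\ref{lem:weak_commutation_with_wedge} on product classes together with Lemma~\ref{lem:limit_of_exact} on the exact remainder. The only presentational difference is that the paper fixes a basis of $K^n(M)$ consisting of single products $e_i \wedge f_i$ and extends linearly (thereby sidestepping the well-definedness discussion), whereas you define $L_n(c)$ via an arbitrary decomposition and recover well-definedness and linearity from the uniqueness of vague limits; both are valid.
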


    \begin{proof}
    	Note that $H^n_\derham(M)$ and hence also $K^n(M)$ are finite-dimensional. Moreover, $K^n(M)$ is the linear span of elements of the form $e \wedge f$, where $e \in H^{k}_\derham(M)$ and $f \in H^{n-k}_\derham(M)$ for some $k \in \{1, \dots, n-1\}$. Since every generating set of a linear space contains a basis, we may fix a basis $c_1, \dots, c_\nu$ of $K^n(M)$ consisting of elements of the form  $c_i = e_i \wedge f_i$, where $e_i \in H^{k_i}_\derham(M)$ and $f_i \in H^{n-k_i}_\derham(M)$ for some $k_i \in \{1, \dots, n-1\}$. We define
    	\[
    		L_n(c_i) = L_{k_i}(e_i) \wedge L_{n-k_i}(f_i)
    	\]
    	for $i \in \{1, \dots, \nu \}$ and extend linearly, obtaining a linear map $L_n \colon K^n(M) \to L^{p_n}(\wedge^n T^* B)$. We note that since the exterior product of weakly closed forms in $L^{p_0, \dots, p_k}(\wedge^* T^*B)$ is weakly closed, it follows that $L_n(c) \in L^{p_n}(\wedge^k T^* B) \cap \ker(d)$ for every $c \in K^n(M)$. 
    	
    	We then use Lemma \ref{lem:weak_is_exact} repeatedly to find a subsequence $(G_{j_i}^!)$ of $(G_j^!)$ such that for all $k \in \{1, \dots, n-1\}$, $(G_{j_i}^!)$ has a Sobolev-Poincar\'e limit $\widehat{L}_k \colon H^k_\derham(M) \to W^{d,p_k}(\wedge^{k-1} T^* B)$ with $d\widehat{L}_k = L_k$. Now, suppose that $c \in K^n(M)$ and $\omega \in c$, in which case we can write
    	\[
    		\omega = a_1 h(c_1) + \dots + a_\nu h(c_\nu) + d\tau
    	\]
    	where $a_l \in \R$ for all $l \in \{1, \dots \nu\}$ and $\tau \in C^{\infty}(\wedge^{n-1} M)$. Since $G_{j_i}^!d\tau \weakto 0$ by Lemma \ref{lem:limit_of_exact}, and since $G_{j_i}^!h(c_l) \weakto L_n(c_l)$ for all $l \in \{1, \dots, \nu\}$ by Lemma \ref{lem:weak_commutation_with_wedge} and the identity $d\widehat{L}_k = L_k$, it follows that $G_{j_i}^!(\omega) \weakto L_n(c)$, where the convergence is $L^{p_n}$-weak if $p_n > 1$ and vague if $p_n = 1$. That is, $L_n$ is a cohomology limit of $(G_{j_i}^!)$, completing the proof.
    \end{proof}
    
    We are now ready to prove Proposition \ref{prop:technical_abstraction}.
    
    \begin{proof}[Proof of Proposition \ref{prop:technical_abstraction}]
    	For part \ref{enum:cohom_map_into_forms}, suppose first that $p_n > 1$. In this case, we use Lemma \ref{lem:limit_map_construction} to find a subsequence $(G_{j_i})$ of $(G_j)$ such that $(G_{j_i}^!)$ has a cohomology limit $L_k \colon H^k_\derham(M) \to L^p(\wedge^k T^* M) \cap \ker(d)$ for all $k \in \{1, \dots, n\}$. Now, for all $c \in H^k_\derham(M)$, $k \in \{0, \dots, m\}$, we define
    	\[
    		L(c) = \begin{cases}
    			L_k(c) & k \le n,\\
    			0 & k > n,
    		\end{cases}
    	\]
    	obtaining a graded linear map $H^*_\derham(M) \to L^{p_0, \dots, p_k}(\wedge^* T^* B) \cap \ker(d)$. Since $L_k$ are cohomology limits of $(G_{j_i}^!)$, it remains to check that $L$ commutes with the wedge product. For this, suppose that $c_i \in H^{k_i}_\derham(M)$ with $i = 1, 2$. Then if $k_1 + k_2 \le n$, we have $L(c_1 \wedge c_2) = L(c_1) \wedge L(c_2)$ by Lemma \ref{lem:commutation_with_wedge}, and if $k_1 + k_2 > n$, we have $L(c_1 \wedge c_2) = 0 = L(c_1) \wedge L(c_2)$ trivially. Thus, $L$ satisfies all desired properties.
    	
    	We then consider the slightly trickier case $p_n = 1$. In this case, we use Lemma \ref{lem:limit_map_construction} followed by Lemma \ref{lem:extension_to_K} to find a subsequence $(G_{j_i})$ of $(G_j)$ such that $(G_{j_i}^!)$ has cohomology limits $L_k \colon H^k_\derham(M) \to L^p(\wedge^k T^* M) \cap \ker(d)$ for $k \in \{1, \dots, n-1\}$ and $L_n \colon K^n(M) \to L^p(\wedge^k T^* M) \cap \ker(d)$. We fix any linear projection $\pi \colon H^n_\derham(M) \to K^n(M)$, and define for all $c \in H^k_\derham(M)$, $k \in \{0, \dots, m\}$, that
    	\[
    		L(c) = \begin{cases}
    			L_k(c) & k < n,\\
    			L_n(\pi(c)) & k = n,\\
    			0 & k > n,
    		\end{cases}
    	\]
    	again obtaining a graded linear map $H^*_\derham(M) \to L^{p_0, \dots, p_k}(\wedge^* T^* B)  \cap \ker(d)$. 
    	
    	Since $L_k$ are cohomology limits of $(G_{j_i}^!)$, and since $\pi(c) = c$ for $c \in K^n(M)$, it again only remains to check that $L$ respects the wedge product. Thus, suppose that $c_i \in H^{k_i}_\derham(M)$ with $i = 1, 2$. If $k_1 + k_2 < n$, then Lemma \ref{lem:commutation_with_wedge} again yields $L(c_1 \wedge c_2) = L(c_1) \wedge L(c_2)$, and if $k_1 + k_2 > n$, we again trivially have $L(c_1 \wedge c_2) = 0 = L(c_1) \wedge L(c_2)$. In the remaining case $k_1 + k_2 = n$, if both $k_1 > 0$ and $k_2 > 0$, then $c_1 \wedge c_2 \in K^n(M)$. Hence, Lemma \ref{lem:commutation_with_wedge} yields
    	\[
    		L(c_1) \wedge L(c_2) = L_n(c_1 \wedge c_2) = L_n(\pi(c_1 \wedge c_2)) = L(c_1 \wedge c_2).
    	\]
    	By symmetry, the final case to consider is $k_1 = 0, k_2 = n$. In this case, $h(c_1)$ is a constant function $a_M$, $a \in \R$. Recall that in this case, $a_B = G_{j_i}^!(a_M) \weakto L_0(c_1)$ vaguely, which by the uniqueness of vague limits from Lemma \ref{lem:vague_limits_unique} implies that $L_0(c_1) = a_B$. Thus,
    	\[
    		L(c_1 \wedge c_2) = a L(c_2) = L_0(c_1) \wedge L(c_2),
    	\]
    	completing the proof of part \ref{enum:cohom_map_into_forms}.
    	
    	For part \ref{enum:cohom_map_into_ext_alg}, we suppose that $\omega \in C^\infty(\wedge^k T^* M)$ is closed and $L([\omega])$ is not a.e.\ vanishing, with our objective being to construct a graded homomorphism of algebras $\Phi \colon H^*_\derham(M) \to \wedge^* \R^n$ with $\Phi([\omega]) \ne 0$. Note that we must have that $[\omega] \ne 0$. Thus, we may select a graded linear basis $c_1,\ldots,c_\nu$ of $H_\derham^*(M)$ so that $[\omega]$ is one of the basis elements. 
    	
    	We fix Borel representatives for $Lc_i$ and $L(c_i \wedge c_l)$, where $i,l=1,\ldots,\nu$. Let
    	\[
    		E = \left\{ x\in B \colon (L([\omega]))_x  \ne 0 \right\}
    	\]
    	and
    	\[
    	E_{i,l} = \{ x\in B^n \colon (Lc_i)_x \wedge (Lc_l)_x = (L(c_i \wedge c_l))_x \}
    	\]
    	for $i,l=1,\ldots,\nu$. Now $E_{i,l}$ are sets of full measure for all $(i,l)\in \{1,\ldots,\nu\}^2$, and $E$ has positive measure. Thus, there exists a $x_0 \in E \cap \bigcap_{i,l=1}^\nu E_{i,l}$.
    	We then let $\Phi \colon H_\derham^*(M)\to \wedge^* \R^n$ be the linear map defined by 
    	\[
    		\Phi(c_i) = (Lc_i)_{x_0} \qquad \text{for } i=1,\ldots,\nu.
    	\]
    	By definition, the map $\Phi$ is graded. Moreover, we have
    	\[
    	\Phi(c_i \wedge c_l) = (L(c_i \wedge c_l))_{x_0} = (Lc_i)_{x_0} \wedge (Lc_l)_{x_0} = \Phi (c_i) \wedge \Phi (c_l)
    	\]
    	for all $(i,l)\in \{1,\ldots,\nu\}^2$, since $x_0 \in E_{i,l}$. Hence, the map $\Phi$ is an algebra homomorphism by linearity. Moreover, since $[\omega]$ is a basis element, we have
    	\[
    		\Phi ([\omega]) = (L([\omega]))_{x_0} \ne 0
    	\]
    	since $x_0 \in E$.

        Finally, we point out why $\Phi$ is injective when $k = m = n$. In this case, since $\Phi([\omega]) \ne 0$, we must have $[\omega] = \lambda [\vol_M]$ for some $\lambda \ne 0$. Now, if we suppose towards contradiction that we have $\Phi(c) = 0$ with $c \in H^l_\derham(M) \setminus \{0\}$ for some $l \in \{0, \dots, m\}$, then Poincar\'e duality yields a $c' \in H^{m-l}_\derham(M)$ with $c \wedge c' = [\omega]$. As $\Phi$ is a homomorphism of algebras, a contradiction follows since $0 \ne \Phi([\omega]) = \Phi(c) \wedge \Phi(c') = 0$. Thus, $\Phi$ is injective if $k = m = n$.
    \end{proof}

    \appendix

    \section{Compactness of the Poincar\'e homotopy operator}\label{sect:Poincare_homotopy_compactness}
    
    In this appendix, we provide a proof of Proposition \ref{prop:homotopy_operator} \ref{item:homotopy_compactness}. We state it here in a slightly more general form.
    
    \begin{prop}\label{prop:compact_embedding}
    	Let $D \subset \R^n$ be a bounded, convex domain. Then for all $k \in \{1, \dots, n\}$ and $p, q \in [1, \infty]$ with $q^{-1} < p^{-1} + n^{-1}$, the Poincar\'e homotopy operator $T \colon L^q(\wedge^k T^* D) \to L^p(\wedge^{k-1} T^* D)$ is compact.
    \end{prop}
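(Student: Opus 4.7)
The plan is to factor the operator $T$ through an intermediate Sobolev space and apply the Rellich-Kondrachov compactness theorem. Recall that Proposition 4.1 of Iwaniec-Lutoborski gives that, for $1 < q < \infty$, the coefficient functions of $T\omega$ with respect to the standard basis of $\wedge^{k-1}(\R^n)^*$ belong to $W^{1,q}(D)$, and satisfy an estimate of the form
\[
    \sum_I \norm{\nabla T_I \omega}_{L^q(D)} \le C(n,q) \norm{\omega}_{L^q(D)}.
\]
Combined with Proposition \ref{prop:homotopy_operator}\ref{item:homotopy_continuity}, this shows that $T$ extends to a bounded linear operator from $L^q(\wedge^k T^* D)$ to the componentwise Sobolev space $W^{1,q}(\wedge^{k-1} T^* D)$ for every $q \in (1, \infty)$. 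The same boundedness holds for $q = 1$ via the $q=1$ case of Iwaniec-Lutoborski's estimates, and for $q = \infty$ by interpolation with the pointwise bounds inherited from the explicit integral formula for $T$.

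The main step would then be to factor the operator $T \colon L^q(\wedge^k T^* D) \to L^p(\wedge^{k-1} T^* D)$ as the composition
\[
    L^q(\wedge^k T^* D) \xrightarrow{T} W^{1,q}(\wedge^{k-1} T^* D) \hookrightarrow L^p(\wedge^{k-1} T^* D),
\]
where the first map is bounded and the second map is the Sobolev embedding applied componentwise. Since $D$ is bounded and convex (in particular, Lipschitz), the classical Rellich-Kondrachov theorem asserts that the componentwise embedding $W^{1,q}(\wedge^{k-1} T^* D) \hookrightarrow L^p(\wedge^{k-1} T^* D)$ is compact precisely when $q^{-1} < p^{-1} + n^{-1}$ (with the case $q \ge n$ giving compactness for all finite $p$, and the case $q > n$ giving compactness into $L^\infty$ via Morrey's inequality and Arzel\`a-Ascoli). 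Composition of a bounded operator with a compact operator is compact, which would yield the claim for $1 \le q < \infty$.

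For the boundary case $q = \infty$, the plan is to reduce to the previous case by exploiting the boundedness of $D$: since $L^\infty(\wedge^k T^* D) \hookrightarrow L^{q'}(\wedge^k T^* D)$ continuously for every $q' \in [1, \infty)$, and since the hypothesis $q^{-1} < p^{-1} + n^{-1}$ (with $q = \infty$) reduces to $p^{-1} + n^{-1} > 0$, one may pick any finite $q'$ with $1/q' < 1/p + 1/n$ (e.g.\ $q'$ sufficiently large), and factor $T$ on $L^\infty$ through the already-handled compact operator $T \colon L^{q'}(\wedge^k T^* D) \to L^p(\wedge^{k-1} T^* D)$.

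The only obstacle I foresee is checking the edge cases of the compactness range: specifically, the strictness in $q^{-1} < p^{-1} + n^{-1}$ exactly matches the strict inequality in Rellich-Kondrachov that excludes the non-compact borderline Sobolev embeddings $W^{1,q} \hookrightarrow L^{q^*}$ and $W^{1,n} \hookrightarrow L^\infty$, and one should verify that the statement obtained in this way indeed covers the specific range needed for Proposition \ref{prop:homotopy_operator}\ref{item:homotopy_compactness} (namely $q = p$, for which the strict inequality is automatic).
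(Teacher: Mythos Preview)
Your approach via Rellich--Kondrachov is exactly the one the paper acknowledges works for $1 < q < \infty$ (it even cites Bonk--Heinonen for this observation). Your reduction of the case $q = \infty$ to finite $q'$ is also fine. The genuine gap is the case $q = 1$.

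You assert that ``the same boundedness holds for $q = 1$ via the $q=1$ case of Iwaniec--Lutoborski's estimates'', but there is no such case: their Proposition~4.1 is stated only for $1 < q < \infty$, and this restriction is not cosmetic. The gradient $\nabla T\omega$ is given by an integral operator whose kernel has a singularity of order $\abs{x-z}^{-n}$; this is a Calder\'on--Zygmund singular integral, which is bounded on $L^q$ for $1 < q < \infty$ but \emph{not} on $L^1$ (one only gets weak-$L^1$). So the factorization $L^1 \to W^{1,1} \hookrightarrow L^p$ does not go through, and the $q=1$ case is precisely the one the paper needs (it arises when $p_n = 1$ in Proposition~\ref{prop:technical_abstraction}, via Lemma~\ref{lem:weak_is_exact}). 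The paper's proof circumvents this by abandoning the Sobolev-space route entirely: it works directly with the integral kernel, proving $L^p$-equicontinuity of the image of the unit ball (Lemma~\ref{lem:esssup_estimate_lemma}) and then applying Kolmogorov--Riesz--Fr\'echet. This kernel-level argument treats all $q \in [1,\infty]$ uniformly, at the cost of a more hands-on computation.
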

    
    The case $p = q$ of this result, which is the only case we require, is stated in \cite[Remark 4.1]{Iwaniec-Lutoborski}; however, we do not see how the result follows from the explanation given there. In the cases $1 < q < \infty$, as noted in \cite[p.\ 235]{Bonk-Heinonen_Acta}, one can easily derive this result using \cite[Proposition 4.1]{Iwaniec-Lutoborski} and the compactness of the embedding $W^{1,q}(D) \hookrightarrow L^p(D)$. However, at the level of generality in which we state Proposition \ref{prop:technical_abstraction}, all cases $1 \le q \le \infty$ are utilized in the proof. For this reason, we have elected to provide an exposition of the proof of Proposition \ref{prop:compact_embedding} in this appendix.
    
    \subsection{Preliminaries}
    
    We start by recalling a few preliminary results. The first one is the Kolmogorov-Riesz-Fr\'echet theorem, which describes compact sets in $L^p$-spaces; see e.g.\ \cite[Theorem 4.26 and Corollary 4.27]{Brezis_FunctionalAnalysisAndPDEs}. We state it here for $k$-forms, for which the result follows immediately by coordinate-wise application of the original result for real-valued functions. Here and in what follows, for every $h \in \R^n$, we let $\tau_h \colon L^p(\wedge^k T^*\R^n) \to L^p(\wedge^k T^*\R^n)$ denote the translation operation defined by $(\tau_h \omega)_x = \omega_{x + h}$.
    
    \begin{thm}[Kolmogorov-Riesz-Fr\'echet]\label{thm:Kolmogorov-Frechet-Riesz}
    	Let $1 \le p < \infty$ and let $K$ be a bounded subset of $L^p(\wedge^k T^* \R^n)$. Then $\overline{K}$ is compact if the two following properties hold:
    	\begin{enumerate}[label=(\roman*)]
    		\item \emph{$L^p$-equicontinuity}: for every $\eps > 0$, there exists a $\delta > 0$ such that for all $\omega \in K$, $\norm{\omega - \tau_h \omega}_{L^p(\R^n)} < \eps$ when $h \in \B^n(0, \delta)$.
    		\item \emph{$L^p$-equitightness}: for every $\eps > 0$, there exists a $r > 0$ such that for all $\omega \in K$, $\norm{\omega}_{L^p(\R^n\setminus \mathbb{B}(0, r))} < \eps$.
    	\end{enumerate}
    \end{thm}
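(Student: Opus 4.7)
The plan is to reduce the statement to the scalar-valued Kolmogorov--Riesz--Fr\'echet theorem by decomposing $k$-forms in the standard basis of $\wedge^k (\R^n)^*$. Write every $\omega \in L^p(\wedge^k T^* \R^n)$ as $\omega = \sum_I \omega_I \eps_I$, where $I$ ranges over the $\binom{n}{k}$ increasing multi-indices of length $k$, $\eps_I$ are the constant standard basis $k$-covectors, and the coefficient functions $\omega_I$ lie in $L^p(\R^n)$. Since $\wedge^k T^* \R^n$ is trivialized by this constant frame, the Grassmann norm satisfies $\smallabs{\omega_x}^2 = \sum_I \omega_I(x)^2$, and hence there exist constants $c, C > 0$ depending only on $n$ and $k$ with
\[
    c \max_I \abs{\omega_I(x)} \le \abs{\omega_x} \le C \sum_I \abs{\omega_I(x)}
\]
pointwise, from which one obtains an equivalence of $L^p$-norms on forms with the sum (or maximum) of the $L^p$-norms of the coefficient functions.

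Next, I would observe that translation commutes perfectly with this decomposition: since $\eps_I$ is a constant frame, $(\tau_h \omega)_I = \tau_h \omega_I$. Consequently the quantitative hypotheses of the theorem decouple. For a bounded family $K \subset L^p(\wedge^k T^* \R^n)$, the norm equivalence and commutation yield, for every multi-index $I$, boundedness of $K_I := \{\omega_I : \omega \in K\}$ in $L^p(\R^n)$, together with the scalar versions of $L^p$-equicontinuity and $L^p$-equitightness for $K_I$.

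Applying the classical scalar Kolmogorov--Riesz--Fr\'echet theorem (e.g.\ \cite[Theorem 4.26 and Corollary 4.27]{Brezis_FunctionalAnalysisAndPDEs}) to each $K_I$ gives that $\overline{K_I}$ is compact in $L^p(\R^n)$. Given any sequence $(\omega_j) \subset K$, I would then extract a subsequence such that each of the finitely many coordinate sequences $(\omega_{j,I})$ converges in $L^p(\R^n)$; with only $\binom{n}{k}$ multi-indices, this is a finite iterative extraction requiring no diagonal argument. The upper bound $\abs{\omega} \le C \sum_I \abs{\omega_I}$ then upgrades coordinate-wise $L^p$-convergence to convergence of the forms in $L^p(\wedge^k T^* \R^n)$, establishing that $\overline{K}$ is compact.

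No step of this is a real obstacle: the content of the theorem is entirely carried by its scalar version, and the passage to $k$-forms is essentially bookkeeping afforded by the existence of a constant global frame on the trivial bundle $\wedge^k T^* \R^n$. The only minor care needed is tracking the $n, k$-dependent constants in the norm equivalence, which is harmless since the family $K$ is fixed.
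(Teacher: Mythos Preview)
Your proposal is correct and matches the paper's approach exactly: the paper does not give a proof but simply states that the result ``follows immediately by coordinate-wise application of the original result for real-valued functions,'' which is precisely the reduction you carry out in detail.
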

    
    Next, we recall a convergence result for $L^p$-bounded sequences that tend pointwise to zero.
    
    \begin{lemma}\label{lem:pseudo_dominated_conv}
    	Let $\Omega \subset \R^n$ be a bounded open set, let $1 \le q < p \le \infty$, and let $f_j \colon \Omega \to \R$ be a sequence of measurable functions satisfying $\lim_{j \to \infty} f_j(x) = 0$ for a.e.\ $x \in \Omega$, and
    	\[
    		\limsup_{j \to \infty} \norm{f_j}_{L^p(\Omega)} = C < \infty.
    	\]
    	Then
    	\[
    		\lim_{j \to \infty} \norm{f_j}_{L^q(\Omega)} = 0.
    	\]
    \end{lemma}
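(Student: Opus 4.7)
The plan is to use Egorov's theorem together with Hölder's inequality to reduce the $L^q$-convergence to a uniform convergence estimate on a large set plus a measure-smallness estimate on the remainder. Since $\Omega$ is bounded and the $f_j$ converge pointwise a.e.\ to zero, Egorov's theorem applies: for every $\delta > 0$, there exists a measurable set $E_\delta \subset \Omega$ with $\lvert \Omega \setminus E_\delta \rvert < \delta$ on which $f_j \to 0$ uniformly.

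Fix $\eps > 0$. First I would split
\[
    \norm{f_j}_{L^q(\Omega)}
    \le \norm{f_j}_{L^q(E_\delta)} + \norm{f_j}_{L^q(\Omega \setminus E_\delta)}.
\]
For the first term, uniform convergence on $E_\delta$ together with $\lvert E_\delta \rvert \le \lvert \Omega \rvert < \infty$ implies $\norm{f_j}_{L^q(E_\delta)} \to 0$ as $j \to \infty$, so this term is below $\eps/2$ for all sufficiently large $j$.

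For the second term, I would apply Hölder's inequality with exponents $p/q$ and $(p/q)^*$ when $p < \infty$, using that $q < p$ ensures $p/q > 1$. This yields
\[
    \norm{f_j}_{L^q(\Omega \setminus E_\delta)}
    \le \lvert \Omega \setminus E_\delta \rvert^{1/q - 1/p}
        \norm{f_j}_{L^p(\Omega \setminus E_\delta)}
    \le \delta^{1/q - 1/p} \norm{f_j}_{L^p(\Omega)}.
\]
When $p = \infty$, the analogous estimate $\norm{f_j}_{L^q(\Omega \setminus E_\delta)} \le \delta^{1/q} \norm{f_j}_{L^\infty(\Omega)}$ holds directly. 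Since $\limsup_j \norm{f_j}_{L^p(\Omega)} = C$, the term $\norm{f_j}_{L^p(\Omega)}$ is bounded by $C+1$ for large $j$, so the second term is at most $\delta^{1/q - 1/p}(C+1)$ eventually (with the convention $1/p = 0$ when $p = \infty$).

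The main (very minor) obstacle is only bookkeeping the $p = \infty$ case separately, since Hölder's inequality there degenerates to a direct estimate. Choosing $\delta > 0$ small enough that $\delta^{1/q - 1/p}(C+1) < \eps/2$ and then taking $j$ large, we conclude $\norm{f_j}_{L^q(\Omega)} < \eps$, which proves the claim since $\eps > 0$ was arbitrary.
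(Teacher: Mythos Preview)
Your proof is correct and follows essentially the same approach as the paper: both use Egorov's theorem to obtain uniform convergence off a small set, then H\"older's inequality with exponent $p/q$ on the small set together with the $L^p$-bound. The only cosmetic difference is that the paper phrases the argument via $\limsup_{j}\norm{f_j}_{L^q(\Omega)} \le C\eps^{1/q-1/p}$ and then lets $\eps \to 0$, whereas you fix $\eps$ and choose $\delta$ accordingly; the content is identical.
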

    \begin{proof}
    	Let $\eps > 0$. By Egorov's theorem, there exists an $A \subset \Omega$ with measure $\abs{A} < \eps$ and $\lim_{j \to \infty} f_j(x) = 0$ uniformly on $\Omega \setminus A$. Thus, by H\"older's inequality,
    	\begin{multline*}
    		\limsup_{j \to 0} \norm{f_j}_{L^q(\Omega)} 
    		\le \limsup_{j \to 0} \norm{f_j}_{L^q(\Omega \setminus A)} + \limsup_{j \to 0} \norm{f_j}_{L^q(A)}\\
    		= \limsup_{j \to 0} \norm{f_j}_{L^q(A)}
    		\le \abs{A}^{\frac{1}{q} - \frac{1}{p}} \limsup_{j \to 0} \norm{f_j}_{L^p(X)}
    		\le C \eps^{\frac{1}{q} - \frac{1}{p}} .
    	\end{multline*}
    	Now, using our assumption $p > q$, the claim follows by letting $\eps \to 0$.
    \end{proof}
    
    Following this, we recall an integral estimate obtained by applying the usual proof of Young's convolution inequality on a non-convolution kernel.
    
    \begin{lemma}\label{lem:pseudo_young}
    	Let $\Omega \subset \R^n$ be measurable, let $p, q, \gamma \in [1, \infty]$ be such that $q^{-1} + \gamma^{-1} = p^{-1} + 1$, and let $\phi \colon \Omega \to [-\infty, \infty]$ and $\psi \colon \Omega \times \R^n \to [-\infty, \infty]$ be measurable. We denote $\psi^x(z) = \psi_z(x) = \psi(z, x)$, and define a function $\Phi \colon \R^n \to (-\infty, \infty]$ by setting
    	\[
    	\Phi(x) = \int_\Omega \phi(z) \psi(z, x) \, \dd z
    	\]
    	whenever $x \in \R^n$ is such that $\phi \cdot \psi^x \in L^1(\Omega)$, with $\Phi(x) = \infty$ otherwise. Then
    	\[
    	\norm{\Phi}_{L^p(\R^n)} 
    	\le \left(\esssup_{x \in \R^n} \norm{\psi^x}_{L^\gamma(\Omega)}\right)^{1 - \frac{\gamma}{p}}
    	\left(\esssup_{z \in \Omega} \norm{\psi_z}_{L^\gamma(\R^n)}\right)^\frac{\gamma}{p}
    	\norm{\phi}_{L^q(\Omega)},
    	\]
    	and if $p = \infty$, then we also have
    	\[
    		\sup_{x \in \R^n} \abs{\Phi(x)} \le \left(\sup_{x\in \R^n} \norm{\psi^x}_{L^\gamma(\Omega)}\right) \norm{\phi}_{L^q(\Omega)}.
    	\]
    \end{lemma}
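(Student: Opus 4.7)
The plan is to adapt the classical proof of Young's convolution inequality, which uses H\"older's inequality with three exponents, to this non-convolution kernel setting. The arithmetic condition $q^{-1} + \gamma^{-1} = p^{-1} + 1$ is exactly what makes the standard three-exponent decomposition close up, and the two essential-supremum hypotheses step in to replace the translation invariance of the $L^\gamma$-norm that the convolution case would provide.

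First, I would dispose of the $p = \infty$ case directly: the constraint forces $\gamma = q^*$, so a single application of H\"older's inequality in $z$ gives $\abs{\Phi(x)} \le \norm{\phi}_{L^q(\Omega)} \norm{\psi^x}_{L^\gamma(\Omega)}$ pointwise in $x$, and taking the supremum yields the second inequality of the lemma.

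For $1 \le p < \infty$, I may assume $\phi \ge 0$ and $\psi \ge 0$. The key step is the decomposition
\[
    \phi(z)\psi(z,x)
    = \bigl[\phi(z)^q \psi(z,x)^\gamma\bigr]^{1/p}
        \cdot \phi(z)^{1 - q/p} \cdot \psi(z,x)^{1 - \gamma/p},
\]
where $q \le p$ and $\gamma \le p$ follow from $p^{-1} + 1 = q^{-1} + \gamma^{-1} \ge 2 p^{-1}$, making the exponents non-negative. Applying H\"older's inequality in $z \in \Omega$ with the three exponents $p$, $b = pq/(p-q)$, $c = p\gamma/(p-\gamma)$, which satisfy $a^{-1} + b^{-1} + c^{-1} = 1$ precisely by the arithmetic hypothesis, gives
\[
    \Phi(x)
    \le \Bigl(\int_\Omega \phi^q \psi(\cdot,x)^\gamma\, \dd z\Bigr)^{1/p}
        \norm{\phi}_{L^q(\Omega)}^{1 - q/p}
        \norm{\psi^x}_{L^\gamma(\Omega)}^{1 - \gamma/p}.
\]
Raising to the $p$-th power, integrating in $x$, and applying Fubini, the prefactor $\norm{\psi^x}_{L^\gamma(\Omega)}^{p - \gamma}$ is bounded outside the integral by $(\esssup_x \norm{\psi^x}_{L^\gamma(\Omega)})^{p - \gamma}$, while the inner $x$-integral $\int \psi(z,x)^\gamma\, \dd x$ is bounded by $(\esssup_z \norm{\psi_z}_{L^\gamma(\R^n)})^\gamma$. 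Collecting the two remaining $\phi^q$ factors into $\norm{\phi}_{L^q(\Omega)}^p$ and extracting the $p$-th root yields exactly the claimed inequality.

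There is no conceptual obstacle; the only real care required is in the boundary exponents. If $q = p$ then $\gamma = 1$ and $b = \infty$, so the $\phi^{1 - q/p}$ factor collapses to $1$; symmetrically, if $\gamma = p$ then $q = 1$, $c = \infty$, and the $\norm{\psi^x}_{L^\gamma(\Omega)}$ factor has exponent $0$. In each of these degenerate cases the three-exponent H\"older estimate reduces to a two-exponent one (or to Minkowski's integral inequality), which produces the same bound directly. The case $p = 1$ forces $q = \gamma = 1$ and is immediate from Fubini.
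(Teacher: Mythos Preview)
Your proof is correct and is essentially identical to the paper's: both do the $p=\infty$ case by a single H\"older, and for $p<\infty$ both use the three-term H\"older decomposition $\phi\psi = (\phi^q\psi^\gamma)^{1/p}\cdot\phi^{1-q/p}\cdot\psi^{1-\gamma/p}$ followed by Fubini. Your exponents $b = pq/(p-q)$ and $c = p\gamma/(p-\gamma)$ are exactly the paper's $\gamma^*$ and $q^*$, so the computations coincide; your explicit discussion of the degenerate endpoint cases is a small addition but not a different approach.
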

    \begin{proof}
    	We first cover the case $p = \infty$, which follows with just a simple use of H\"older's inequality, yielding
    	\[
    		\abs{\Phi(x)} \le \int_\Omega \abs{\phi} \abs{\psi^x} \, dz
    		\le \norm{\psi^x}_{L^\gamma(\Omega)} \norm{\phi}_{L^q(\Omega)},
    	\]
    	for all $x \in \R^n$. Next, suppose $p < \infty$, in which case also $q < \infty$ and $\gamma < \infty$ due to $q^{-1} + \gamma^{-1} = 1 + p^{-1} > 1$. We denote the corresponding H\"older conjugates of $p, q, \gamma$ by $p^*, q^*, \gamma^*$, and note that $q/p + q/\gamma^* = 1$, $\gamma/p + \gamma/q^* = 1$, and $1/q^* + 1/\gamma^* + 1/p = 1$. 
    	
    	We then have via a 3-term H\"older's inequality that
    	\begin{multline*}
    		\abs{\Phi(x)}^p \le \left(\int_\Omega \abs{\phi} \abs{\psi^x} \right)^p
    		= \left(\int_\Omega \abs{\phi}^{\frac{q}{p}} \abs{\psi^x}^{\frac{\gamma}{p}} 
    			\cdot \abs{\phi}^{\frac{q}{\gamma^*}} \cdot \abs{\psi^x}^{\frac{\gamma}{q^*}}\right)^p\\
    		\le \norm{\psi^x}_{L^{\gamma}(\Omega)}^{p\gamma/q^*} \norm{\phi}_{L^q(\Omega)}^{pq/\gamma^*} \int_\Omega \abs{\phi}^q \abs{\psi^x}^\gamma.
    	\end{multline*}
    	Thus,
    	\begin{multline*}
    	\norm{\Phi}_{L^p(\R^n)}\\
    	\le \norm{\phi}_{L^q(\Omega)}^{q/\gamma^*} \left(\esssup_{x \in \R^n} \norm{\psi^x}_{L^\gamma(\Omega)}\right)^{\frac{\gamma}{q^*}}
    		\left( \int_{\R^n} \int_\Omega \abs{\phi(z)}^q\abs{\psi^x(z)}^\gamma \, dz \, dx\right)^{\frac{1}{p}}.
    	\end{multline*}
    	Fubini's theorem then yields
    	\begin{multline*}
    		\left( \int_{\R^n} \int_\Omega \abs{\phi(z)}^q\abs{\psi^x(z)}^\gamma \, dz \, dx\right)^{\frac{1}{p}}
    		= \left( \int_\Omega \abs{\phi(z)}^q \norm{\psi_z}_{L^\gamma(\R^n)}^{\gamma} \, dz \right)^{\frac{1}{p}}\\
    		\le \left(\esssup_{z \in \Omega} \norm{\psi_z}_{L^\gamma(\R^n)}\right)^\frac{\gamma}{p} \norm{\phi}_{L^q(\Omega)}^{q/p},
    	\end{multline*}
    	and the claim follows.
    \end{proof}
    
    We also recall and fix notation for the interior product of $k$-covectors on $\R^n$. Namely, if $\alpha \in \wedge^k (\R^n)^*$ is a $k$-covector with $k \ge 1$, and $v \in \R^n$ is a vector, then we define the \emph{interior product} $\alpha \iprodl v \in \wedge^{k-1} (\R^n)^*$ by
    \[
    (\alpha \iprodl v)(w_1, \dots, w_{k-1}) = \alpha(v, w_1, \dots, w_{k-1})
    \]
    for all $w_1, \dots, w_{k-1} \in \R^n$.
    
    \subsection{Definition of the Poincar\'e homotopy operator}
    
    Although we call $T \colon L^q(\wedge^k T^* D) \to L^p(\wedge^{k-1} T^* D)$ the Poincar\'e homotopy operator, this is in fact a slight abuse of terminology, as the construction involves a choice which affects the values of the operator. Namely, in the definition, one fixes a non-negative smoothing kernel $\varphi \in C^\infty_0(D)$ with unit integral over $D$. The operator is then given for $\omega \in C^\infty(\wedge^k T^* D)$ by
    \[
    	T \omega = \int_D \varphi(y) K_y\omega \, \dd y,
    \]
    where for every $y \in D$, the operator $K_y \colon C^\infty(\wedge^k T^* D) \to C^{\infty}(\wedge^{k-1} T^* D)$ is given by
    \[
    	(K_y\omega)_x = \int_0^1 t^{k-1} (\omega_{tx + (1-t)y} \iprodl (x-y)) \, \dd t 
    \]
    for all $x \in D$.
    
    In \cite[(4.7)-(4.8)]{Iwaniec-Lutoborski}, it is shown that for every $\omega \in C^\infty(\wedge^k T^* D)$, $T \omega$ can be re-written in the form
    \begin{equation}\label{eq:Poincare_homotopy_structure}
    	(T\omega)_x = \int_D \omega_z \iprodl \zeta(z, x-z) \, \dd z
    \end{equation}
    for all $x \in D$, where the function $\zeta \colon \R^n \times (\R^n \setminus \{0\}) \to \R^n$ is given by
    \[
    	\zeta(z, v) = \left( \sum_{m=k}^n \binom{n-k}{m-k} \abs{v}^{n-m} \int_0^\infty s^{m-1} \varphi\left( z - s \frac{v}{\abs{v}} \right) \, \dd s \right) \frac{v}{\abs{v}^n}.
    \]
    In particular, the function $\zeta$ is of the form
    \begin{equation}\label{eq:zeta_decomp}
    	\zeta(z, v) = g_\zeta(z, v) \frac{v}{\abs{v}^n},
    \end{equation}
    where $g_\zeta \colon \R^n \times \R^n \to [0, \infty)$ is locally bounded in $\R^n \times \R^n$ and continuous in $\R^n \times (\R^n \setminus \{0\})$. 
    
    One may now use \eqref{eq:Poincare_homotopy_structure} as the basis for defining $T\omega$ for non-smooth $\omega$. In particular, if $\omega \in L^1(\wedge^k T^* D)$, then we have a point-wise estimate
    \begin{equation}\label{eq:Poincare_homotopy_pw_est}
    	\abs{\omega_z \iprodl \zeta(z, x-z)} \le C(n,k,D,\varphi) \frac{\abs{\omega_z}}{\abs{x-z}^{n-1}}
    \end{equation}
    for a.e.\ $x, z \in D$. A standard use of Young's inequality then yields that $T$ is a bounded operator from $L^q(\wedge^k T^* D)$ to $L^p(\wedge^{k-1} T^* D)$ when $p,q \in [1, \infty]$ with $q^{-1} < p^{-1} + n^{-1}$.
    
    \subsection{Proof of Proposition \ref{prop:compact_embedding}}
    
    Let $p,q \in [1, \infty]$ with $q^{-1} < p^{-1} + n^{-1}$. For now, we also suppose that $p \ge q$. Because the Kolmogorov-Riesz-Fr\'echet theorem is formulated for all of $\R^n$, we define an extension of $T$ to an operator $\tilde{T} \colon L^q(\wedge^k T^* D) \to L^{p}(\wedge^{k-1} T^* \R^n)$. For this, we fix a ball $B = \B^n(0, r)$ with $\overline{D} \subset B$, fix $\eta \in C_0(\R^n, [0, 1])$ with $\eta \equiv 1$ on $2B$, and $\spt \eta \subset 3B$, and define
    \begin{equation}\label{eq:xi_def}
    	\xi \colon \overline{D} \times (\R^n \setminus \{0\}) \to \R^n, \quad \xi(z, v) = \eta(v) \zeta(z, v).
    \end{equation}
    We then define
    \[
    (\tilde{T}\omega)_x = \int_D \omega_z \iprodl \xi(z, x-z) \, \dd z
    \]
    for all $x \in \R^n$. It follows that $(\tilde{T}\omega)_x = (T\omega)_x$ for all $x \in D$, since if $x, z \in D$, then $\abs{x - z} < 2r$, and consequently $\eta(x-z) = 1$. 
    
    Moreover, by \eqref{eq:zeta_decomp}, we now can express $\xi$ as
    \begin{equation}\label{eq:xi_decomp}
    	\xi(z, v) = g_\xi(z, v) \frac{v}{\abs{v}^n},
    \end{equation}
    where $g_\xi \colon \overline{D} \times \R^n \to [0,\infty)$ is continuous outside $\overline{D} \times \{0\}$, bounded, and supported in $\overline{D} \times 3B$. Notably, we conclude that
    \begin{equation}\label{eq:xi_pw_est}
    	\abs{\xi(z, x-z)} \le \norm{g_\xi}_{L^\infty(D\times\R^n)} \frac{\cX_{4B}(x)}{\abs{x-z}^{n-1}},
    \end{equation}
    for all $(z, x) \in D \times \R^n$, where $\cX_{A}$ denotes the characteristic function of a set $A$.
    
    We can in fact slightly adjust the decomposition \eqref{eq:xi_decomp} to make it better suited for our uses by moving a small power of $\abs{v}$ into $g_\xi$, which eliminates the discontinuity of $g_\xi$ at $\overline{D} \times \{0\}$ due to the boundedness of $g_\xi$. That is, for every $s > n-1$, we can write
    \begin{equation}\label{eq:xi_decomp_eps}
    	\xi(z, v) = g_{\xi, s}(z, v) \frac{v}{\abs{v}^{s+1}},
    \end{equation}
    where $g_{\xi, s} \colon \overline{D} \times \R^n \to [0,\infty)$ is continuous and supported in $\overline{D} \times 3B$. Consequently, $g_{\xi, s}$ is uniformly continuous.

    By \eqref{eq:xi_pw_est}, the assumption $p \ge q$, and Lemma \ref{lem:pseudo_young}, we get that $\tilde{T}$ is a bounded operator from $L^q(\wedge^k T^* D)$ into $L^p(\wedge^k T^* \R^n)$. However, the objective that requires more care is estimating $\smallnorm{\tilde{T}(\omega) - \tau_{h} \tilde{T}(\omega)}_{L^p(\R^n)}$, which will then yield compactness via Kolmogorov-Riesz-Fr\'echet for $1 \le p < \infty$ and Arzela-Ascoli for $p = \infty$. For this, we observe that
    \[
    \bigl(\tilde{T}\omega - \tau_h \tilde{T}\omega\bigr)_x = 
    \int_{D} \omega_z \iprodl (\xi(z, x-z) - \xi(z, x+h-z)) \, \dd z
    \]
    for all $x, h \in \R^n$. For convenience, we denote 
    \[
    	\Delta_h \xi_{z}(x) = \Delta_h \xi^x(z) = \xi(z, x-z) - \xi(z, x + h - z).
    \] 
    Thus, given the statement of Lemma \ref{lem:pseudo_young}, the missing piece we require is the following result on the limit behavior of $\Delta_h \xi_{z}$ and $\Delta_h \xi^x$.
    
    \begin{lemma}\label{lem:esssup_estimate_lemma}
    	Let $D \subset \R^n$ be a bounded, convex domain and let $\xi \colon \overline{D} \times \R^n$ be given by $\xi(\cdot,0)=0$ and \eqref{eq:xi_def}, with $k$, $\varphi$, $\eta$, $B$ fixed as described above. Then, for all $1 \le \gamma < n/(n-1)$, we have
    	\begin{align*}
    		&\lim_{h \to 0} \sup_{x \in \R^n} \norm{\Delta_h \xi^x}_{L^\gamma(D)} = 0 \qquad \text{and}\\
    		&\lim_{h \to 0} \sup_{z \in D} \norm{\Delta_h \xi_{z}}_{L^\gamma(\R^n)} = 0.
    	\end{align*}
    \end{lemma}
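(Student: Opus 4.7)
The strategy is to split both $L^\gamma$-integrals at the scale of the singularity of $\xi$ at $v = 0$, estimating the near-singularity part via the pointwise bound \eqref{eq:xi_pw_est} and the far-from-singularity part via uniform continuity of $\xi$ on a compact set. A change of variables $w = x - z$ in the second estimate rewrites it as
\[
    \norm{\Delta_h \xi_z}_{L^\gamma(\R^n)}^\gamma = \int_{\R^n} \abs{\xi(z,w) - \xi(z, w+h)}^\gamma \, \dd w,
\]
and since $\xi(z, v)$ vanishes whenever $|v| \ge 3r$, the corresponding integrand in the first estimate has support in a bounded region of $z$ of measure independent of $x$. Hence both estimates reduce to bounding an $L^\gamma$-modulus of continuity of $\xi$ in its second variable, uniformly in the frozen parameter.

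Next I fix $\delta \in (0, r)$ and $|h| < \delta$, and split the integration region at $|v| = 2\delta$. On the near-singularity part $\{|v| < 2\delta\}$, the estimate \eqref{eq:xi_pw_est} together with the triangle inequality and a translation of the integration variable yield
\[
    \int_{|v| < 2\delta} \abs{\xi(z,v) - \xi(z, v+h)}^\gamma \, \dd v
    \le C \int_{|w| < 3\delta} \abs{w}^{-\gamma(n-1)} \, \dd w
    = C \delta^{n - \gamma(n-1)},
\]
which tends to $0$ as $\delta \to 0$ precisely because $\gamma < n/(n-1)$, with a bound that is uniform in $z$ and $h$. On the complement $\{|v| \ge 2\delta\}$, the continuity of $\xi$ on $\overline{D} \times (\R^n \setminus \{0\})$ together with the vanishing of $\xi$ on $\{|v| \ge 3r\}$ implies uniform continuity on the compact set $\overline{D} \times \{\delta \le |v| \le 4r\}$; thus for $|h|$ sufficiently small one has $\abs{\xi(z,v) - \xi(z,v+h)} < \eps$ uniformly in $z \in D$ and $v$ with $|v| \ge 2\delta$, which bounds the far-from-singularity contribution by $C(r, \abs{D}) \eps^\gamma$.

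The conclusion will then follow by first choosing $\delta$ small enough to control the near-singularity term, and then $|h|$ small enough (depending on $\delta$) to control the far-from-singularity term. The only delicate point is ensuring that both bounds are uniform in the frozen parameter (either $x$ or $z$), but this is built into the construction: the near-singularity estimate uses only translation invariance of Lebesgue measure, and uniform continuity on $\overline{D} \times \{\delta \le |v| \le 4r\}$ is independent of $z$ by compactness. I expect no substantive obstacle in this proof beyond this bookkeeping, as the result is essentially a weighted modulus-of-continuity statement for the explicit kernel $\xi$.
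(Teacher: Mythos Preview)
Your argument is correct and in fact more elementary than the paper's. The paper proceeds by an \emph{algebraic} splitting: it writes $\xi(z,v) = g_{\xi,s}(z,v)\,v/\abs{v}^{s+1}$ for a carefully chosen $s \in (n-1, n/\gamma)$ so that $g_{\xi,s}$ is uniformly continuous on $\overline{D}\times\R^n$, and then bounds $\abs{\xi(z,v)-\xi(z,v+h)}^\gamma$ by a sum of two product terms, one carrying the difference of the singular factor $\Phi_s(v)=v/\abs{v}^{s+1}$ and the other the difference of $g_{\xi,s}$. The first is handled via an Egorov-type interpolation lemma (Lemma~\ref{lem:pseudo_dominated_conv}) exploiting $\Phi_s \in L^\delta$ for some $\delta>\gamma$; the second via uniform continuity of $g_{\xi,s}$. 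Your approach instead splits the \emph{domain} geometrically at scale $\abs{v}\sim 2\delta$, using only the raw pointwise bound \eqref{eq:xi_pw_est} near the singularity and uniform continuity of $\xi$ itself on the compact annular set $\overline{D}\times\{\delta\le\abs{v}\le 4r\}$ away from it. This avoids the auxiliary exponent $s$ and Lemma~\ref{lem:pseudo_dominated_conv} altogether, at the cost of the usual two-step $\eps$--$\delta$ choice; the paper's route is slightly more structural but no shorter. One small point worth making explicit when you write this up: in the first estimate the integration variable $z$ also sits in the first slot of $\xi$, so the ``frozen parameter'' phrasing is a mild abuse---but since the difference $\xi(z,x-z)-\xi(z,x+h-z)$ only moves the second argument, uniform continuity on the full compact set $\overline{D}\times\{\delta\le\abs{v}\le 4r\}$ still delivers exactly what you need.
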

    \begin{proof}
    	Suppose that $h \in B$. Note that if $z \in D$, then $\Delta_h \xi_{z}(x) \ne 0$ only if $x \in 5B$. We fix $s \in (n-1, n/\gamma)$, noting that this interval is nonempty since $\gamma < n/(n-1)$, and apply decomposition \eqref{eq:xi_decomp_eps} with that $s$ to estimate that
    	\begin{multline*}
    		\abs{\xi(z, x-z) - \xi(z, x + h - z)}^\gamma \\
    		\le 2^\gamma \abs{g_{\xi, s}(z, x-z)}^\gamma  
    		\abs{\frac{x-z}{\abs{x-z}^{s+1}} - \frac{x+h-z}{\abs{x+h-z}^{s+1}}}^\gamma \\
    		+ \frac{2^\gamma}{\abs{x+h-z}^{s\gamma}} \abs{g_{\xi, s}(z, x-z) - g_{\xi, s}(z, x+h-z)}^\gamma
    	\end{multline*}
        for all $x\in 5B$ and $z\in D$. For convenience, we denote by $\Phi_s \colon \R^n \to \R^n$ the map given by $\Phi_s(0) = 0$ and $\Phi_s(v) = v/\abs{v}^{s+1}$ for $v \in \R^n \setminus \{0\}$. Then
    	\begin{multline}\label{eq:comp_1_est_dz}
    		\int_D \abs{g_{\xi, s}(z, x-z)}^\gamma 
    		\abs{\frac{x-z}{\abs{x-z}^{s+1}} - \frac{x+h-z}{\abs{x+h-z}^{s+1}}}^\gamma \, \dd z\\
    		\le \norm{g_{\xi, s}}_{L^\infty(\overline{D} \times \R^n)}^\gamma \norm{\Phi_s - \tau_h \Phi_s}^\gamma_{L^\gamma(6B)}
    	\end{multline}
    	for all $x \in 5B$, and similarly
    	\begin{multline}\label{eq:comp_1_est_dx}
    		\int_{5B} \abs{g_{\xi, s}(z, x-z)}^\gamma
    		\abs{\frac{x-z}{\abs{x-z}^{s+1}} - \frac{x+h-z}{\abs{x+h-z}^{s+1}}}^\gamma \, \dd x\\
    		\le \norm{g_{\xi, s}}^\gamma_{L^\infty(\overline{D} \times \R^n)} \norm{\Phi_s - \tau_h \Phi_s}^\gamma_{L^\gamma(6B)} 
    	\end{multline}
    	for all $z \in B$. Now, since $s\gamma < n$, there exists a $\delta > \gamma$ with $s\delta < n$. Hence, $\Phi_s \in L^\delta(7B)$, and consequently $\norm{\Phi_s - \tau_h \Phi_s}_{L^\delta(6B)} \le 2 \norm{\Phi_s}_{L^\delta(7B)}$ is uniformly bounded in $h$ since $\abs{h} < r$. Thus, we may use Lemma \ref{lem:pseudo_dominated_conv} on arbitrary sequences of $h_j \in B$ tending to 0 to conclude that
    	\begin{equation}\label{eq:comp_1_unif_est}
    		\norm{\Phi_s - \tau_h \Phi_s}_{L^\gamma(6B)} \xrightarrow[h \to 0]{} 0.
    	\end{equation}
    	
    	We then recall that $g_{\xi, s}$ is uniformly continuous. Thus, there exists a function $\Psi_s \colon \R^n \to [0, \infty]$ such that for all $x \in \R^n$ and $z \in D$, 
    	\begin{equation}\label{eq:comp_2_unif_est}
    		\abs{g_{\xi, s}(z, x-z) - g_{\xi, s}(z, x+h-z)} \le \Psi_s(h) \xrightarrow[h \to 0]{} 0.
    	\end{equation}
    	Thus, we may estimate
    	\begin{multline}\label{eq:comp_2_est_dz}
    		\int_{D} \frac{1}{\abs{x+h-z}^{s\gamma}} 
    		\abs{g_{\xi, s}(z, x-z) - g_{\xi, s}(z, x+h-z)}^\gamma \, \dd z\\
    		\le  \norm{\Phi_s}^\gamma_{L^\gamma(7B)} \Psi_s^\gamma (h)
    	\end{multline}
    	for all $x \in 5B$, and similarly
    	\begin{multline}\label{eq:comp_2_est_dx}
    		\int_{5B} \frac{1}{\abs{x+h-z}^{s\gamma}} 
    		\abs{g_{\xi, s}(z, x-z) - g_{\xi, s}(z, x+h-z)}^\gamma  \, \dd x\\
    		\le \norm{\Phi_s}^\gamma_{L^\gamma(7B)} \Psi_s^\gamma(h)
    	\end{multline}
    	for all $z \in D$.
    	Now, $\lim_{h \to 0} \sup_{x \in \R^n} \norm{\Delta_h \xi^x}_{L^\gamma(D)} = 0$ by \eqref{eq:comp_1_est_dz}, \eqref{eq:comp_1_unif_est}, \eqref{eq:comp_2_unif_est}, and \eqref{eq:comp_2_est_dz}. Similarly, $\lim_{h \to 0} \sup_{z \in D} \norm{\Delta_h \xi_{z}}_{L^\gamma(\R^n)} = 0$ by \eqref{eq:comp_1_est_dx}, \eqref{eq:comp_1_unif_est}, \eqref{eq:comp_2_unif_est}, and \eqref{eq:comp_2_est_dx}.
    \end{proof}
    
    We finally have the required ingredients which imply Proposition \ref{prop:compact_embedding}. 
    \begin{proof}[Proof of Proposition \ref{prop:compact_embedding}]
    	We note that for every $q \in [1, \infty]$, $q^{-1} < p^{-1} + n^{-1}$ is valid with $p = q$. Moreover, if $T \colon L^q(\wedge^k T^* D) \to L^p(\wedge^{k-1} T^* D)$ is compact, then $T \colon L^{q}(\wedge^k T^* D) \to L^{p'}(\wedge^{k-1} T^* D)$ is compact for all $p' \in [1, p]$ due to the boundedness of $D$ and H\"older's inequality. Thus, we may assume that $p \ge q$. We define $\gamma$ by $\gamma^{-1} = p^{-1} + 1 - q^{-1}$, noting that $p \ge q$ and $q^{-1} < p^{-1} + n^{-1}$ imply that $1 \le \gamma < n/(n-1)$.
    	
    	Let $\omega \in L^q(\wedge^{k} T^* D)$ with $\norm{\omega}_{L^q(D)} \le 1$. Suppose first that $p = \infty$. Then since the integrals of $1/\abs{x}^{\gamma(n-1)}$ over balls of a fixed size are uniformly bounded, \eqref{eq:xi_pw_est} and Lemma \ref{lem:pseudo_young} yield a uniform bound for $\smallabs{\tilde{T}\omega}$. Similarly, by Lemmas \ref{lem:pseudo_young} and \ref{lem:esssup_estimate_lemma}, we get a uniform equicontinuity estimate for $\tilde{T}\omega$. Moreover, all $\tilde{T} \omega$ are supported in $5B$. By Arzela-Ascoli, it follows that the image of the unit ball of $L^q(\wedge^{k} T^* D)$ under $\tilde{T}$ is precompact. Since restricting functions in $L^{p}(\wedge^{k-1} T^* \R^n)$ to $L^{p}(\wedge^{k-1} T^* D)$ preserves convergence of sequences, it also follows that the image of the unit ball of $L^q(\wedge^{k} T^* D)$ under $T$ is precompact, completing the proof of the case $p = \infty$.
    	
    	Suppose then that $1 \le p < \infty$. Similarly as above, by \eqref{eq:xi_pw_est}, Lemma \ref{lem:pseudo_young}, and the uniform bound on the integrals of $1/\abs{x}^{\gamma(n-1)}$ over balls of a fixed size, $\tilde{T}$ is a bounded operator from $L^q(\wedge^k T^* D)$ to $L^p(\wedge^{k-1} T^* \R^n)$. By Lemmas \ref{lem:pseudo_young} and \ref{lem:esssup_estimate_lemma}, we get an $L^p$-equicontinuity estimate for $\tilde{T}\omega$. Moreover, since all $\tilde{T} \omega$ are supported in $5B$, the family of all such $\tilde{T}\omega$ is trivially $L^p$-equitight. Theorem \ref{thm:Kolmogorov-Frechet-Riesz} then yields that the image of the unit ball of $L^q(\wedge^{k} T^* B)$ under $\tilde{T}$ is $L^p$-precompact. It follows similarly as before by restricting elements of $L^p(\wedge^{k-1} T^* \R^n)$ to $L^p(\wedge^{k-1} T^* D)$ that $T$ is a compact operator.
    \end{proof}
	
    
	\bibliographystyle{abbrv}
	\bibliography{sources}
	
\end{document}